\documentclass[12pt, leqno]{article}
\usepackage{hyperref}
\usepackage{fullpage}
\usepackage{palatino}
\usepackage{amsmath,amssymb,latexsym,amsthm}
\usepackage{amsmath,amssymb}
\usepackage{graphicx}
\usepackage{color, epsfig}
\usepackage{subfigure}
\usepackage{mathrsfs}
\usepackage{cleveref}
\numberwithin{equation}{section} %
\newcommand{\Epi}{\operatorname{Epi}}
\newcommand{\Hypo}{\operatorname{Hypo}}
\newcommand{\Lip}{\operatorname{Lip}}
\providecommand{\R}{\mathbb{R}}

\providecommand{\N}{\mathbb{N}}

\DeclareMathOperator*{\OOO}{\overline{\Omega}}

\DeclareMathOperator*{\eps}{\varepsilon}

\def\la{ \lambda }

 \newcommand{\be}{\begin{equation}}
 \newcommand{\ee}{\end{equation}}
\def\for{\hskip0.9pt|\hskip0.9pt}

 \def\al{\alpha}
\def\traj{X^x_\al}
\def\val{V_\ld}
\def\ld{\lambda}
\def\vall{\ld\val}
\def\vl{v_\la}
\def\wl{w_\ld}

\def\rn{{\mathbb{R}}^n}
\def\bO{\overline{\Omega}}

\newtheorem{theorem}{Theorem}[section]
\newtheorem{proposition}[theorem]{Proposition}
\newtheorem{lemma}[theorem]{Lemma}
\newtheorem{corollary}[theorem]{Corollary}

\theoremstyle{definition}
\newtheorem{definition}[theorem]{Definition}
\theoremstyle{remark}
\newtheorem{remark}[theorem]{Remark}
\newtheorem{remarks}[theorem]{Remarks}
\newtheorem{example}[theorem]{Example}
\newcommand{\beq}{\begin{equation}}

\renewcommand{\geq}{\geqslant}
\renewcommand{\leq}{\leqslant}

\newcommand{\cont}{\mathcal{C}}
\newcommand{\bounded}{\mathcal{B}}

\newcommand{\olim}{\operatorname{o-lim}}
\def\<#1,#2>{\langle#1,#2\rangle}
\providecommand{\R}{\mathbb{R}}

\providecommand{\N}{\mathbb{N}}

\newcommand{\discount}{\alpha}
\theoremstyle{definition}
\theoremstyle{remark}
\usepackage[colorinlistoftodos,bordercolor=orange,backgroundcolor=orange!20,linecolor=orange,textsize=scriptsize]{todonotes}

\title{\bf Analysis of the vanishing discount limit \\for optimal control problems\\ in continuous  and discrete time}

\author{ P. Cannarsa \thanks{Dipartimento di Matematica, Universit\`a di Roma 'Tor Vergata', Via della Ricerca Scientifica 1, I-00133 Roma, Italy (cannarsa@mat.uniroma2.it)}\and  S. Gaubert\thanks{INRIA and CMAP, \'Ecole polytechnique, IP Paris, CNRS, UMR 7641, Route de Saclay, 91128 Palaiseau Cedex, France}\and
C. Mendico \thanks{ Dipartimento di Matematica, Universit\`a di Roma 'Tor Vergata', Via della Ricerca Scientifica 1, I-00133 Roma, Italy (mendico@mat.uniroma2.it)} \and
M.
Quincampoix\thanks{Univ.\ Brest, CNRS UMR 6205, Laboratoire de Mathematiques de Bretagne Atlantique     6, Avenue Victor    Le Gorgeu,     29200 Brest,     France (marc.quincampoix@univ-brest.fr) }
}

\begin{document}

\maketitle

\begin{abstract}
A classical problem in ergodic continuous time control consists of studying the limit behavior of the optimal value of a discounted cost functional with infinite horizon  as the discount factor $\lambda$  tends to zero. In the literature, this problem has been addressed under various controllability or ergodicity conditions ensuring that the rescaled value function converges uniformly to a constant limit. In this case the limit can be characterized as the unique constant such that a suitable Hamilton-Jacobi equation has at least one continuous viscosity solution.
In this paper, we study this problem without such conditions, so that the aforementioned limit needs not be constant. Our main result characterizes
the uniform limit (when it exists)  as the maximal subsolution of a system of Hamilton-Jacobi equations. Moreover, when  such a subsolution is a viscosity solution, we obtain the convergence of optimal values as well as a rate of convergence. This mirrors
the analysis of the discrete time case, where
we characterize the uniform limit as the supremum over a set
of sub-invariant half-lines of the dynamic programming operator.
The emerging structure in both discrete and continuous time models shows that the supremum over   sub-invariant half-lines with respect to the Lax-Oleinik semigroup/dynamic programming operator, captures the behavior of the limit cost as discount vanishes.

\end{abstract}

{\bf Keywords }  Dynamic programming, optimal control, Markov decision process, Hamilton Jacobi Bellman equations, long run average

{\bf AMS Classification}  49N99, 93C15, 90C39, 90C40, 35F21, 35B40

\section{Introduction}

For any given $\lambda>0$ let
$\val $ denote the  value function of
 the  optimal  control problem
\begin{equation}
\label{eq:val}
\val (x) := \inf_{\al \in {\cal A}} \;   \int_{0}^ \infty e ^{- \lambda t}  L\big(\traj (t), \al (t)\big)dt
\qquad (x\in \Omega \subset  \rn)
\end{equation}
where ${\cal A}$ stands for the set of all measurable controls $\al :[0,\infty)\to A$ taking values in a complete metric space  $A$ and  $ \traj(\cdot)  $ for the solution of the state equation
\begin{equation}\label{eq:traj}
\begin{cases}
\hspace{.cm}
X'(t)=f\big(X(t), \al (t)\big),& t>0
\\
X(0)=x \enspace .\hspace{.cm}
\end{cases}
\end{equation}
The assumptions we give below ensure that (\ref{eq:traj}) has a unique solution and that  $\val $ satisfies the following Hamilton-Jacobi equation \be \label{EDP}
\lambda V (x) + H \big(x, -\nabla V (x) \big) = 0\qquad(x \in \overline{\Omega})
\ee where the {\em Hamiltonian} is given by
\begin{equation}
\label{H}
H(x, p) :=\max_{a \in A} \big\{ \big\langle f(x,a),p \big\rangle  -L(x,a) \big\}\qquad\forall\, (x,p) \in  \rn \times  \rn.
\end{equation}
We are interested in studying the set of cluster points ---in the  topology of uniform conver\-gence--- of the family $\{\vall\}_{\ld>0}$ as $ \lambda $ tends to $0 ^+$.

  Some of the main ideas of this work will become transparent when considering in parallel an analogous control problem in discrete time.
 In such a model, we suppose that the space state $S$ is a compact (Hausdorff) topological space, and that the evolution of the value function is described by a dynamic programming operator $T$, acting on the Banach space $X=\cont(S)$ of real values continuous functions on $S$, equipped with the sup-norm. In the case of a single player, the operator $T$ is a sup-norm nonexpansive mapping of the form
  as
  $$
T_i(x) = \inf_{a\in A_i} \big(r^a_i  + P^a_i x),\qquad i\in S $$
where for any state $i \in S$, $T_i  : X \to \R$ is the $i$-coordinate map of $T$,  $A_i$ is a non-empty set of actions, $r^a_i \in \R$ is a payment
depending on the state and action, and $P^a_i$ a Markov kernel,
i.e., a positive continuous linear map $X\to \R$ such that $P^a_i e =1$,
where $e$ denotes the unit function (identically one). The operator $T$ may be thought of as a discrete time analogue of the Lax-Oleinik semigroup,
i.e., of the evolution semigroup of the PDE
\[
\frac{\partial v}{\partial t} + H(x,- \nabla v(x)) = 0  \enspace .
\]

The discrete-time analogue of the discounted value
function $V_ \lambda$  which satisfies  (\ref{EDP}),  is the vector $v_\discount\in X$ defined, for a discount factor
$1-\discount$ with $0<\discount<1$,  by
\[
T((1-\discount) v_\discount) = v_\discount
\]
The correspondence between the discrete and continuous time case
is obtained by setting $\exp(-\lambda)=1-\discount$.
We are interest ed in studying the set of cluster points---in the  topology of uniform convergence--- of the family $  \discount v_\discount $ as $ \discount$ tends to $0 ^+$.

In the continuous case,
we will show that  the set of cluster points---in the  topology of uniform convergence -- of the family $\{\vall\}_{\ld>0}$ consists of at most a singleton, which we characterize in terms of subsolutions of a system of Hamilton-Jacobi equations.
This provides an alternative to the characterization
in terms of invariant measures addressed in \cite{BQR}.  Various conditions that guarantee that the set of cluster points is nonempty can be found in~\cite{CQ, QR}.

The analysis developed in this paper is a counterpart of weak KAM theory and Aubry-Mather theory, see for instance \cite{MS} and  \cite{DFIM}, in which the long-time average behavior, and respectively the vanishing discount problem for Hamilton-Jacobi equations, is studied for calculus of variation problems. We recall that in this setting, it has been proved,
under a controllability assumption (coercivity of the Hamiltonian in the adjoint
variable),
that the sequence $\{ \lambda V_{\lambda}\}_{\lambda >0}$ locally uniformly converges
to a constant function, the so-called Ma\~n\'e critical value,
which can be represented  as the minimal action over the set of invariant probability measures   under the Lagrangian flow, see \cite{Fathi}.
Moreover,  the sequence $\{ V_{\lambda}\}_{\lambda >0}$, up to additive constants,
locally uniformly converges to a solution of the so-called critical Hamilton-Jacobi equation. We also refer to \cite{A1, A2, barles} for a PDE approach and to \cite{BV, Col, GS,  Grune, Wirth, bib:Cannarsa_Mendico} for a study based on properties of trajectories of control systems. 
 One of the main results of this paper is a quite general characterization of the set of cluster points of the sequence $\{ \lambda V_{\lambda}\}_{\lambda > 0}$ (\Cref{representationI}), that we show to be at most a singleton. In particular, we prove that such a single function is given by
the supremum of all $v \in \mathcal C ( \overline{\Omega})$ for which one can find another function $u \in \mathcal C ( \overline{\Omega})$  such that the pair $(u,v)$ satisfies the following system of Hamilton Jacobi inequalities  in the viscosity sense 
 \begin{equation*}
\begin{cases}
 h(x, -\nabla v (x)) \leq 0
 \\
 v+H(x,-\nabla u (x)) \leq 0
\end{cases}
\quad\text{on}\;\;\overline { \Omega },
\end{equation*}  where $ 
h(x, p): =\max_{a \in A}\, \langle f(x,a),p \rangle $. 
 In our opinion, one interesting feature of the above result, compared with classical weak KAM theory,  is the fact that the limit of $\lambda V_{\lambda}$---when it exists---is not a constant but a function.   In the ergodic case (namely when $v$ is a constant $c$), the second line of the above system of Hamilton Jacobi  inequalities  is related  to  the well-known  equation $ c+ H(x,-\nabla u (x)) =0$ studied for ergodic control  \cite{AB} or homogeneization \cite{LPV}. In this case, where $v$ is a constant, the first line of the system does not appear because it is trivially satisfied. Moreover, the solvability
of a variant of the above system, with equalities instead of inequalities, yields an estimate for the rate of convergence of $\lambda V_{\lambda}$ to its limit as $\lambda\downarrow 0$, see Theorem \ref{speed}.
Furthermore, for specific control systems, we prove that the limit  of $\lambda V_{\lambda}$  could be expressed  as the infimum over the set of reachable points of the minimal Lagrangian, see Proposition~\ref{representationII}. However, the existence of solutions to the above system is strictly related to the existence of the (possibly) uniform limit of $V_{\lambda}$ as $\lambda \downarrow 0$. This is known to be a challenging and still an open problem when the state equation is nonlinear in space and in control due to, for instance, the lack of small time controllability of the underlying controlled dynamical system.

We refer the reader to \cite{BQR, BLQR} for an alternative characterization of the limit function in terms of invariant measures for the controlled dynamics.  We point out that, in order to guarantee the existence of invariant measures, the authors suppose that there exists an invariant set $\OOO \subset \R^{d}$ for the dynamics, i.e., for any initial point in $\OOO$ and any control function the solution of the state equation remains in $\OOO$. We also refer the reader to \cite{CQ, QR} for various conditions to guarantee that the set of cluster points of $\{ \lambda V_{\lambda} \}_{\lambda > 0}$ in nonempty.

We also provide an analogue of our main result in the discrete time setting
(see~\Cref{th-2}). Here, the key notion is that of sub-invariant half-line
of the operator $T$. Such a half-line refers to a function of a real
positive parameter $s$, of the form $s\mapsto u+ s\eta$ where $u,\eta\in X$,
and such that
\[ T(u+s\eta)\geq u+(s+1)\eta, \qquad \forall s\geq 0\enspace,
\]
so that $u$ can be thought of
as a ``basepoint'' and $\eta$ as an ``offset'' or ``director vector''.
We show that the the set of cluster points---in the  topology of uniform convergence -- of the family $  \discount v_\discount $ consists of at most a single point, which coincides with the supremum of the offsets $\eta$, among the
set of all sub-invariant half-lines. Subsolutions of the PDE system
are the continuous-time counterparts of sub-invariant half-lines.

The notion of sub-invariant half-line is inspired by a result of Kohlberg~\cite{kohlberg}, showing that, in finite dimension, polyhedral nonexpansive operators $T$
admit invariant half-lines (satisfying the above condition with equality),
and that the offset characterizes the limit. However, the condition
of existence of invariant half-lines (the polyhedrality of the operator $T$)
is extremely demanding. In contrast, the present relaxed
approach using sub-invariant half-lines works in full generality for one
player problems. Another source of inspiration, still in the finite
dimensional case, is the theory of the ``lexicographic system'' developed
in particular in~\cite{denardofox} and in~\cite{dynkin} to study the ergodic problem for Markov decision processes,
still in the polyhedral
(finite state and action spaces) setting. The present PDE system that we use
provides an infinitesimal version of the lexicographic system.

\medskip
The paper is organized as follows. In \Cref{sec-prelim}, we fix the setting of the first part of the paper, that is, notations, assumptions and we recall some preliminaries on Hamilton-Jacobi equations. In \Cref{sec-continuous}, we study the problem of characterizing the cluster point of $\{\lambda V_{\lambda}\}_{\lambda > 0}$, if it exists (see Theorem \ref{representationI}). In Section 4, we study the relation between such a cluster point  and certain systems of Hamilton-Jacobi equations $(S)$. Moreover, we give an estimate for the rate of convergence of $\lambda V_{\lambda}$, see Theorem \ref{speed}. In Section 5, we provide examples  of application of the analysis of the previous sections.
  Section 6 is devoted to the discrete time problem. We use a  general abstract  setting (AM-spaces with units) which encompasses, in particular, Bellman or Shapley operators over a compact state space (and, as a special case, Markov decision processes).  Section 7 contains our  main result for the discrete time case (Theorem  \ref{th-2}), which gives a characterization of the cluster points of $  \{\discount v_\discount \} _{\discount >0}$.
The appendix contains technical results needed in the paper, establishing, in particular, monotonicity properties for Hamilton Jacobi equations.

\section{Preliminaries}\label{sec-prelim}
We denote by $| x| $ the euclidean norm of $x \in  \R ^n$ and by $\<\cdot, \cdot> $ the associated scalar product. %

\subsection{Assumptions}
Let $f:  \rn\times U \to  \rn $ be a continuous map such that
\begin{equation}\label {eq:f}
\begin{cases}
(i)\hspace{.cm}
&
|f(x,a)|\leqslant  C
\\
(ii)\hspace{.cm}
&|f(x,a)-f(y,a)|\leqslant  C|x-y|
\end{cases}
\qquad\forall x,y\in \rn\,,\;\forall a\in A
\end{equation}
for some constant $C \geqslant 0$. Let   $L:  \rn \times A\to \R$ be a bounded continuous function satisfying
\begin{equation}\label{eq:L}
\begin{cases}
(i)\hspace{.cm}
&
0 \leqslant  L(x,a)  \leqslant  1
\\
(ii)\hspace{.cm}
&|L(x,a)-L(y,a)|\leqslant  k|x-y|
\end{cases}
\qquad\forall x,y\in \rn\,,\;\forall a\in A
\end{equation} for some constant $k \geqslant 0$.
We suppose throughout the article that
\begin{equation}\label {eq:conv}
\{ (f(x,a), L (x,a) +r ), \, a \in A, \, r \geq 0 \,\} \mbox{ is a convex set.}
\end{equation}

In this paper we assume the existence of a bounded open domain $\Omega \subset  \rn $ such that $\overline{\Omega}$ is {\em invariant} for system \eqref{eq:traj}, that is,
\begin{equation*}
x\in\overline{\Omega}\quad     \Longrightarrow\quad     \traj(t) \in \overline{\Omega}\,,\;\;\forall\, \al\in\mathcal A\,,\;\forall\, t\geqslant 0.
\eqno{(H_ \Omega )}
\end{equation*}

\subsection{Hamilton-Jacobi equation}
Owing to \eqref{eq:L} one can easily show that $ 0 \leqslant  \vall (x) \leqslant  1$ for all $x \in  \overline{\Omega }$. Moreover, under assumption $(H_ \Omega )$ it is also known that, for any fixed $\lambda >0$, the value function $\val$ defined in \eqref{eq:val} is continuous on $\overline{\Omega }$.
Associated with the minimization problem in \eqref{eq:val}, we consider the Hamilton-Jacobi equation (\ref{EDP}), whose solutions have to be understood as follows.

\begin{definition}\label{visc}
A continuous function $ V  : \overline{\Omega } \to \R$ ($V\in\mathcal C(\bO)$ in short) is a {\em viscosity solution} of \eqref{EDP} on $\overline{\Omega }$ if and only if:
\begin{itemize}
\item[$(a)$] $V$ is a viscosity {\em supersolution} on $\overline{\Omega }$, that is, $ \forall x \in \overline{\Omega }$
$$\lambda V(x) + H(x, -p) \geq 0 \qquad  \forall p \in \partial ^- _\Omega V (x),$$ where
\begin{equation*}
 \partial ^- _\Omega V (x) = \Big\{\; p \in \R ^n ~:~\liminf _{\bO\ni y \to x} \,\frac{V(y)-V(x) -\langle p, y-x\rangle }{|y -x|}\, \geq 0 \;\Big\}
\end{equation*} is the {\em subdifferential} of $V$ at $x$;
\item[$(b)$] $V$ is a viscosity {\em subsolution} on $\overline{\Omega }$, that is, $ \forall x \in \overline{\Omega }$
$$\lambda V(x) + H(x, -p) \leq 0 \qquad  \forall p \in \partial ^+ _\Omega V (x),$$ where
\begin{equation*}
 \partial ^+ _\Omega V (x) = \Big\{\; p \in \R ^n ~:~\limsup _{\bO\ni y \to x} \,\frac{V(y)-V(x) -\langle p, y-x\rangle }{|y -x|} \,\leq 0 \;\Big\}
\end{equation*}
is the {\em superdifferential} of $V$ at $x$.
\end{itemize}
\end{definition}
In the appendix
at the end of the paper, under assumption ($H _ \Omega $), we  will show that $ \val$ is the unique viscosity solution on $\overline{\Omega }$ of the Hamilton Jacobi equation \eqref{EDP}.
 \begin{remarks}
 \begin{enumerate}
 \item
Observe that when $x \in \Omega$, the sets $\partial _\Omega^+ V (x)$ and $\partial_\Omega^-V (x)$ reduce to the classical super and subdifferentials.

 \item The properties required by the above definition of solution are slightly different from the ones classically used for Hamilton-Jacobi equations with state constraints (cf \cite{BCD}), that is, )$V$ is a constrained viscosity solution if and only if it is a supersolution on $\overline{\Omega }$ and a subsolution on $\Omega $ (cf \cite{BCD}).

\item When  $\overline{\Omega }$ is invariant, a uniqueness result for constrained viscosity solutions of \eqref{EDP} can be obtained \cite{BCD} imposing regularity assumptions (interior cone condition) on the boundary of $ \Omega$. In the present paper, such regularity assumptions  are not needed.
 \end{enumerate}
\end{remarks}

\section{The vanishing discount limit in continuous time}\label{sec-continuous}
In this section, we analyse the behaviour of the rescaled family of value functions $\{\vall\}_{\ld>0}$ as the so-called discount rate $\ld$ goes to zero.
The existence of  cluster points of such a family, in the uniform topology on $\bO$, is usually deduced by compactness arguments showing that $\{\vall\}_{\ld>0}$ is equicontinuous on $\bO$. Equicontinuity can in turn be obtained by imposing further assumptions on the data, such as the coercivity of $H(x,\cdot)$ as in \cite{BCD} (which implies that the uniform limit is constant), or various types of nonexpansiveness conditions  (see \cite{QR, CQ} and the references therein). We do not address such an issue in this paper but we rather look for general
properties that can be deduced assuming the existence of cluster points or the equicontinuity of $\{\vall\}_{\ld>0}$.

\subsection{Cluster points of $\{\vall\}_{\ld>0}$}
We begin by analysing the properties of  cluster points  of the family $\{\vall\}_{\ld>0} \subset \mathcal C ( \overline{\Omega})$.
We  define the  {\em reduced Hamiltonian} by
\begin{equation}
\label{red_H}
h(x, p) =\max_{a \in A}\, \langle f(x,a),p \rangle \qquad \forall  (x,p) \in  \rn \times  \rn.
\end{equation}

\begin{proposition} \label{La}
If  $v^* $ is a cluster point of the family $\{\vall\}_{\ld>0}$ as $ \la \to 0 ^+ $ in the uniform topology  on  $\overline{\Omega}$, then $v^*$ is a viscosity solution of the equation
\begin{equation} \label{h}
h(x, -\nabla v^* (x)) =0 \quad \mbox{in}\quad \overline{\Omega}.
\end{equation}
\end{proposition}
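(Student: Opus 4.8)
The plan is to pass to the limit in the Hamilton-Jacobi equation \eqref{EDP} satisfied by $\val$, exploiting the scaling and the uniform bound $0\leq \val\leq 1$. Recall that $\vl$ is the unique viscosity solution of $\lambda \vl(x)+H(x,-\nabla\vl(x))=0$ on $\bO$ (established in the appendix). Multiplying through is not what I want; rather I observe directly that $\wl:=\vall$ satisfies, in the viscosity sense on $\bO$,
\[
\lambda\, \wl(x) + H\bigl(x,-\tfrac1\lambda\nabla\wl(x)\bigr)=0,
\]
i.e. $H(x,-\nabla\wl(x)/\lambda)=-\lambda\wl(x)$. Since $0\leq\vl\leq1$ we have $0\leq\wl=\vall\leq\lambda$, so the right-hand side is $O(\lambda)\to0$. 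Using the definition \eqref{H} of $H$ together with \eqref{red_H}, for any $p$,
\[
H(x,p/\lambda)=\max_{a\in A}\Bigl\{\tfrac1\lambda\langle f(x,a),p\rangle - L(x,a)\Bigr\}
= \tfrac1\lambda\, h(x,p) - \min_{a\in\mathrm{argmax}}L(x,a)\quad\text{(roughly)},
\]
but more robustly: from $|L|\leq1$ one gets the two-sided bound $\tfrac1\lambda h(x,p)-1\leq H(x,p/\lambda)\leq \tfrac1\lambda h(x,p)$, hence $\bigl|\,h(x,p)-\lambda H(x,p/\lambda)\,\bigr|\leq\lambda$. Combining with the equation for $\wl$ gives $|h(x,-\nabla\wl(x))|\leq\lambda + \lambda^2$ in the viscosity sense; more precisely, $\wl$ is a viscosity subsolution of $h(x,-\nabla\wl(x))\leq \lambda+\lambda^2$ and a supersolution of $h(x,-\nabla\wl(x))\geq -\lambda - \lambda^2$ on $\bO$.

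Next I would use the stability of viscosity (sub/super)solutions under uniform convergence. Let $\lambda_n\downarrow0$ be a subsequence with $\wl[\lambda_n]=\lambda_n V_{\lambda_n}\to v^*$ uniformly on $\bO$. Since $x\mapsto h(x,p)$ is continuous (it is a max of continuous functions over $A$, and by \eqref{eq:f}(i) it is finite and locally Lipschitz in $p$ uniformly in $x$), the standard stability theorem for viscosity solutions — adapted to the constrained sub/superdifferentials $\partial^\pm_\Omega$ of \Cref{visc}, which behave well under test-function perturbations exactly as in the interior case — lets me pass to the limit: $v^*$ is a viscosity subsolution of $h(x,-\nabla v^*(x))\leq0$ and a viscosity supersolution of $h(x,-\nabla v^*(x))\geq0$ on $\bO$, because the error terms $\lambda_n+\lambda_n^2$ vanish. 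Therefore $v^*$ is a viscosity solution of $h(x,-\nabla v^*(x))=0$ on $\bO$, which is \eqref{h}.

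The one point that needs care, and which I expect to be the main technical obstacle, is the handling of the boundary: the definition of viscosity solution here is on all of $\bO$ (using $\partial^\pm_\Omega$), not the usual constrained-solution notion, so I must check that the comparison of $H(x,\cdot)$ with $h(x,\cdot)$ and the stability argument both go through uniformly up to $\partial\Omega$. The estimate $|h(x,p)-\lambda H(x,p/\lambda)|\leq\lambda$ is pointwise in $x$ and hence holds on $\bO$ with no regularity of $\partial\Omega$ needed, so the only real work is to verify that when a smooth test function $\varphi$ touches $\wl[\lambda_n]$ from above (resp. below) at a boundary point $x_n\to x$, the limiting inequality is obtained at $x$ with $p=\nabla\varphi(x)\in\partial^\pm_\Omega v^*(x)$ — this is routine once one notes that the perturbation/maximization argument producing touching test functions for $v^*$ uses only the uniform convergence $\wl[\lambda_n]\to v^*$ and not any structure of $\Omega$. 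I would also record, as a byproduct, that every cluster point $v^*$ automatically satisfies $h(x,-\nabla v^*)\leq0$, i.e. is a subsolution of the first line of the system $(S)$ quoted in the introduction, which is the hook into the subsequent characterization result.
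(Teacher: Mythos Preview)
Your approach is essentially the paper's: rescale \eqref{EDP} so that $w_\lambda=\lambda V_\lambda$ satisfies an equation whose Hamiltonian $\max_{a\in A}\{\langle f(x,a),p\rangle-\lambda L(x,a)\}$ converges uniformly to $h(x,p)$, and then invoke stability of viscosity solutions on $\overline\Omega$. Two bookkeeping slips to fix: the rescaled equation is $w_\lambda(x)+H\bigl(x,-\tfrac1\lambda\nabla w_\lambda(x)\bigr)=0$ (no extra factor $\lambda$ in front of $w_\lambda$), and the a priori bound is $0\le w_\lambda=\lambda V_\lambda\le 1$ (not $\le\lambda$), so the right-hand side is $O(1)$ and it is only after multiplying by $\lambda$ that everything becomes $O(\lambda)$---with these corrections your estimate $|h(x,-\nabla w_\lambda)|\le 2\lambda$ and the stability argument go through exactly as you outline.
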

\begin{proof}
  Let $\ld_j  \to 0 ^+ $ be such that   $ \ld_jV_{\la _j}  $ converges uniformly to $ v^*$.
By \eqref{EDP}  we have that
\begin{eqnarray*}
 0 &= &\ld_j  ^2 V_{\ld_j}+\ld_j H(x, -\nabla V_{\ld_j} )
 \\
& = &\ld_j^{2} V_{\ld_j}+ \max_{a \in A} \big\{ \big\langle f(x,a) , -\nabla (\ld_jV_{\ld_j})\big\rangle - \ld_j L(x,a) \big\}.
\end{eqnarray*}
So, it suffices to invoke the stability of viscosity solutions as $\lambda_{j} \to 0^{+}$ to conclude that
 \begin{equation*}
0 = \max_{a \in A} \big\langle f(x,a) ,   -\nabla v^*  (x) \big\rangle  = h(x, -\nabla  v^*  (x)) \quad \mbox{in}\quad \overline{\Omega}.
\end{equation*}
\end{proof}

We observe that the above property is usually proved in the open set $\Omega$ or assuming Dirichlet boundary conditions on $\partial \Omega$. However, as one can easily check, the proof  which is given in \cite{BCD} adapts unaltered to solutions on $\bO$---the case of interest to us.

The following system of Hamilton-Jacobi equations,  or, rather, inequalities, plays a major role in our approach.
\begin{definition}
We define ${\cal S}(\bO)$ to be the set of all pairs $(u,v) \in \mathcal C ( \overline{\Omega})\times  \mathcal C ( \overline{\Omega})$ which satisfy
 \begin{equation*}
\begin{cases}
 h(x, -\nabla v (x)) \leq 0
 \\
 v+H(x,-\nabla u(x) ) \leq 0
\end{cases}
\quad\text{on}\;\;\overline { \Omega }
\eqno{(H)}
\end{equation*}
in the viscosity sense, that is, in the sense of Definition~\ref{visc}~$(b)$.
\end{definition}

\begin{proposition}\label{pr:cluster_gives_subsolution}
 Let $\ld_j  \stackrel{j\to\infty}{\longrightarrow} 0 ^+ $ be such that   $ \ld_jV_{\la _j}  \stackrel{j\to\infty}{\longrightarrow} v^*$ uniformly on $\bO$. Then
 \begin{equation}\label{eq:cluster_gives_subsolution}
\big(V_{\ld_j},v^*-\epsilon_j\big)\in {\cal S}(\bO)\qquad\forall j\in\N
\end{equation}
where $ \epsilon _j = \| \ld_jV_ {\ld_j} - v^* \| _ \infty $.
\end{proposition}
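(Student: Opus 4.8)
The plan is to check directly that the pair $(V_{\lambda_j},v^*-\epsilon_j)$ satisfies, in the viscosity subsolution sense of Definition~\ref{visc}~$(b)$, both inequalities of the system $(H)$. These decouple: the first involves only $v^*-\epsilon_j$, while the second involves only $V_{\lambda_j}$ together with the scalar $v^*-\epsilon_j$. I would treat them in turn.

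For the first inequality $h(x,-\nabla(v^*-\epsilon_j)(x))\leq 0$ on $\bO$: by hypothesis $v^*$ is a cluster point of $\{\lambda V_\lambda\}_{\lambda>0}$ in the uniform topology (it is the limit along $\lambda_j$), so Proposition~\ref{La} applies and gives that $v^*$ is a viscosity solution, in particular a viscosity subsolution, of $h(x,-\nabla v^*(x))=0$ on $\bO$. Since $\epsilon_j$ is a constant, $\partial^+_\Omega(v^*-\epsilon_j)(x)=\partial^+_\Omega v^*(x)$ for every $x\in\bO$, and $h(x,p)$ does not depend on the value of the function but only on $x$ and $p$. Hence the first inequality of $(H)$ holds for $v^*-\epsilon_j$ with no regularity of $v^*$ beyond continuity.

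For the second inequality $(v^*-\epsilon_j)+H(x,-\nabla u(x))\leq 0$ with $u=V_{\lambda_j}$: fix $x\in\bO$ and $p\in\partial^+_\Omega V_{\lambda_j}(x)$. Since $V_{\lambda_j}$ is the viscosity solution (hence subsolution) of \eqref{EDP} on $\bO$ --- the existence/uniqueness statement recalled before Definition~\ref{visc} and proved in the appendix --- we have $\lambda_j V_{\lambda_j}(x)+H(x,-p)\leq 0$. On the other hand, by the very definition $\epsilon_j=\|\lambda_j V_{\lambda_j}-v^*\|_\infty$ we get $v^*(x)-\epsilon_j\leq\lambda_j V_{\lambda_j}(x)$ for all $x\in\bO$. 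Adding $H(x,-p)$ to both sides yields $(v^*(x)-\epsilon_j)+H(x,-p)\leq\lambda_j V_{\lambda_j}(x)+H(x,-p)\leq 0$, which is exactly the required subsolution inequality. Since this holds for every $x\in\bO$ and every $p\in\partial^+_\Omega V_{\lambda_j}(x)$, the pair $(V_{\lambda_j},v^*-\epsilon_j)$ belongs to $\mathcal{S}(\bO)$.

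I do not expect a genuine obstacle: the statement is essentially bookkeeping of constants layered on top of Proposition~\ref{La} and the characterization of $V_\lambda$ as the viscosity solution of \eqref{EDP}. The only points needing a little care are (i) that the viscosity inequalities are understood on the closed set $\bO$ in the sense of Definition~\ref{visc}~$(b)$, which is already how Proposition~\ref{La} and the appendix result are formulated, and (ii) that translating a continuous function by a constant leaves its sub/superdifferentials unchanged.
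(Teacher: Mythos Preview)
Your proof is correct and follows essentially the same approach as the paper's own proof: invoke Proposition~\ref{La} for the first inequality of $(H)$ (noting that subtracting the constant $\epsilon_j$ leaves the superdifferential unchanged), and use the viscosity subsolution property of $V_{\lambda_j}$ for \eqref{EDP} together with the pointwise bound $v^*-\epsilon_j\leq\lambda_jV_{\lambda_j}$ for the second inequality. Your write-up is simply more explicit about the superdifferential manipulations than the paper's terse version.
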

\begin{proof}
Since $\val$ is the viscosity solution of  \eqref{EDP},
we have that
\begin{equation*}
H( x , -\nabla V_{\ld_j}) =  -\ld_jV_{\ld_j}  \leqslant -v^*  + \epsilon _j\quad\text{on}\;\;\overline { \Omega }
\end{equation*}
in the viscosity sense. Hence, Proposition~\ref{La} ensures that  $ v: = v^* - \epsilon _j $ and $ u := V_ {\lambda_j } $ satisfy
 $$h(x,- \nabla v ) \leqslant 0  \quad \mbox{and}  \quad v+H(x, -\nabla u )
   \leqslant 0\quad \text{on}\;\; \overline{\Omega}. 
$$
\end{proof}

\subsection{Characterization of the vanishing discount limit}
We now study  the subsolutions of $(H)$  which belong to
$$\Lip (\bO):=\Big\{ f\in \mathcal C(\bO)~:~\sup_{x\neq y}\,\frac{|f(y)-f(x)}{|y-x|}\,<\infty\Big\}.$$
\begin{proposition}\label{speed1}
 Let
 $$
 (\bar u,\bar v)\in \mathcal S_{\Lip}(\bO):=\big\{(u,v)\in  \mathcal S(\bO)~:~u,v\in \Lip(\bO)\big\}.
 $$
Then the following holds true for any $\ld>0$:
\begin{itemize}
\item[$(a)$] for any $c\in\R$ the function
\begin{equation}
\label{eq:def_w_c}
u_c(x):=\,\frac{\bar v(x)}\ld\,+\bar u(x)+c\quad (x\in\bO)
\end{equation}
satisfies
\begin{equation}
\label{eq:w_c}
\ld u_c+H(x,-\nabla u_c)\leq \ld(\bar u+c)  \quad\text{on}\;\;\overline { \Omega }
\end{equation}
in the viscosity sense;
\item[$(b)$] $\val$ satisfies the lower bound
\begin{equation}
\label{eq:lowerb}
\bar u(x)-\|\bar u\|_\infty\leq \val(x)-\,\frac{\bar v(x)}\ld\qquad\forall\,x\in\bO
\end{equation}
\end{itemize}
\end{proposition}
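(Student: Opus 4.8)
The plan is to establish $(a)$ by a pointwise computation at points where $\bar u$ and $\bar v$ are differentiable, upgraded to the viscosity sense by convexity of $H$, and then to derive $(b)$ from $(a)$ by comparison with $\val$.

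For $(a)$, I would first record the elementary inequality
\[
H\Big(x,\ p+\tfrac{1}{\ld}\,q\Big)\ \leq\ H(x,p)+\tfrac{1}{\ld}\,h(x,q)\qquad\forall\,(x,p,q)\in\rn\times\rn\times\rn,\ \forall\,\ld>0,
\]
which is immediate from the definitions \eqref{H}, \eqref{red_H} and subadditivity of the supremum (here $\ld>0$ is used). Since $\bar u,\bar v\in\Lip(\bO)$, Rademacher's theorem gives a set $\Omega_0\subset\Omega$ of full measure on which both are differentiable; at each $x\in\Omega_0$ one has $\nabla\bar u(x)\in\partial^+_\Omega\bar u(x)$ and $\nabla\bar v(x)\in\partial^+_\Omega\bar v(x)$ (at interior points the superdifferential is the classical one), so the inequalities defining $\mathcal S(\bO)$ give $h(x,-\nabla\bar v(x))\leq0$ and $\bar v(x)+H(x,-\nabla\bar u(x))\leq0$ for every $x\in\Omega_0$. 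On $\Omega_0$ the function $u_c$ in \eqref{eq:def_w_c} is differentiable with $\nabla u_c=\tfrac{1}{\ld}\nabla\bar v+\nabla\bar u$, and applying the displayed inequality with $p=-\nabla\bar u(x)$, $q=-\nabla\bar v(x)$ yields
\[
H\big(x,-\nabla u_c(x)\big)\ \leq\ H\big(x,-\nabla\bar u(x)\big)+\tfrac{1}{\ld}\,h\big(x,-\nabla\bar v(x)\big)\ \leq\ -\bar v(x),
\]
hence $\ld u_c(x)+H(x,-\nabla u_c(x))\leq \ld u_c(x)-\bar v(x)=\ld(\bar u(x)+c)$ for a.e.\ $x\in\Omega$. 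It then remains to upgrade this almost‑everywhere inequality to the viscosity inequality \eqref{eq:w_c} on all of $\bO$: this uses that $p\mapsto H(x,-p)$ is convex (a supremum of affine maps) — the interior part being the classical mollification argument for convex Hamiltonians (cf.\ \cite{BCD}) — together with the invariance $(H_\Omega)$ of $\bO$ to reach the boundary points, via the corresponding monotonicity results of the appendix.

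Part $(b)$ follows by choosing $c=-\|\bar u\|_\infty$ in $(a)$. Then $\bar u+c\leq0$ on $\bO$, so \eqref{eq:w_c} becomes $\ld u_c+H(x,-\nabla u_c)\leq\ld(\bar u-\|\bar u\|_\infty)\leq0$ on $\bO$ in the viscosity sense; that is, $u_c$ is a viscosity subsolution of \eqref{EDP}. Since $\val$ is the viscosity solution of \eqref{EDP} on $\bO$, hence in particular a supersolution, the comparison principle for \eqref{EDP} established in the appendix (together with the uniqueness of $\val$) gives $u_c\leq\val$ on $\bO$, i.e.\ $\bar u(x)+\tfrac{1}{\ld}\bar v(x)-\|\bar u\|_\infty\leq\val(x)$, which is exactly \eqref{eq:lowerb}.

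The step I expect to be the main obstacle is the passage from the a.e.\ subsolution property in $\Omega$ to the viscosity subsolution property on the closed set $\bO$. Inside $\Omega$ this is routine for convex Hamiltonians, but the non‑standard superdifferential $\partial^+_\Omega$ of Definition~\ref{visc} imposes extra constraints at the boundary, and handling them forces one to exploit the invariance of $\bO$ — which is presumably the role of the results collected in the appendix, and also the reason the proposition is stated for Lipschitz pairs $(\bar u,\bar v)$, so that Rademacher's theorem applies.
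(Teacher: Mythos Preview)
Your argument for $(b)$ is exactly the paper's: choose $c=-\|\bar u\|_\infty$, deduce that $u_c$ is a subsolution of \eqref{EDP}, and apply the comparison Theorem~\ref{comp}.

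For $(a)$, however, you take a genuinely different route. The paper does not touch Rademacher or the subadditivity inequality $H(x,p+q/\ld)\leq H(x,p)+\tfrac1\ld h(x,q)$ at all. Instead it applies Proposition~\ref{mono}~(ii) \emph{twice} to pass from the two viscosity subsolution properties of $(\bar u,\bar v)$ to the trajectory inequalities
\[
\bar u(x)\leq \bar u(\traj(t))+\int_0^t\!\big[L-\bar v\big]\,ds,\qquad \bar v(x)\leq \bar v(\traj(t)),
\]
for every $x\in\bO$, every control $\alpha$, and every $t\geq 0$; adds $\bar v/\ld+c$ to the first and uses the second to get the same trajectory inequality for $u_c$; and then applies Proposition~\ref{mono}~(ii) in the reverse direction to recover the viscosity inequality $\bar v+H(x,-\nabla u_c)\leq 0$ on $\bO$, which is \eqref{eq:w_c}. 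The advantage of this detour through trajectories is that Proposition~\ref{mono} is stated directly on $\bO$, so the boundary is handled automatically and no a.e.\ $\to$ viscosity upgrade is needed.

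Your route is viable---the pointwise inequality is correct and the interior upgrade via convexity is standard---but the step you flag as the main obstacle really is one: you have only sketched how to pass from ``viscosity subsolution on $\Omega$'' to ``viscosity subsolution on $\bO$'' in the sense of Definition~\ref{visc}, and the appendix does not provide that implication off the shelf (Proposition~\ref{mono} assumes the subsolution property on $\bO$ from the start). One can close the gap by first obtaining the trajectory inequality for interior initial points and then extending to $x\in\partial\Omega$ by continuity, but at that point you are essentially reproducing the paper's trajectory argument with extra work in front of it.
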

\begin{proof} Since $\bar u,\bar v\in \mathcal S_{\Lip}(\bO)$, Proposition~\ref{mono}~$(ii)$  below ensures that
for all $x \in  \overline{\Omega }$ and $ \alpha \in {\cal A}$
\begin{equation*}
\bar u (x)  \leq  \bar u  \big(\traj (t)\big) + \int _0 ^t  \big[L\big(\traj (s), \al (s)\big)-\bar v\big(\traj (s)\big)\big]ds
\qquad  \forall t \geq 0
\end{equation*}
and
\begin{equation*}
\bar v (x)  \leq  \bar v  (\traj (t) )
\qquad  \forall t \geq 0\,.
\end{equation*}
Hence,
\begin{equation*}
u_c (x)  \leq  u_c \big(\traj (t)\big) + \int _0 ^t  \big[L\big(\traj (s), \al (s)\big)-\bar v\big(\traj (s)\big)\big]ds
\qquad  \forall t \geq 0
\end{equation*}
which, again by Proposition~\ref{mono}~$(ii)$,  yields
$
 \bar v+H(x,-\nabla u_c) \leqslant  0\; \text{on}\; \overline{\Omega}
$
and  \eqref{eq:w_c}.

The proof is completed noting that $(b)$ is an easy consequence of $(a)$. Indeed, choosing $ c=-\|u\|_\infty$ in \eqref{eq:def_w_c} we deduce from \eqref{eq:w_c} that $u_{ c}$ is a subsolution of \eqref{EDP}. Therefore, $u_{ c}\leq \val$ on $\bO$ by comparison.\end{proof}

\begin{remark}\label{rm:Lipschitz_subsolution}
 In general, the existence of a cluster point $v^*$ for the family $\{\vall\}_{\ld>0}$ does not suffice to guarantee that
 $ \mathcal S_{\Lip}(\bO)\neq\varnothing$. However, if $v^*\in \Lip(\bO)$ then this is indeed the case because the first component, $V_{\ld_j}$, in \eqref{eq:cluster_gives_subsolution} is always Lipschitz.
\end{remark}

Our next result (Theorem \ref{representationI})
characterizes the unique, possible, cluster point of the family $\{\lambda V_{\lambda}\}_{\lambda > 0}$.

\begin{theorem}[{\bf Characterization}]\label{representationI}
Let  $v^*$ be a cluster point of $\{\vall\}_{\ld>0}$ as $ \la \downarrow 0$,  in the uniform topology on  $\overline{\Omega}$, such that $v^*\in \Lip(\bO)$. Then, we have that
\begin{equation}\label{R1}
v^{*}(x) = \sup \big\{\,v(x)~:~(u,v)\in {\cal S}_{\Lip} (\bO) \,\big\} \qquad  \big(x \in  \overline{\Omega}\big)
 \end{equation}
 \end{theorem}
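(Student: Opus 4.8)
The plan is to prove the two inequalities in \eqref{R1} separately. For the inequality $v^*(x)\le \sup\{v(x):(u,v)\in{\cal S}_{\Lip}(\bO)\}$, I would use Proposition~\ref{pr:cluster_gives_subsolution}: along the subsequence $\ld_j\downarrow 0$ with $\ld_jV_{\ld_j}\to v^*$ uniformly, the pair $(V_{\ld_j},v^*-\epsilon_j)$ lies in ${\cal S}(\bO)$, and since $V_{\ld_j}$ is Lipschitz (with a Lipschitz constant that may a priori blow up, but this does not matter for membership in $\Lip(\bO)$) and $v^*-\epsilon_j\in\Lip(\bO)$ by hypothesis on $v^*$, in fact $(V_{\ld_j},v^*-\epsilon_j)\in{\cal S}_{\Lip}(\bO)$. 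Hence $v^*(x)-\epsilon_j\le \sup\{v(x):(u,v)\in{\cal S}_{\Lip}(\bO)\}$ for every $j$, and letting $j\to\infty$ with $\epsilon_j\to 0$ gives the desired bound. This direction is essentially immediate from the preparatory propositions.

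The reverse inequality $v^*(x)\ge v(x)$ for every $(u,v)\in{\cal S}_{\Lip}(\bO)$ is the substantive part. Fix such a pair $(\bar u,\bar v)$. The idea is to exploit Proposition~\ref{speed1}: for each $\ld>0$, part $(b)$ gives the lower bound $\bar u(x)-\|\bar u\|_\infty\le V_\ld(x)-\bar v(x)/\ld$, i.e.
\[
\bar v(x)\le \ld V_\ld(x)-\ld\bigl(\bar u(x)-\|\bar u\|_\infty\bigr)\le \ld V_\ld(x)+2\ld\|\bar u\|_\infty\qquad(x\in\bO).
\]
Now pass to the limit along the subsequence $\ld=\ld_j\downarrow 0$: the right-hand side converges uniformly to $v^*(x)$ because $\ld_jV_{\ld_j}\to v^*$ uniformly and $2\ld_j\|\bar u\|_\infty\to 0$. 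Therefore $\bar v(x)\le v^*(x)$ for all $x\in\bO$, which is exactly what is needed. Taking the supremum over all $(\bar u,\bar v)\in{\cal S}_{\Lip}(\bO)$ and combining with the first inequality (and noting $v^*$ itself is attained, so the supremum is not merely a sup but realized by $v^*$) yields \eqref{R1}.

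The main obstacle — or rather the place where the real work has already been done — is Proposition~\ref{speed1}, whose proof relies on the monotonicity result Proposition~\ref{mono}~$(ii)$ in the appendix: one must know that a Lipschitz subsolution of the $h$-inequality is genuinely non-decreasing along all trajectories of \eqref{eq:traj}, and that a Lipschitz subsolution of the $v+H(x,-\nabla u)\le 0$ inequality satisfies the corresponding integral super-optimality inequality, so that the comparison argument producing $u_c\le V_\ld$ goes through. Assuming those facts, as I may here, the proof of Theorem~\ref{representationI} is a short two-sided squeeze: Proposition~\ref{pr:cluster_gives_subsolution} for ``$\le$'' and Proposition~\ref{speed1}$(b)$ plus the uniform convergence $\ld_jV_{\ld_j}\to v^*$ for ``$\ge$''. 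One should also remark that the hypothesis $v^*\in\Lip(\bO)$ is used in both directions: in ``$\le$'' it guarantees ${\cal S}_{\Lip}(\bO)\ne\varnothing$ (cf.\ Remark~\ref{rm:Lipschitz_subsolution}), and without it the right-hand side of \eqref{R1} could fail to reach $v^*$.
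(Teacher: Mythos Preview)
Your proof is correct and follows essentially the same two-sided squeeze as the paper: Proposition~\ref{pr:cluster_gives_subsolution} together with Remark~\ref{rm:Lipschitz_subsolution} for the inequality $v^*\le v_0$, and Proposition~\ref{speed1}\,(b) (i.e.\ \eqref{eq:lowerb}) applied along $\lambda_j\downarrow 0$ for the inequality $v_0\le v^*$. The only imprecision is your parenthetical claim that $v^*$ itself is attained in the supremum---this is not established (only $v^*-\epsilon_j$ is), but it is also unnecessary, since the two inequalities already force equality.
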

\begin{proof} Define the function $v_{0}$ as the right-hand side of \eqref{R1}, that is,
\begin{equation*}
	v_{0}(x)= \sup \big\{\,v(x)~:~(u,v)\in {\cal S}_{\Lip} (\bO) \,\big\} \qquad  \big(x \in  \overline{\Omega}\big).
\end{equation*}
Let $\ld_j  \downarrow 0$ be such that   $\ld_jV_{\ld_j} $ converges uniformly to $  v^* $ as $j\to\infty$. First we prove that $v^*\leq v_0$.
In view of Proposition~\ref{pr:cluster_gives_subsolution}
and Remark~\ref{rm:Lipschitz_subsolution},
$(V_{\ld_j},v^*-\epsilon_j)$, with $ \epsilon _j = \| \ld_jV_ {\ld_j} - v^* \| _ \infty $, belongs to ${\cal S}_{\Lip}(\bO)$  for all $j\in\N$. Therefore $v^*-\epsilon_j\leq v_0$ for all $j$, which in turn yields $v^*\leq v_0$.

In order to prove the converse inequality, let $(\bar u,\bar v)\in {\cal S}_{\Lip} (\bO)$. Then, by \eqref{eq:lowerb} we obtain
\begin{equation*}
\bar v(x)\leq \ld_jV_{\ld_j}(x)+\ld_j(\|\bar u\|_\infty-\bar u(x))\qquad \forall x \in  \overline{\Omega}\,,\;\forall j\in \N\,.
\end{equation*}
Taking the limit as $j\to\infty$ it follows that $\bar v\leq v^*$ and this yields $v_0\leq v^*$.
\end{proof}

By Ascoli's Theorem and the above result we deduce the following.
\begin{corollary} Suppose  $\{\vall\}_{\ld>0}$  is equi-Lipschitz. Then $\vall$ converges uniformly as $ \la \downarrow 0$ to the function $v^{*}$ in \eqref{R1}.
\end{corollary}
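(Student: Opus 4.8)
The plan is to combine Theorem~\ref{representationI} with the classical Ascoli--Arzel\`a compactness argument. First I would observe that, since $\{\vall\}_{\ld>0}$ is equi-Lipschitz, it is in particular uniformly bounded (indeed $0\leq \vall\leq 1$ by the remark following $(H_\Omega)$) and equicontinuous on the compact set $\overline{\Omega}$. Hence by Ascoli's Theorem the family is relatively compact in $\mathcal C(\overline{\Omega})$ with the uniform topology, so every sequence $\ld_j\downarrow 0$ admits a subsequence along which $\ld_{j_k}V_{\ld_{j_k}}$ converges uniformly to some limit. Moreover any such limit is itself Lipschitz, with Lipschitz constant bounded by the common constant of the family, so it lies in $\Lip(\bO)$.

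The key point is then that all such subsequential limits coincide. Indeed, let $v^*$ be any cluster point of $\{\vall\}_{\ld>0}$; by the previous paragraph $v^*\in\Lip(\bO)$, so Theorem~\ref{representationI} applies and gives
\begin{equation*}
v^*(x)=\sup\big\{\,v(x)~:~(u,v)\in{\cal S}_{\Lip}(\bO)\,\big\}\qquad(x\in\overline{\Omega}),
\end{equation*}
i.e.\ $v^*$ equals the function $v^*$ of \eqref{R1}, which does not depend on the chosen subsequence. Therefore the family $\{\vall\}_{\ld>0}$ has a unique cluster point in the uniform topology, namely the function on the right-hand side of \eqref{R1}.

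Finally I would upgrade ``unique cluster point'' to ``convergence''. This is the standard fact that a relatively compact net (or sequence) in a metric space with a unique cluster point converges to that cluster point: if $\ld v_\ld$ did not converge to $v^*$ as $\ld\downarrow 0$, there would be $\varepsilon>0$ and a sequence $\ld_j\downarrow 0$ with $\|\ld_jV_{\ld_j}-v^*\|_\infty\geq\varepsilon$; by relative compactness this sequence has a uniformly convergent subsequence whose limit is a cluster point at distance $\geq\varepsilon$ from $v^*$, contradicting uniqueness. I do not expect any real obstacle here: the only mild subtlety is checking that subsequential limits of an equi-Lipschitz family remain Lipschitz (so that Theorem~\ref{representationI} is applicable), which follows by passing to the limit in the inequality $|\ld V_\ld(y)-\ld V_\ld(x)|\leq K|y-x|$, and confirming that $v^*$ as defined by \eqref{R1} is finite and well-defined, which is guaranteed by the existence of the Lipschitz cluster point via Remark~\ref{rm:Lipschitz_subsolution}.
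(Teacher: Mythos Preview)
Your proposal is correct and is precisely the argument the paper has in mind: the paper states only that the corollary follows ``by Ascoli's Theorem and the above result'' (Theorem~\ref{representationI}), and your write-up simply spells out this deduction in full. There is no discrepancy in approach.
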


Denote by $\mathcal{R}(x)$ the set of reachable points from $x \in \overline\Omega$, that is,
\begin{equation*}
\mathcal{R}(x)=\big\{X_{\alpha}^{x}(t): \alpha \in A,\ t \geq 0 \big\},
\end{equation*}
and define
\begin{equation*}
I(x):=\inf_{ y \in \mathcal{R}(x)} \min_{a \in A} L(y, a), \quad x \in \overline\Omega.
\end{equation*}

\begin{proposition}\label{representationII}
Assume $\{ \lambda V_{\lambda}\}_{\lambda >0}$ converges to  the function $v^{*}$  defined in Theorem \ref{representationI}.
Moreover, suppose that
\begin{itemize}
\item[{\bf (R)}] for any $x \in \overline\Omega$ and any $\eps > 0$ there exist $\alpha_{\eps} \in A$, $T_{\eps} > 0$ and $a_{\eps} \in A$ such that
\begin{equation*}
	 L(X_{\alpha_{\eps}}^{x}(T_{\eps}),a_{\eps})	 \leq I(x)+\eps \quad\mbox{and}\quad f(X_{\alpha_{\eps}}^{x}(T_{\eps}),a_{\eps})=0.
\end{equation*}
\end{itemize}
Then, $v^{*}(x)=I(x)$ for all $x \in \OOO$.
\end{proposition}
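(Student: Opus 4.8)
The plan is to prove the two inequalities $v^*(x)\le I(x)$ and $v^*(x)\ge I(x)$ separately, exploiting on one side the variational definition of $\val$ and on the other side the characterization of $v^*$ as a supremum over $\mathcal S_{\Lip}(\bO)$ from Theorem~\ref{representationI}. For the upper bound $v^*(x)\le I(x)$, I would use assumption \textbf{(R)}: fix $x\in\bO$ and $\eps>0$, pick $\alpha_\eps, T_\eps, a_\eps$ as in \textbf{(R)}, and then steer the trajectory to the point $\by:=X^x_{\alpha_\eps}(T_\eps)$ in finite time $T_\eps$ and thereafter apply the constant control $a_\eps$; since $f(\by,a_\eps)=0$, the trajectory stays at $\by$ forever after $T_\eps$. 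Plugging this control into \eqref{eq:val} and using $0\le L\le 1$ on the bounded initial stretch $[0,T_\eps]$ gives
\[
\val(x)\le \int_0^{T_\eps}e^{-\lambda t}\,dt + \int_{T_\eps}^\infty e^{-\lambda t}L(\by,a_\eps)\,dt
\le T_\eps + \frac{e^{-\lambda T_\eps}}{\lambda}\,\big(I(x)+\eps\big),
\]
whence $\lambda\val(x)\le \lambda T_\eps + e^{-\lambda T_\eps}(I(x)+\eps)$. Letting $\lambda\downarrow 0$ (with $T_\eps$ fixed) yields $v^*(x)\le I(x)+\eps$, and since $\eps$ was arbitrary, $v^*(x)\le I(x)$.

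For the lower bound $v^*(x)\ge I(x)$, the natural candidate is to show that $(0,I)$ — or rather a Lipschitz regularization of it — is an admissible pair in $\mathcal S_{\Lip}(\bO)$, so that $v^*(x)=\sup\{v(x):(u,v)\in\mathcal S_{\Lip}(\bO)\}\ge I(x)$. First one checks that $I$ is (at least) a viscosity subsolution of $h(x,-\nabla I)\le 0$: indeed $I$ is nonincreasing along every trajectory, since $\mathcal R(X^x_\alpha(t))\subset \mathcal R(x)$, which by Proposition~\ref{mono}~$(ii)$ (used in the reverse direction, with zero running cost) is exactly the viscosity inequality $h(x,-\nabla I)\le 0$. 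Second, one needs $u\in\Lip(\bO)$ with $I+H(x,-\nabla u)\le 0$; equivalently, by Proposition~\ref{mono}, $u(x)\le u(X^x_\alpha(t))+\int_0^t[L(X^x_\alpha(s),\alpha(s))-I(X^x_\alpha(s))]\,ds$ for all $\alpha,t$. Since $L(y,a)\ge \min_a L(y,a)\ge I(x)\ge I(X^x_\alpha(s))$ when... — here one must be careful, as $I$ is the infimum over the \emph{reachable} set, so the running cost $L-I$ along a trajectory need not be nonnegative pointwise, but its time-average bound can be recovered from the definition of $I$ combined with \textbf{(R)}; the function $u\equiv 0$ may not literally work, and one likely takes $u$ to be a suitable value-function-type object, e.g.\ $u(x)=\inf_{\alpha,t}\int_0^t[L(X^x_\alpha(s),\alpha(s))-I(X^x_\alpha(s))]\,ds$, which is bounded (by \textbf{(R)} the infimum over reaching an equilibrium is finite) and Lipschitz by the regularity of $f,L$ and of $I$.

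The main obstacle I anticipate is precisely the construction of the companion function $u$ in the lower-bound step and the verification that $I$ itself (or a Lipschitz approximation $I_\delta$) is genuinely Lipschitz and a viscosity subsolution of the reduced equation: the infimum defining $I$ ranges over a noncompact reachable set and over $A$, so lower semicontinuity and the Lipschitz estimate require care, using \eqref{eq:f} and \eqref{eq:L} together with a Gronwall-type comparison of trajectories issued from nearby points. One clean way around the regularity issue is to work with $u_\delta$ and $I_\delta$ obtained by sup/inf-convolution, verify the inequalities of $(H)$ for the regularized pair up to an $O(\delta)$ error using the stability of viscosity subsolutions, conclude $v^*\ge I_\delta - O(\delta)$, and then let $\delta\downarrow 0$. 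Combining the two inequalities gives $v^*(x)=I(x)$ on $\bO$, as claimed.
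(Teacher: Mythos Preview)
Your upper bound $v^*(x)\le I(x)$ is correct and matches the paper's argument exactly: steer to $\bar y=X^x_{\alpha_\eps}(T_\eps)$ in time $T_\eps$, then apply the constant control $a_\eps$ so that the trajectory freezes at $\bar y$, and pass to the limit $\lambda\downarrow 0$.

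For the lower bound, however, you take an unnecessarily indirect route through Theorem~\ref{representationI}, and the detour creates genuine difficulties that you do not resolve (Lipschitz regularity of $I$, construction of the companion $u$, stability under inf/sup-convolution). The paper's argument is a one-liner that bypasses all of this: for \emph{any} control $\alpha$ and any $s\ge 0$ one has $X^x_\alpha(s)\in\mathcal R(x)$ by the very definition of the reachable set, hence
\[
L\big(X^x_\alpha(s),\alpha(s)\big)\ \ge\ \min_{a\in A}L\big(X^x_\alpha(s),a\big)\ \ge\ \inf_{y\in\mathcal R(x)}\min_{a\in A}L(y,a)\ =\ I(x).
\]
Integrating against $\lambda e^{-\lambda s}$ and taking the infimum over $\alpha$ gives $\lambda V_\lambda(x)\ge I(x)$ for every $\lambda>0$, so $v^*(x)\ge I(x)$ directly. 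No regularity of $I$, no companion function, no regularization is needed.

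Two smaller remarks on your sketch. First, your worry that ``$L-I$ along a trajectory need not be nonnegative pointwise'' is unfounded: since $y\in\mathcal R(y)$ (take $t=0$), one always has $I(y)\le \min_a L(y,a)\le L(y,a)$. Second, your inequality $I(x)\ge I(X^x_\alpha(s))$ is reversed: from $\mathcal R(X^x_\alpha(s))\subset\mathcal R(x)$ one gets $I(X^x_\alpha(s))\ge I(x)$, which is precisely the monotonicity you use for $h(x,-\nabla I)\le 0$. These sign issues suggest the construction of $u$ was not going to land cleanly; in any case the direct pointwise bound above makes the whole $\mathcal S_{\Lip}$ machinery superfluous for this proposition.
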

\begin{proof}
We will prove that $\lambda V_{\lambda}(x) \to I(x)$ as $\lambda \to 0$ which implies that $v^\star(x)=I(x)$ for any $x \in \OOO$.
Recall that $\lambda V_{\lambda}$ can be represented as follows
\begin{equation*}
\lambda V_{\lambda}(x) = \lambda \inf_{\alpha \in A}\left\{ \int_{0}^{\infty}{e^{-\lambda s}L(X_{\alpha}^{x}(s), \alpha(s))\ ds}\right\},
\end{equation*}
see for instance \cite[Proposition ~III 2.8]{BCD}.
Hence,  by the above representation formula and the definition of $I(x)$, we deduce that
\begin{equation*}
\lambda \int_{0}^{\infty}{e^{-\lambda s}L(X_{\alpha}^{x}(s), \alpha(s))\ ds} \geq \lambda \int_{0}^{\infty}{e^{-\lambda s}I(x)\ ds} = I(x),
\end{equation*}
or
\begin{equation*}
\lambda V_{\lambda}(x) \geq I(x), \quad x \in \OOO.
\end{equation*}

On the other hand,  for any $ \eps>0$ take  $T _ {\eps} \geq 0$  and $a_{\eps }\in A$ as in  {\bf (R)}. Then, define the control function $\tilde{\alpha}$ as follows:
\begin{equation*}
\tilde{\alpha}(t)=
\begin{cases}
\alpha_{\eps}(t), & \text{if}\ 0 \leq t \leq T_{\eps}
\\
a _{\eps }, & \text{if}\ t > T_{\eps}.
\end{cases}
\end{equation*}
Then, we have that
\begin{align*}
\lambda V_{\lambda} \leq\ & \lambda \int_{0}^{T_{\eps}}{e^{-\lambda s} L(X_{\alpha_{\eps}}^{x}(s), \alpha_{\eps}(s))\ ds} + \lambda \int_{T_{\eps}}^{\infty}{e^{-\lambda s}L(X_{\alpha_{\eps}}^{x}(T_{\eps}), a _{\eps })\ ds}
\\
\leq\ & \big(1-e^{-\lambda T_{\eps}}+e^{-\lambda T_{\eps}}\big)(I(x)+\eps).
\end{align*}
Therefore, as $\lambda \to 0^+$ by the arbitrariness of $\eps > 0$ we get the conclusion. \end{proof}

\begin{remark}
If  $L$ is independent to the second variable, i.e. $L(x,a)=L(x)$, then assumption   {\bf (R)} can be replaced by 
$$ \forall x \in  \overline\Omega, \;  \exists a \in A , \;  f(x,a) =0.$$
\end{remark}

\begin{remark}
Observe that if the the control system is controllable, i.e.
\begin{equation*}
\mathcal{R}(x)=\mathcal{R}(y), \quad x \in \OOO,\ y \in \OOO,
\end{equation*}
then the function $v^{*}$ is piece-wise constant. A typical example for which this holds is the case of a Tonelli Lagrangian.
\end{remark}

\begin{remark}
 It is well known that $\{\vall\}_{\ld>0}$ can be proved to be equi-Lipschitz by adding different assumptions to \eqref{eq:f} and \eqref{eq:L}.
 We recall here three main alternative conditions that can be used for this purpose, namely controllability, strict dissipativity, and nonexpansivity.
 \begin{enumerate}
\item \underline{Controllability}: suppose there exists $r>0$ such that
\begin{equation*}
B(0,r)\subset \overline{\rm co}\;f(x,A)\qquad\forall \in\bO.
\end{equation*}
Then even $\{\val\}_{\ld>0}$ is equi-Lipschitz (but only $\{\vall\}_{\ld>0}$ is equibounded), see, for instance, \cite[Proposition~II.2.3]{BCD}.
\item \underline{Strict dissipativity}: suppose there exists a constant $K>0$ such that
\begin{equation*}
\sup_{A \in A}
\inf_{B \in A} \big\langle f(x,a)-f(y,b),x-y\big\rangle\leqslant -K|x-y|^2\qquad\forall\,x,y\in\R^n\,.
\end{equation*}
Also in this case $\{\vall\}_{\ld>0}$ is equi-Lipschitz, see \cite[Theorem~VII.1.6]{BCD} and \cite{AG}.
\item \underline{Nonexpansivity}: suppose  for all $x,y\in\overline{\Omega}$ and  $a\in A$,  there exists
 $b\in A$ such that
\begin{equation*}
\begin{cases}
\hspace{.cm}
(i)&\big\langle f(x,a)-f(y,b),x-y\big\rangle\leqslant 0
\\
(ii)& L(x, a) - L(y, b)  \leqslant  K | x - y |
\hspace{.cm}
\end{cases}
\end{equation*}
for some constant  $K\geqslant 0$. Then it is shown in \cite{CQ} that the family $\{\vall\}_{\ld>0}$ is equi-Lipschitz on $\bO$ (see also \cite{QR}).
\end{enumerate}

\end{remark}

One might wonder whether, under the assumptions of the above corollary, one could find a function $u_0\in \Lip(\bO)$ such that $(u_0,v_0)\in \mathcal S_{\Lip}(\bO)$.
Our next example shows that this is not always the case.

\begin{example} \label{exemple}
Let $A = [0,1]\subset \R$. The interval  $ \overline{\Omega }=[0,1]$ is invariant for the state equation
\begin{equation}\label{eq-ex}
X'(t)=  -\al (t)  X (t) \qquad(t\geqslant 0).
\end{equation}
Now, consider the cost $$L(x,a) = 1- \sqrt{a}\,x\qquad (a,x\in [0,1]). $$
One can easily check that  $\val (0) = \frac{1}{\ld}$.  Moreover, we  have that
 \[  \val (x)  := \frac{1}{\ld} - \frac{x}{2 \sqrt{\ld}} \qquad (x\in [0,1])  \]
 is the viscosity solution of  equation \eqref{EDP},  which takes the  form
$$ - \ld \val (x) + \max_{a \in [0,1]} \big[\, a\, x\,\val' (x) + \sqrt{a}\,x-1\,\big]  =0 \qquad (x\in [0,1]). $$
Indeed, the above function is a classical solution at any interior point  and the corresponding inequalities can be verified at $x=0,1$ by straightforward computations.

Observe that, although $ \vall $ converges uniformly to  $v_0 \equiv 1 $ on $[0,1]$,  the family
$$
 \,\val(x)-\frac{v_0(x)}{ \lambda  } = - \frac{x}{2 \sqrt{\la}} \qquad (x\in [0,1]) $$
fails to be bounded below uniformly in $ \la$.
Therefore,  Proposition~\ref{speed1}~$(b)$ implies that there exists no $u_0\in \Lip(\bO)$ such that  $(u_0,v_0)\in \mathcal S_{\Lip}(\bO)$.  \qed
\end{example}

\subsection{Asymptotic analysis}
For any $x\in\bO\,,\;u\in\R\,,\;p,q\in\R^n $ let us define
the {\em joint Hamiltonian} $J$  as
 \begin{equation}\label{F0}
 J (x, u, p, q) = \max_{a \in A}\;\min\big\{\langle f(x,a), p \rangle\;,\;\langle f(x,a),q \rangle -L(x,a)+u    \big\}
\end{equation}
We say that a pair $(v,w ) $ of continuous functions on $\bO$ is a viscosity supersolution of
\begin{equation}\label{eq:J}
J(x, v, -\nabla v , -\nabla w) =0
\qquad\text{on}\;\;\overline { \Omega }
\end{equation}
if for all $x\in\bO$ we have that
\begin{equation*}
J(x, v(x, -p, -q) \geq 0
\qquad\forall p\in \partial^-_\Omega v(x)\,,\;\forall q\in \partial^-_\Omega w(x)\,.
\end{equation*}

\begin{proposition}\label{speed2} Let  $v,w\in \Lip(\bO)$ be such that $(v , w )$ is a viscosity supersolution  of \eqref{eq:J}.
Then the following holds true for any $\ld>0$:
\begin{itemize}
\item[$(a)$] for any $c\in\R$ the function
\begin{equation}
\label{eq:def_w^c}
w_c(x):=\,\frac{ v(x)}\ld\,+ w(x)+c\quad (x\in\bO)
\end{equation}
satisfies
\begin{equation}
\label{eq:w^c}
\ld w_c+H(x,-\nabla w_c)\geq \ld( w+c)  \quad\text{on}\;\;\overline { \Omega }
\end{equation}
in the viscosity sense;
\item[$(b)$] $\val$ satisfies the upper bound
\begin{equation}
\label{eq:upperb}
 \val(x)-\,\frac{ v(x)}\ld\leq  w(x)+\| w\|_\infty\qquad\forall\,x\in\bO.
\end{equation}
\end{itemize}
\end{proposition}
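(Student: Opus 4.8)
The plan is to follow the proof of Proposition~\ref{speed1} almost verbatim, only exchanging the roles played there by subsolutions of $(H)$ with the supersolution property at hand, and exploiting that the $\min$ appearing in the joint Hamiltonian $J$ forces a \emph{single} control to serve both coupled inequalities. Concretely, at a point where $J(x,v(x),-p,-q)\ge 0$, the action $a^\ast\in A$ realizing the outer maximum satisfies, simultaneously, $\langle f(x,a^\ast),-p\rangle\ge 0$ and $\langle f(x,a^\ast),-q\rangle-L(x,a^\ast)+v(x)\ge 0$; the first keeps $v$ from increasing along a trajectory driven by $a^\ast$, while the second makes $w$ obey a sub-dynamic-programming inequality along that same trajectory.

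First I would translate the viscosity supersolution property of $(v,w)$ for \eqref{eq:J} into a statement about trajectories, using the supersolution counterpart of the monotonicity result of Proposition~\ref{mono} applied to $J$, together with a measurable selection turning the pointwise actions $a^\ast$ into an admissible control $\al$. The outcome I would aim for is: for every $x\in\bO$, every $\eps>0$ and every $t>0$ there is $\al\in\mathcal A$ with
\[
v\big(\traj(s)\big)\le v(x)+\eps\quad(0\le s\le t),
\]
\[
w(x)\ge w\big(\traj(t)\big)+\int_0^t\big[L\big(\traj(s),\al(s)\big)-v\big(\traj(s)\big)\big]\,ds-\eps .
\]
The error $\eps$ is the only structural difference from the subsolution case of Proposition~\ref{speed1}, where the corresponding inequalities hold for \emph{all} controls with no error; here it is the unavoidable cost of selecting one control.

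Next, to prove $(a)$, I would fix $\ld>0$ and $c\in\R$, put $w_c=\frac{v}{\ld}+w+c$, and substitute the two trajectory inequalities into the definition of $w_c$: bounding $\frac{v(x)}{\ld}\ge\frac{1}{\ld}\big(v(\traj(t))-\eps\big)$ by the first inequality and using the second for the $w$-part, and absorbing $\eps(1+\ld^{-1})$ into a fresh error, one obtains
\[
w_c(x)\ge w_c\big(\traj(t)\big)+\int_0^t\big[L\big(\traj(s),\al(s)\big)-v\big(\traj(s)\big)\big]\,ds-\eps\qquad(\forall\,t>0),
\]
with $\al$ depending on $x,\eps,t$. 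This is precisely the super-optimality inequality characterizing viscosity supersolutions of the stationary equation $v+H(x,-\nabla u)=0$ (whose running cost is $L-v$), so the same monotonicity result, used in the reverse direction, yields $v+H(x,-\nabla w_c)\ge 0$ on $\bO$ in the viscosity sense. Since $\ld w_c=v+\ld w+\ld c$, for any $p\in\partial^-_\Omega w_c(x)$ this gives $\ld w_c(x)+H(x,-p)=\big(v(x)+H(x,-p)\big)+\ld\big(w(x)+c\big)\ge\ld\big(w(x)+c\big)$, which is \eqref{eq:w^c}. Part $(b)$ then follows as in Proposition~\ref{speed1}: choosing $c=\|w\|_\infty$ makes the right-hand side of \eqref{eq:w^c} equal to $\ld(w+\|w\|_\infty)\ge 0$, so $w_c$ is a viscosity supersolution of \eqref{EDP}; since $\val$ is the unique viscosity solution of \eqref{EDP}, the comparison principle (established in the appendix) gives $\val\le w_c$ on $\bO$, i.e.\ \eqref{eq:upperb}.

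I expect the main obstacle to be this first step: building the dictionary between the viscosity supersolution of the \emph{coupled} system $J=0$ and the pair of trajectory inequalities holding along one \emph{common} control. Unlike the subsolution case, this requires a measurable selection (hence the loss of $\eps$), and one must verify that the $\min$ inside $J$ genuinely allows a single control to decrease $v$ and satisfy the sub-dynamic-programming inequality for $w$ at the same time, while also handling the nonsmooth points of $v$ and $w$; this is exactly the content that the supersolution analogue of Proposition~\ref{mono} for the joint Hamiltonian has to provide. Once it is in place, the remaining computations mirror those of Proposition~\ref{speed1} and are purely algebraic.
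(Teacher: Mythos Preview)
Your approach is essentially the same as the paper's: both translate the supersolution property for $J$ into a pair of trajectory inequalities holding along a \emph{common} control, add them to get $v+H(x,-\nabla w_c)\ge 0$, and finish via comparison. The only difference is that the paper invokes Proposition~\ref{mono}(i) directly to obtain the exact inequalities $v(x)\ge v(\traj(t))$ and $w(x)\ge w(\traj(t))+\int_0^t[L-v]\,ds$ with a single control valid for all $t\ge 0$ --- the viability-theoretic proof of that proposition (together with the convexity assumption~\eqref{eq:conv}) extends to the joint epigraph without approximation, so the $\eps$-slack you describe as ``unavoidable'' is in fact unnecessary.
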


\begin{proof}
Proposition~\ref{mono}~$(i)$  below ensures that
for all $x \in  \overline{\Omega }$ there exists $ \alpha_x=\alpha \in {\cal A}$ such that
\begin{align*}
\begin{cases}
 w (x)  \geq  w  \big(\traj (t)\big) + \int_{0}^{t}  \big[L\big(\traj (s), \al (s)\big)-v\big(\traj (s)\big)\big]\ ds
 \vspace{.2cm}
 \\
 v (x)  \geq  v  (\traj (t) )
\end{cases}
\qquad  \forall t \geq 0.
\end{align*}
Hence,
\begin{equation*}
w_c (x)  \geq  w_c \big(\traj (t)\big) + \int _0 ^t  \big[L\big(\traj (s), \al (s)\big)-v\big(\traj (s)\big)\big]\ ds
\qquad  \forall t \geq 0
\end{equation*}
which, again by Proposition~\ref{mono}~$(i)$,  yields
$
 v+H(x,-\nabla w_c) \geq  0\; \text{on}\; \overline{\Omega}\,.
$
Thus, \eqref{eq:w^c} follows recalling \eqref{eq:def_w^c}.  Moreover, choosing $ c=\|u\|_\infty$ in \eqref{eq:def_w^c} we deduce from \eqref{eq:w^c} that $w_{ c}$ is a supersolution of \eqref{EDP}. Therefore, $w_{ c}\geq \val$ on $\bO$ by comparison and this yields \eqref{eq:upperb}.
\end{proof}

\section{Hamilton-Jacobi system}
In this section, we introduce a certain system of Hamilton-Jacobi  equations and investigate its properties and links with the limit value of the  discounted optimal control problem as the discount factor $\la$ goes to zero. From now on,  we make the following assumption which ensures the existence of optimal controls for problem \eqref{eq:val}:
\begin{equation}\label {eq:convex}
\big\{ (f(x,a), L(x,a) +r )~:~a \in A\,,\;  r>0 \big\}\; \mbox{is a convex subset of $  \rn \times \R $ for any $ x \in \rn$.}
\end{equation}

\subsection{Definition of the PDE system}
 
Let us consider  the system of Hamilton-Jacobi equations
\begin{equation*}
\begin{cases}
 h(x, -\nabla u ) =0
 \\
 u+H(x,-\nabla w) =0
 \\
 J(x, u, -\nabla u , -\nabla w) =0
\end{cases}
\eqno{(S)}
\end{equation*}
where $J$ is defined in \eqref{eq:J}. Again, $(S)$ is intended
to be satisfied in the viscosity sense by  a pair $(u , w ) $ of continuous functions on  $ \overline{\Omega}$.
Observe that, when the dynamics reduces to the differential equation $x'(t) = f(x(t))$, the third equation of $(S)$ follows from the first two.

\subsection{Properties of system $(S)$}
We start  by  proving the following uniqueness property.
\begin{proposition}\label{uniqueness}
Let $(u_1, w_1)$ and $(u_2,w_2) $ be solutions of $(S)$. Then $u _1 =u_2 $ on  $ \overline{\Omega}$.
\end{proposition}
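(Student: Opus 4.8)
The plan is to exploit the representation of solutions of $(S)$ in a way that pins down the first component $u$ independently of $w$. The key observation is that the pair of inequalities coming from the first two equations of $(S)$ — namely $h(x,-\nabla u)\leq 0$ and $u + H(x,-\nabla w)\leq 0$ — means precisely that $(w,u)\in\mathcal S(\bO)$ in the notation of the earlier definition (with the roles relabeled: ``$v$'' there is ``$u$'' here, ``$u$'' there is ``$w$'' here). By Proposition \ref{speed1}, applied to $(w_i,u_i)$, this yields for each $i=1,2$ the lower bound $w_i(x) - \|w_i\|_\infty \leq \val(x) - u_i(x)/\lambda$, i.e. $u_i(x)\leq \lambda\big(\val(x) - w_i(x) + \|w_i\|_\infty\big)$, for all $\lambda>0$ and all $x\in\bO$. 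Symmetrically, I expect the third equation $J(x,u_i,-\nabla u_i,-\nabla w_i)=0$, read as a supersolution statement, to place $(u_i,w_i)$ in the hypotheses of Proposition \ref{speed2}, giving the matching upper bound $\val(x) - u_i(x)/\lambda \leq w_i(x) + \|w_i\|_\infty$, i.e. $u_i(x)\geq \lambda\big(\val(x) - w_i(x) - \|w_i\|_\infty\big)$.

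From these two-sided bounds, divide by $\lambda$ and let $\lambda\downarrow 0$. Both $u_i(x)/\lambda - \val(x) + w_i(x)$ and its negative are controlled by $\|w_i\|_\infty$, a constant independent of $\lambda$; more usefully, multiplying through we get $|u_i(x) - \lambda\val(x) + \lambda w_i(x)| \leq \lambda\|w_i\|_\infty$, so as $\lambda\to 0$ we obtain $u_i(x) = \lim_{\lambda\to 0}\lambda\val(x)$ for every $x$, provided that limit exists. The cleaner route, which avoids assuming convergence of $\lambda\val$ a priori, is to combine the bound for $u_1$ with the bound for $u_2$ directly: from $u_1(x)\leq \lambda(\val(x)-w_1(x)+\|w_1\|_\infty)$ and $u_2(x)\geq \lambda(\val(x)-w_2(x)-\|w_2\|_\infty)$ we get
$$
u_1(x) - u_2(x) \leq \lambda\big(w_2(x) - w_1(x) + \|w_1\|_\infty + \|w_2\|_\infty\big)\qquad(x\in\bO,\ \lambda>0).
$$
Letting $\lambda\downarrow 0$ on the right-hand side (the bracket is a fixed continuous function, hence bounded on the compact $\bO$) yields $u_1\leq u_2$ on $\bO$. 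Interchanging the roles of $(u_1,w_1)$ and $(u_2,w_2)$ gives the reverse inequality, hence $u_1 = u_2$.

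The main obstacle I anticipate is verifying that the third equation of $(S)$, written as a viscosity equality, does supply the \emph{supersolution} property of $(u_i,w_i)$ for the joint Hamiltonian $J$ in exactly the form demanded by Proposition \ref{speed2} — that is, checking that the subdifferential bookkeeping in Definition \ref{visc}$(a)$ and the definition of supersolution of \eqref{eq:J} line up, and that the Lipschitz regularity hypothesis of Proposition \ref{speed2} is met (which should follow since any continuous viscosity solution of $h(x,-\nabla u)=0$ together with $u+H(x,-\nabla w)=0$ is automatically Lipschitz, the Hamiltonian $H$ being coercive-free but the structure forcing Lipschitz bounds via the bounded dynamics $f$). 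A secondary point to be careful about is the direction of the inequality in $(S)$: since $(S)$ asks for equalities, both the subsolution estimates feeding Proposition \ref{speed1} and the supersolution estimates feeding Proposition \ref{speed2} are available simultaneously, which is exactly what makes the squeeze work; one must simply make sure the hypotheses of those two propositions are invoked with the correct pairing of arguments.
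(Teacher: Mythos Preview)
Your route is genuinely different from the paper's. The paper argues directly at the trajectory level: by Lemma~\ref{L3} together with the supersolution half of the third equation of $(S)$ (via Proposition~\ref{mono}\,(i)), for each $x$ there is a control $\al_x$ along which both monotonicity inequalities for $(u_1,w_1)$ become \emph{equalities}; feeding that same control into the inequalities for $(u_2,w_2)$ and subtracting gives
\[
w_2(x)-w_1(x)\;\leq\; t\big(u_1(x)-u_2(x)\big)\;+\;w_2\big(X^x_{\al_x}(t)\big)-w_1\big(X^x_{\al_x}(t)\big),
\]
and dividing by $t\to\infty$ yields $u_1(x)\geq u_2(x)$. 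Your argument instead pipes both solutions through the discounted value $\val$ and lets $\lambda\downarrow 0$; the two limits $t\to\infty$ and $\lambda\to 0$ are close cousins. What your route buys is that it stays at the PDE/comparison level with no explicit trajectories; what the paper's route buys is that it needs no Lipschitz regularity on $(u_i,w_i)$, only continuity.

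That last point is where your plan has a genuine gap. Propositions~\ref{speed1} and~\ref{speed2} are stated for Lipschitz data, and your claim that solutions of $(S)$ are automatically Lipschitz ``via the bounded dynamics $f$'' is not correct: without coercivity of $h$ (or $H$) in the gradient variable---precisely the hypothesis the paper is designed to avoid---a continuous viscosity solution of $h(x,-\nabla u)=0$ need not be Lipschitz (e.g.\ if $f$ is independent of $a$, the equation constrains $\nabla u$ only in the single direction $f(x)$). The repair is not hard: the \emph{proofs} of Propositions~\ref{speed1} and~\ref{speed2} go through Proposition~\ref{mono}, which requires only BUC functions; equivalently, you can appeal directly to the argument of Theorem~\ref{speed}, which produces the two-sided bound $u_i/\lambda + w_i - \|w_i\|_\infty \leq \val \leq u_i/\lambda + w_i + \|w_i\|_\infty$ for continuous solutions of $(S)$ without any Lipschitz assumption. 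Once you have that, your squeeze $u_1-u_2\leq \lambda\cdot(\text{bounded})\to 0$ is clean.
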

\noindent Notice that no uniqueness is claimed for the second component of the solution to  $(S)$. Next, we provide an immediate consequence of Proposition \ref{mono} and, for this, we omit the proof here. 

\begin{lemma}\label{L3} Let $(u,w) $ be a solution of $(S)$. Then for all $ x \in  \overline{\Omega}$ and  $ \al \in {\cal A} $
\begin{equation}\label{l3}
\begin{cases}
 (i)
 &
 u(x) \leqslant  u ( \traj (t) )
 \\
 (ii)
 &\displaystyle
 w(x) \leqslant  \int _0 ^t \big[L\big( \traj (s), \al (s)\big) - u ( \traj (s))\big] ds + w ( \traj (t))
 ]
\end{cases}
\quad\forall  t \geq 0.
\end{equation}
\end{lemma}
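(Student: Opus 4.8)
The strategy is to read off the two inequalities in \eqref{l3} from the first two equations of system $(S)$ by invoking the monotonicity properties recorded in Proposition~\ref{mono}, exactly as was done in the proof of Proposition~\ref{speed1} and Proposition~\ref{speed2}. Let $(u,w)$ be a solution of $(S)$. Then, in particular, $(u,w)$ is a viscosity \emph{subsolution} of the pair
\[
h(x,-\nabla u(x))\leq 0,\qquad u + H(x,-\nabla w(x))\leq 0\qquad\text{on}\;\;\overline{\Omega},
\]
i.e.\ $(u,w)\in\mathcal S(\bO)$. Since $u,w$ are continuous on the compact set $\overline{\Omega}$ and (being viscosity solutions of first–order equations with the standing Lipschitz data \eqref{eq:f}, \eqref{eq:L}) are Lipschitz, we actually have $(u,w)\in\mathcal S_{\Lip}(\bO)$, so Proposition~\ref{mono}~$(ii)$ applies.

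First I would apply Proposition~\ref{mono}~$(ii)$ to the inequality $h(x,-\nabla u(x))\leq 0$. By definition $h(x,p)=\max_{a\in A}\langle f(x,a),p\rangle$, so $h(x,-\nabla u)\leq 0$ is the Hamilton–Jacobi inequality associated with zero Lagrangian; Proposition~\ref{mono}~$(ii)$ then yields, for every $x\in\overline{\Omega}$, every $\al\in\mathcal A$ and every $t\geq 0$,
\[
u(x)\leq u\big(\traj(t)\big),
\]
which is \eqref{l3}$(i)$. Next I would apply Proposition~\ref{mono}~$(ii)$ to the second equation of $(S)$, namely $u + H(x,-\nabla w)\leq 0$, viewing $u$ as a (state–dependent) shift of the Lagrangian: here the relevant running cost along a trajectory is $L\big(\traj(s),\al(s)\big)-u\big(\traj(s)\big)$. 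Proposition~\ref{mono}~$(ii)$ then gives, for every $x$, every $\al$ and every $t\geq 0$,
\[
w(x)\leq \int_0^t\big[L\big(\traj(s),\al(s)\big)-u\big(\traj(s)\big)\big]\,ds + w\big(\traj(t)\big),
\]
which is \eqref{l3}$(ii)$. This completes the argument.

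The only genuine point requiring care — and the place I would expect the referee to look — is the reduction to $\mathcal S_{\Lip}(\bO)$, i.e.\ checking that a continuous viscosity solution $(u,w)$ of $(S)$ really has Lipschitz components so that the appendix result Proposition~\ref{mono}~$(ii)$ is applicable in the precise form stated; alternatively, one notes that Proposition~\ref{mono} is formulated for general continuous sub/supersolutions (as its uses in Propositions~\ref{speed1} and \ref{speed2} suggest), in which case no regularity reduction is needed and the two displays follow immediately. Since the excerpt explicitly says this lemma is ``an immediate consequence of Proposition~\ref{mono}'', I would present it as a two–line deduction: invoke Proposition~\ref{mono}~$(ii)$ once for each of the first two equations of $(S)$, reading off \eqref{l3}$(i)$ and \eqref{l3}$(ii)$ respectively, and remark that the third equation of $(S)$ plays no role here.
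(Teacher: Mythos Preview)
Your proposal is correct and matches the paper's approach: the paper explicitly omits the proof, stating that the lemma ``is an immediate consequence of Proposition~\ref{mono}'', and your two applications of Proposition~\ref{mono}~$(ii)$ (with $\ell\equiv 0$ for \eqref{l3}$(i)$ and $\ell=L-u$ for \eqref{l3}$(ii)$) are precisely the intended deduction.

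One minor correction to your side remark: Proposition~\ref{mono} is stated for BUC functions, so continuity on the compact $\overline{\Omega}$ already suffices for $\theta$ itself; the reason Lipschitz regularity of $u$ genuinely enters is through hypothesis~\eqref{eq:ell}, since the running cost $\ell(x,a)=L(x,a)-u(x)$ used for \eqref{l3}$(ii)$ must be Lipschitz in $x$. This is a real technical point (which the paper also glosses over), but it is not about the applicability of Proposition~\ref{mono} to $w$ as a BUC function.
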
  
\begin{proof}[Proof of Proposition \ref{uniqueness}.]
Fix any $ x \in  \overline{\Omega}$ and take $ \al _x$ yielding equalities in \eqref{l3}  for $(u_1, w_1 )$.  Then, by \eqref{l3}-$(ii)$  applied to $ (u_2,w_2) $,  for all $ t \geq 0$ we have that
\begin{eqnarray*}
w_2 (x) -w_1 (x) \leqslant  \int _0 ^t \big[L\big( X^{x}_{\al _x} (s) , \al _x (s)\big) - u_2( X^{x}_{\al _x} (s)\big] ds + w_2  ( X^{x}_{\al _x}(t))\\
-\int _0 ^t\big[ L( X^{x}_{\al _x} (s), \al _x (s)) - u_1( X^{x}_{\al _x} (s)) \big]ds - w_1 ( X^{x}_{\al _x} (t)) \\
= \int _0 ^t  \big[u_1( X^{x}_{\al _x} (s)) - u_2( X^{x}_{\al _x} (s))\big] ds + w_2  ( X^{x}_{\al _x} (t)) - w_1 ( X^{x}_{\al _x} (t)).
\end{eqnarray*}
Since, by
\eqref{l3}-$(i)$,
\begin{equation*}
u_1( X^{x}_{\al _x} (s))  = u_1 (x)\quad\text{and}\quad u_2( X^{x}_{\al _x} (s))  \geq u_2 (x),
\end{equation*}
we conclude that
\begin{eqnarray*}
w_2 (x) -w_1 (x) \leqslant  t \big[u_1(x) - u_2 (x) \big] + w_2  ( X^{x}_{\al _x} (t)) - w_1 ( X^{x}_{\al _x} (t))\quad\forall  t \geq 0.
\end{eqnarray*}
Now, dividing by $t$ and letting $ t \to + \infty$ we obtain that $u_1 (x) \geqslant  u_2 (x)$ because $ w_1$ and $w_2$ are bounded.
The above argument is symmetric with respect to $u_1$ and $u_2$. So  $u_1 =u_2$.
\end{proof}

Solutions of $(S)$, whenever they exist, provide bounds for the speed of convergence of $\{\vl\}_{\lambda > 0}$. To do so, we introduce the function 
\begin{equation}\label{eq:wl}
\wl(x)=   \,\frac{\lambda \val(x)-v^{*}(x)}{ \lambda  } \qquad (x \in  \overline{\Omega})
  \end{equation}
where $v^{*}$ is given by \eqref{R1}.

\begin{theorem}[{\bf Rate of convergence}]\label{speed} Let  $(u , w ) $ be a solution   of $(S)$. Then for any $\lambda >0$
  \begin{equation}\label{eq:speed}
  w(x) - \| w  \| _ \infty \leqslant  \,\wl(x) \,  \leqslant   w (x) + \| w  \| _ \infty \quad\forall x \in  \overline{\Omega}.
  \end{equation}
\end{theorem}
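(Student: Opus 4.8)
The plan is to first prove the two‑sided estimate of \eqref{eq:speed} with $u$ in place of $v^*$, i.e.
\[
w(x)-\|w\|_\infty\ \le\ \val(x)-\frac{u(x)}{\lambda}\ \le\ w(x)+\|w\|_\infty\qquad(x\in\bO),
\]
and then to identify $u$ with $v^*$, so that the middle term is exactly $\wl(x)$. Throughout I use that the components $u,w$ of a solution of $(S)$ lie in $\Lip(\bO)$ (this is the natural setting for $(S)$, and in any case is what lets us invoke Propositions~\ref{speed1} and~\ref{speed2}).

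For the estimate, note that the first two equations of $(S)$ say exactly that the pair $(w,u)$ — with $w$ in the first slot — lies in $\mathcal S_{\Lip}(\bO)$, so Proposition~\ref{speed1}$(b)$ applied with $(\bar u,\bar v)=(w,u)$ gives the lower bound $w(x)-\|w\|_\infty\le\val(x)-u(x)/\lambda$. The third equation of $(S)$ makes $(u,w)$ a viscosity supersolution of \eqref{eq:J}, so Proposition~\ref{speed2}$(b)$ applied to $(u,w)$ gives the upper bound $\val(x)-u(x)/\lambda\le w(x)+\|w\|_\infty$. Combining the two yields the displayed estimate. (Equivalently one can bypass Propositions~\ref{speed1}--\ref{speed2}: Lemma~\ref{L3} supplies, for all $\al\in\mathcal A$ and $t\ge0$, $u(x)\le u(\traj(t))$ and $w(x)\le\int_0^t[L(\traj(s),\al(s))-u(\traj(s))]\,ds+w(\traj(t))$, while the $J$‑equation produces — as in the proof of Proposition~\ref{uniqueness} — a control $\al_x$ reversing both inequalities; an integration by parts, using $\int_0^\infty e^{-\lambda s}L\,ds=\lambda\int_0^\infty e^{-\lambda s}\big(\int_0^sL\big)\,ds$ together with the bounds $w(x)-\|w\|_\infty+s\,u(x)\le\int_0^sL\le w(x)+\|w\|_\infty+s\,u(x)$ — valid along arbitrary $\al$ and along $\al_x$, respectively — and then taking the infimum over $\al$, gives the same conclusion.)

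It remains to identify $u$ and $v^*$. The estimate rearranges to $|\lambda\val(x)-u(x)-\lambda w(x)|\le\lambda\|w\|_\infty$, hence $\|\lambda\val-u\|_\infty\le 2\lambda\|w\|_\infty\to0$, so $\lambda\val\to u$ uniformly on $\bO$ as $\lambda\downarrow0$. Since $u\in\Lip(\bO)$, Theorem~\ref{representationI} applied to the cluster point $u$ gives $u=v^*$ on $\bO$, with $v^*$ the function in \eqref{R1}. (More directly: $v^*\le u$ since for every $(\bar u,\bar v)\in\mathcal S_{\Lip}(\bO)$ Proposition~\ref{speed1}$(b)$ gives $\bar v(x)\le\lambda\val(x)+\lambda(\|\bar u\|_\infty-\bar u(x))\to u(x)$; and $u\le v^*$ since, along $\lambda_j\downarrow0$, Proposition~\ref{pr:cluster_gives_subsolution} puts $(V_{\lambda_j},u-\epsilon_j)$ in $\mathcal S_{\Lip}(\bO)$ — its first component being Lipschitz — so $u-\epsilon_j\le v^*$, where $\epsilon_j=\|\lambda_jV_{\lambda_j}-u\|_\infty$.) With $u=v^*$, the two‑sided estimate is precisely \eqref{eq:speed}.

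I expect the identification $u=v^*$ to be the delicate point: the theorem does not assume in advance that $\{\lambda\val\}_{\lambda>0}$ converges, so this convergence must be read off from the estimate itself, and one has to keep careful track of the Lipschitz regularity of the objects involved (notably of $V_\lambda$, and of $u$) in order to place the relevant pairs in $\mathcal S_{\Lip}(\bO)$ and to apply Theorem~\ref{representationI}. By contrast, the passage from the undiscounted trajectory inequalities of Lemma~\ref{L3} to the discounted bounds, and the applications of Propositions~\ref{speed1} and~\ref{speed2}, are routine.
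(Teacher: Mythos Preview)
Your proof is correct and follows the same route as the paper: build the sub- and supersolutions $u/\lambda + w \mp \|w\|_\infty$ of \eqref{EDP} and apply comparison with $V_\lambda$. The paper writes this out directly (defining $w_0$ and $w^0$), while you invoke Propositions~\ref{speed1}(b) and~\ref{speed2}(b), which encode exactly the same construction.

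The one substantive difference is that you supply the identification $u=v^*$, which the paper's proof leaves implicit: the paper derives $w(x)-\|w\|_\infty\le V_\lambda(x)-u(x)/\lambda\le w(x)+\|w\|_\infty$ and simply asserts that this is \eqref{eq:speed}, tacitly using $u=v^*$. Your argument---reading off $\|\lambda V_\lambda-u\|_\infty\le 2\lambda\|w\|_\infty\to 0$ from the estimate, hence $\lambda V_\lambda\to u$ uniformly, then invoking Theorem~\ref{representationI}---closes that gap cleanly and is worth keeping.
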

\begin{proof}
Setting $$
w_{0}(x)=\frac{u(x)}{\lambda } + w(x) - \|w \| _ \infty \quad \big( x \in \OOO\big),  $$ we have that $w_{0}$ is a subsolution of \eqref{EDP}.  Since $ \val $ solves such an equation, by comparison we deduce that
$$\frac{u(x)}{\lambda } + w(x) - \|w\| _ \infty  \leqslant  \val(x), \quad \big( x \in \OOO\big).$$ This proves the lower bound in \eqref{eq:speed}.
On the other hand, setting
\begin{equation*}
	w^{0}(x)= \frac{u(x)}{\lambda } + w(x) + \|w  \| _ \infty, \quad \big(x \in \OOO\big),
\end{equation*}
by the definition of $J$ we deduce that $w^{0}$ is a supersolution of \eqref{EDP}. This yields the upper bound  in \eqref{eq:speed} and completes the proof.
\end{proof}

On the other hand, system $(S)$ may fail to have a solution even when $ \lambda \val $ converges uniformly as  $ \la \to 0 ^+ $.

\begin{example} We return to example \ref{exemple} where we have obtained

 \[   v^\star (x) =1 \mbox{ and } \val (x)  = \frac{1}{\ld} - \frac{x}{2 \sqrt{\ld}} \qquad (x\in [0,1]).  \]
Thus  the family
$$
\frac{\val(x)-v^ \star(x)}{\ld}  = - \frac{x}{2 \sqrt{\la}} \qquad (x\in [0,1]) $$
fails to be bounded uniformly in $ \la$. Hence 
  Proposition \ref{speed} implies that system ({\bf S}) has no solution. \qed
\end{example}

Now we give an easy example where we can deduce the convergence of $ \lambda V _ \lambda $ (which can be easily guessed) but moreover a speed of convergence of the form \eqref{eq:speed} which is harder to see at the first glance.

\begin{example} Let us consider an uncontrolled two dimensional linear system and a Lipschitz function $ L : \R^2 \to [0,1]$ which does not depend on controls, that is,
\[
\left(
\begin{array}{c}
 x' \\
  y'
\end{array}
\right)
=
\left(
\begin{array}{cc}
0  &   1  \\
-1  &   0
\end{array}
\right)
\left(
\begin{array}{c}
  x   \\
y
\end{array}
\right),
\]
for which $$\val (x,y ) :=   \int_{0}^ \infty e ^{- \lambda t}  L\big( X^x(t), Y^y(t)\big)dt.$$
Observe also that any ball $ \overline{\Omega} = B(0,R) $ is invariant under the above differential equation.
Now, define two functions $u,v:\R ^2\to\R$ in polar  coordinates  $(x,y) = (r \cos  \theta, r \sin \theta)$ as follows
\begin{eqnarray*}
u(x,y) = \frac{1}{2\pi} \int _ 0 ^{2 \pi} L(r \cos \sigma, r \sin \sigma ) d \sigma, \\
w(x,y) = \int _0 ^ \theta \big( - L(r \cos \sigma, r \sin \sigma ) + u (  r \cos \sigma, r \sin \sigma ) \big)\,d \sigma .
\end{eqnarray*}
A straightforward computation shows that $ (u, w) $ satisfies system $(S)$ (where, as previously observed in the uncontrolled case, the third equation is automatically satisfied).
Thus, in view of Proposition \ref{La} we have that $ \lambda \val $ converges uniformly to $u$ on $B(0,R$)  as $ \lambda \to 0 ^+ $. Moreover we have that
$$ | \lambda \val (x,y)
- u(x,y) | \leq 8 \pi \lambda , \, \, \forall (x, y ) \in B(0,R) ^2 , \, \forall \lambda >0 , $$ since clearly $ \| w \| _ \infty \leq 4 \pi $. \qed
\end{example}

In conclusion, we recall that it has  recently been proved in \cite{bib:CM} that, when controls act on acceleration, that is, for systems of the form
\begin{align*}
\begin{cases}
    \dot x(t) =& v(t)
     \\
    \dot v(t) =& u(t)
    \end{cases}
\end{align*}
with $u$ an $\R^{m}$-valued measurable function, even when the family $\{\lambda V_{\lambda}\}_{\lambda > 0}$ locally uniformly converges,  $\{ V_{\lambda}\}_{\lambda > 0}$ may not converge.

\begin{remark} 
 The above considerations concern the convergence of the Abel mean value   $\lim _ {\lambda \to 0 ^+ } \lambda V _ \lambda$. However, several other kinds of mean value may be of interest: in \cite{LQR},  it is proved that all possible cluster points are the same and a representation formula is given. However,  a Hamilton-Jacobi approach could be difficult  to develop for general means.
\end{remark}

\subsection{Asymptotic analysis}

In this section we assume that $\val$ is uniformly Lipschitz with respect to $\ld$ 
\begin{lemma} \label{wl1}
The function $\wl $  in \eqref{eq:wl} is continuous in $\overline {\Omega}$ and is a supersolution to
\begin{equation} \label{sursol-wl}
\ld\wl+ H ( x , -\nabla \wl) + v^{*}(x) = 0 \quad\text{on}\;\;\overline { \Omega }.
\end{equation}
\end{lemma}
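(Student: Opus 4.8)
The plan is to show that $\wl$ in \eqref{eq:wl} is continuous and a viscosity supersolution of \eqref{sursol-wl} by rewriting the Hamilton–Jacobi equation \eqref{EDP} for $\val$ in terms of $\wl$, and then passing through the relation $\wl = \val - v^*/\ld$. Continuity of $\wl$ is immediate: $\val \in \mathcal C(\bO)$ for every fixed $\ld>0$ by the discussion in \Cref{sec-prelim}, and $v^*\in \Lip(\bO)$ by hypothesis (it is the cluster point we are assuming exists, under the standing assumption that $\{\val\}_{\ld>0}$ is uniformly Lipschitz), so $\wl$ is a difference of continuous functions, hence continuous on $\overline\Omega$.

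For the supersolution property, I would start from the fact that $\val$ is the (unique) viscosity solution of \eqref{EDP}, in particular a supersolution: $\ld\val(x)+H(x,-p)\geq 0$ for all $p\in\partial^-_\Omega \val(x)$. The substitution $\val = \wl + v^*/\ld$ gives, at a point where $v^*$ is differentiable (or more carefully via the sum rule for subdifferentials, using that $v^*\in\Lip(\bO)$), $\partial^-_\Omega\val(x) = \partial^-_\Omega\wl(x) + \tfrac1\ld\nabla v^*(x)$ in the smooth-$v^*$ case; in general one tests with a smooth function $\varphi$ touching $\wl$ from below at $x$, so that $\varphi + v^*/\ld$ touches $\val$ from below — but $v^*/\ld$ need not be smooth, so the cleaner route is to use that $v^*$ is itself a viscosity \emph{solution} of $h(x,-\nabla v^*)=0$ by \Cref{La}, combined with monotonicity. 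Concretely, I would invoke \Cref{mono} (the appendix monotonicity result already used in the proofs of \Cref{speed1} and \Cref{speed2}) together with the trajectory inequality it yields: since $v^*$ solves $h(x,-\nabla v^*)=0$, it satisfies $v^*(x)\leq v^*(\traj(t))$ along every trajectory, with equality achieved for some optimal $\al_x$; and since $\val$ solves \eqref{EDP}, the dynamic programming principle gives $\val(x) = \inf_\al\{\int_0^t e^{-\ld s}L(\traj(s),\al(s))ds + e^{-\ld t}\val(\traj(t))\}$. Combining these and dividing appropriately should produce, for the function $\wl = \val - v^*/\ld$, the inequality characterizing supersolutions of \eqref{sursol-wl}, namely $\wl(x)\geq \wl(\traj(t)) + \int_0^t[\,\cdots\,]$ along an optimal trajectory, which by the converse direction of \Cref{mono}~$(i)$ translates into $\ld\wl + H(x,-\nabla\wl) + v^*(x)\geq 0$ in the viscosity sense.

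The main obstacle I anticipate is the lack of smoothness of $v^*$: one cannot simply substitute $\nabla v^*$ into the Hamiltonian, so the argument must be carried out at the level of viscosity test functions or, more robustly, at the level of the equivalent trajectory (sub/super-optimality) inequalities supplied by \Cref{mono}. The safest execution is therefore: (1) record that $\val$ satisfies the super-optimality inequality $\val(x)\geq \int_0^t e^{-\ld s}L\,ds + e^{-\ld t}\val(\traj(t))$ along \emph{every} $\al$ (this is the supersolution side of the DPP); (2) record from \Cref{La} that $v^*(x)\leq v^*(\traj(t))$ along every $\al$; (3) manipulate $e^{\ld t}$-weighted versions and take a first-order expansion in $t\to 0^+$ along an arbitrary control to extract the pointwise viscosity inequality, or equivalently quote \Cref{mono}~$(i)$ in the direction that converts the trajectory inequality for $\wl$ into the PDE supersolution property for \eqref{sursol-wl}. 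A minor additional point to check is that the zeroth-order term lands correctly: the $\ld\wl$ term comes from $\ld\val - v^*$, and indeed $\ld\wl + v^* = \ld\val$, so \eqref{sursol-wl} reads $\ld\val + H(x,-\nabla\wl) = 0$, which is exactly \eqref{EDP} with $\nabla\val$ replaced by $\nabla\wl = \nabla\val - \tfrac1\ld\nabla v^*$; this makes transparent why the supersolution property of $\val$ combined with $h(x,-\nabla v^*)\leq 0$ (the subsolution half of \eqref{h}, which contributes the needed sign when the maximizing $a$ is inserted into $H$) yields the supersolution property of $\wl$ for \eqref{sursol-wl}.
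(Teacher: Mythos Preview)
Your approach via trajectory inequalities and \Cref{mono} is exactly the paper's: apply \Cref{mono}~(i) to the supersolution $\val$ of \eqref{EDP}, apply \Cref{mono}~(ii) (with $\ell\equiv 0$ and discount $0$) to the subsolution $v^*$ of \eqref{h} to obtain $v^*(x)\leq v^*(\traj(t))$ for all controls, combine the two into a trajectory inequality for $\wl$ with running cost $L-v^*$, and convert back via the reverse direction of \Cref{mono}~(i). One slip to fix: in your step~(1) the quantifier is backwards---\Cref{mono}~(i) furnishes, for each $x$, the \emph{existence} of a control $\alpha$ along which $\val(x)\geq e^{-\lambda t}\val(\traj(t))+\int_0^t e^{-\lambda s}L\,ds$, not this inequality for every $\alpha$ (that would be the sub-optimality side, with the opposite sign). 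Since the inequality for $v^*$ does hold for every control, it holds in particular for the one supplied by $\val$'s supersolution property, and a single such control is precisely what the converse of \Cref{mono}~(i) requires to conclude that $\wl$ is a supersolution of \eqref{sursol-wl}.
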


\begin{proof}
Since $ \val $ is a viscosity supersolution to (\ref{EDP}), in view of Proposition \ref{mono} i)  (applied with $ \ell =L$) we know that for all $x \in  \overline{\Omega }$ there exists $\alpha \in {\cal A}  $ such that
for every $t \geq 0 $ we have $$
\val(x)  \geq e ^{- \lambda t } \val (\traj (t) ) + \int _0 ^t  e ^{- \lambda s}L  \big(\traj (s), \al (s)\big)ds.$$ Hence
\begin{eqnarray*}
\wl (x)  \geq  e ^{- \lambda t } \wl(\traj (t) ) +   \frac{1}{\lambda} \int _0 ^t  e ^{- \lambda s}L  \big(\traj (s), \al (s)\big) - v^{*} ( \traj (s)) ds \\ + \frac{1}{\lambda} [ -  v^{*}(x)  +   e ^{- \lambda t } v^{*} ( \traj (s)) +
\int _0 ^t  e ^{- \lambda s} v^{*} ( \traj (s)) ds
 ].
\end{eqnarray*} Observe that the  bracket last line  is nonnegative in view of Proposition \ref{mono} ii)  (applied with $ \ell =0$ and $ \lambda =0$), because  $v^{*}$ is a viscosity subsolution to \eqref{h}.  Consequently \[   \lambda \wl (x)  \geq  e ^{- \lambda t }  \lambda  \wl(\traj (t) ) +    \int _0 ^t  e ^{- \lambda s}L  \big(\traj (s), \al (s)\big) - v^{*} ( \traj (s)) ds,\] which means that $ \wl $ is a supersolution to  ( (\ref{sursol-wl}) again using   Proposition \ref{mono} i)  (applied with $ \ell =L -v^{*}  $).
\end{proof}

\begin{lemma} \label{vla}
The pair of continuous functions $(\vl,\wl) $  satisfies
\begin{equation} \label{sursol-vwl}
 J_\ld (x, \vl , \wl, -\nabla  \vl, - \nabla\wl ) \geqslant  \min \big\{ 0 ,  \ld\wl(x) \big\} =: \omega _ \la (x)
  \quad\text{on}\;\;\overline { \Omega }
  \end{equation}
  where we have set
   \begin{eqnarray}\label{Fla}
\lefteqn{J_\ld(x, u, w,  p, q)}
\\\nonumber
& =& \max_{a \in A}  \min
 \Big\{ \big\langle f(x,a ),p\big\rangle- \ld L (x,a)+ \ld u ~,~\big\langle f(x,a ),q\big\rangle- L (x,a)+ u+\ld w
 \Big\}
\end{eqnarray}
for all $ (x, u,w,p,q) \in  \rn \times \R \times \R \times   \rn \times   \rn$.
\end{lemma}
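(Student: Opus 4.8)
The plan is to derive the inequality \eqref{sursol-vwl} from the two facts already established for the pair $(\vl,\wl)$: first, that $\val$ is the viscosity solution of \eqref{EDP}, which by Proposition~\ref{mono}~(i) (applied with $\ell=L$) furnishes, for each $x\in\bO$, a control $\alpha\in\mathcal A$ along which $\val$ evolves with equality; and second, that $\wl$ is a supersolution of \eqref{sursol-wl} (Lemma~\ref{wl1}), hence by Proposition~\ref{mono}~(i) (applied with $\ell=L-v^*$) there is, for each $x$, a control along which $\wl$ satisfies the corresponding inequality. The key structural observation is the form of $J_\ld$ in \eqref{Fla}: it is a $\max_a\min\{\cdot,\cdot\}$ where the first slot encodes the Hamilton--Jacobi inequality for the ``$\ld$-rescaled'' unknown $\vl$ (that is, $\ld\vl + \langle f,-\nabla\vl\rangle - \ld L + \ld\vl$, which is exactly $\ld$ times the equation solved by $\val$) and the second slot encodes the inequality for $\wl$ coupled to $\vl$ (that is, $\langle f,-\nabla\wl\rangle - L + \vl + \ld\wl$, which is $\lambda\wl + \langle f, -\nabla \wl\rangle + v^*$ up to the identity $\vl = \lambda\wl + v^*$ — indeed $\vl = \lambda\wl + v^*$ exactly by \eqref{eq:wl}). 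So once both one-sided dynamic programming inequalities hold along a \emph{common} control, the $\max_a$ in $J_\ld$ is bounded below by the value at that control, and the $\min$ of the two slots is bounded below by the worse of the two right-hand sides, which is $\min\{0,\ld\wl(x)\}=\omega_\la(x)$.

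**First I would** make the dictionary between \eqref{sursol-vwl} and the trajectory inequalities precise. Using $\val = \wl + v^*/\lambda$ is wrong dimensionally; rather from \eqref{eq:wl} one has $\vl(x) = \ld\wl(x) + v^*(x)$ pointwise, and I would substitute this wherever $\vl$ appears in $J_\ld$. Then the first slot of $J_\ld(x,\vl,\wl,-\nabla\vl,-\nabla\wl)$ becomes $\langle f(x,a),-\nabla\vl\rangle - \ld L(x,a) + \ld\vl$, which is precisely $\ld$ times the quantity appearing in $\ld\val + H(x,-\nabla\val)=0$; since $\val$ is the (sub- and super-) solution, the natural move is to show $J_\ld \ge \min\{0,\ld\wl\}$ directly at the level of trajectories via Proposition~\ref{mono}, exactly as Lemma~\ref{wl1} was proved, rather than manipulating viscosity test functions by hand. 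Concretely: fix $x\in\bO$; by Proposition~\ref{mono}~(i) applied to $\val$ (supersolution of \eqref{EDP}) choose $\alpha_1$ giving equality in the discounted dynamic programming principle for $\val$; by Proposition~\ref{mono}~(i) applied to $\wl$ (supersolution of \eqref{sursol-wl}, via Lemma~\ref{wl1}) choose $\alpha_2$ giving the analogous inequality for $\wl$. The obstacle is that these are two different controls.

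**The main obstacle** is exactly this: $J_\ld$ demands a single action $a$ (or, in the dynamic-programming formulation, a single control $\alpha$) simultaneously witnessing both the $\vl$-inequality and the $\wl$-inequality, whereas Proposition~\ref{mono}~(i) applied separately to \eqref{EDP} and to \eqref{sursol-wl} produces two a priori distinct optimizers. The resolution is the convexity hypothesis \eqref{eq:convex} on $\{(f(x,a),L(x,a)+r):a\in A,\ r>0\}$, which is precisely why it was imposed in Section~4: it guarantees existence of an \emph{optimal} control for problem \eqref{eq:val}, and more to the point it lets one select a control that is simultaneously optimal for the value $\val$ and feasible for the auxiliary supersolution $\wl$ — one uses that the optimal trajectory of $\val$ realizes the supremum in $H(\traj(s),-\nabla\val)$ for a.e.\ $s$, and along that same trajectory the supersolution inequality for $\wl$ still holds (it holds for \emph{every} control in the supersolution sense, only the $\val$-equality needs the special control). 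Thus I would: (1) pick the optimal control $\bar\alpha$ for $\val$ at $x$ (existence from \eqref{eq:convex} and standard results, cf.\ \cite{BCD}); (2) write the exact dynamic programming identity for $\vl$ along $\bar\alpha$ and the $\ge$ inequality for $\wl$ along the same $\bar\alpha$, the latter because $\wl$ is a supersolution of \eqref{sursol-wl} so Proposition~\ref{mono}~(ii) (the ``for all controls'' direction) gives the one-sided inequality along \emph{any} control including $\bar\alpha$; (3) combine: subtracting/rearranging as in the proof of Lemma~\ref{wl1}, both bracketed quantities in the $\min$ defining $J_\ld$ are, along $\bar\alpha$, bounded below — the first by $0$ (it is an exact identity divided appropriately) and the second by $\ld\wl(x) - (\text{nonnegative correction})$ coming from the $v^*$-terms, which after using that $v^*$ is a subsolution of \eqref{h} (Proposition~\ref{La}) collapses to $\ge \ld\wl(x)$ or $\ge 0$ depending on sign; taking the $\min$ gives $\omega_\la(x)$, and since $\bar\alpha$ is one admissible choice inside $\max_a$, the claimed lower bound \eqref{sursol-vwl} follows. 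Finally, continuity of $(\vl,\wl)$ is immediate from the standing hypothesis that $\val$ is uniformly Lipschitz in $\ld$ together with $v^*\in\Lip(\bO)$, so the supersolution statement is meaningful in the sense defined before Proposition~\ref{speed2}.
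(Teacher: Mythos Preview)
The paper's proof is much simpler and purely algebraic at the Hamiltonian level: it substitutes $\nabla\vl=\ld\nabla\val$ and $\nabla\wl=\nabla\val-\tfrac{1}{\ld}\nabla v^*$ directly into $J_\ld$, so that both slots share the common quantity $g(a):=\langle f(x,a),-\nabla\val\rangle-L(x,a)+\ld\val$, the second slot carrying in addition the term $\tfrac{1}{\ld}\langle f(x,a),\nabla v^*\rangle$, which is nonnegative for \emph{every} $a$ by Proposition~\ref{La}. Dropping that term and using $\max_{a}g(a)=0$ (equation~\eqref{EDP}), one evaluates at the maximizing $a^*$ to get the two slots equal to $0$ and $\ld\wl$, and \eqref{sursol-vwl} follows. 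No trajectory arguments, no Lemma~\ref{wl1}, and no appeal to the convexity hypothesis~\eqref{eq:convex} are involved.

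Your trajectory-based route contains a genuine gap at step (2). You claim the $\geq$ inequality for $\wl$ holds along \emph{any} control by ``Proposition~\ref{mono}(ii) (the `for all controls' direction)'', but (ii) characterizes \emph{sub}solutions and yields $\leq$ for all controls; for a supersolution only (i) applies, and it furnishes merely \emph{some} control. Hence you have no justification for transporting the $\wl$-supersolution inequality to your chosen $\bar\alpha$. A repair is available --- inspecting the proof of Lemma~\ref{wl1} shows that the control witnessing the $\wl$-inequality is precisely the one produced by (i) for $\val$, so the two controls can indeed be taken equal --- but you have not made this observation, and it is the crux. Even after this fix, you would still need an ``if'' direction of Proposition~\ref{mono}(i) for the \emph{joint} Hamiltonian $J_\ld$ in order to pass from trajectory inequalities back to the viscosity supersolution statement \eqref{sursol-vwl}; the paper provides no such tool, and its direct substitution bypasses both difficulties entirely.
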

\noindent
Notice that $ \omega _ \la $ converges uniformly to zero as $ \la \to 0 ^+ $.

\smallskip
\begin{proof}
We have that
\begin{eqnarray*}
\lefteqn{J_\ld (x, \vl , \wl, -\nabla  \vl, - \nabla\wl )}
\\
&= &\max_{a \in A}  \min
 \Big\{ \big\langle f(x,a ),-\nabla\vl\big\rangle- \ld L (x,a)+\ld\vl ~,~\big\langle f(x,a ),-\nabla  \wl\big\rangle- L (x,a)+\vl+\ld \wl
 \Big\}
\\
&= &\max_{a \in A}  \min
 \Big\{\ld\big[ \big\langle f(x,a ),-\nabla\val\big\rangle-  L (x,a)+\ld\val \big]~,~\big\langle f(x,a ),-\nabla  \val\big\rangle- L (x,a)+\ld\val+\ld\wl
 \\
 &\quad&\hspace{10cm}+\,\frac 1\ld\, \langle f(x,a),\nabla v_0 \rangle \Big\}
 \\
&\geqslant&\max_{a \in A}  \min
 \Big\{\ld\big[ \big\langle f(x,a ),-\nabla\val\big\rangle-  L (x,a)+\ld\val \big]~,~\big\langle f(x,a ),-\nabla  \val\big\rangle- L (x,a)+\ld\val
 +\ld\wl
 \Big\}
\end{eqnarray*}
because $
 \langle f(x,a),\nabla v_0 \rangle \geqslant 0
$ for all $a \in A$
by Lemma~\ref{La}. Since, on account of \eqref{EDP},
\begin{equation*}
\max_{a \in A}\big\{ \big\langle f(x,a ),-\nabla\val\big\rangle-  L (x,a)+\ld \val\big\}=0\qquad(x \in \overline{\Omega})
\end{equation*}
we obtain \eqref{sursol-vwl}.
\end{proof}

\section{Examples} This section is devoted to control systems satisfying
\begin{equation} \label{HW}
\exists W \in C ( \overline{\Omega}) \mbox{ such that } H (x , \nabla W (x) ) \leqslant  0  \mbox{  in } \overline{\Omega}.
\end{equation}

Denote $ H_1 (x) := \max _{a \in A} L(x,a) $.

\begin{example}\label{ex1}
Let
  \begin{equation*} f(x,a) = f_0 (x) + \sum _{i=1} ^m a _i f_i(x)   \mbox{ with } A = \prod _{i=1} ^m [-1,1].\end{equation*}

Then
\begin{eqnarray}
H(x,p)=   \big\langle f_0(x),p \big\rangle   + \min _{a \in A} \{ \sum _{i=1} ^m a _i \big\langle f_i(x) ,p \big\rangle  + L(x,a) \}
\\ \leqslant   \big\langle f_0(x),p \big\rangle  -  \sum _{i=1} ^m a _i |\big\langle f_i(x) ,p \big\rangle|   +  \max _{a \in a} L(x,a).
\end{eqnarray}
Suppose now that there exists $i_0 \in \{ 1,2 , \ldots m\}$, $ \exists \delta >0 $, $ \exists g :  \rn \mapsto \R ^ d$ bounded and Lipschitz such that
\begin{equation}\label{E2} \left\{\begin{array}{l}
f_{i_0}(x)-g(x) = \nabla \phi (x) \\
 \big\langle f_0(x),p \big\rangle  + |\big\langle g(x) ,\nabla \phi (x) \big\rangle| + \delta \leqslant  \| \nabla \phi (x) \| ^2
\end{array} \right.
\end{equation}  for some $ \phi $ of class $C^1$.
Then $W(x) = k \phi (x) $ satisfies \eqref{HW} for $k$ large enough. Indeed
\begin{eqnarray*} H(x,p)  \leqslant
\big\langle f_0(x),p \big\rangle  -  \sum _{i=1} ^m a _i |\big\langle f_i(x) ,p \big\rangle|   + H_1(x) \\ \leqslant
k \{ \big\langle f_0(x),p \big\rangle  + |\big\langle g(x) ,\nabla \phi (x) \big\rangle|  -\| \nabla \phi (x) \| ^2 \} +  \| H_1 \| _ \infty  \\ \leq
- k \delta +   \| H_1 \| _ \infty
\end{eqnarray*} in view of \eqref{E2}. Therefore \eqref{HW} holds true for $k$ large enough. \qed
\end{example}

Now we discuss special cases of systems satisfying  \eqref{E2}

\begin{example} \label{ex2} Double Integrator.
In $ \R ^2 $ consider the system
\begin{equation*}
\begin{array} {lllll}
\left(\begin{array}{l}
x ' \\ y'
\end{array} \right)
& = &
\left(\begin{array}{l}
y \\ 0
\end{array} \right)
& + &
\al \left(\begin{array}{l}
0 \\ 1
\end{array} \right).
\end{array}
\end{equation*}
Here $A = [-1,1]$, $f_0 (x,y) = (y,0)$ and $f_1 (x,y) = (0,1)$. Then $f_1 (x,y) = \nabla \phi (x,y) $ where $ \phi (x,y) = y $ and $$
 \big\langle f_0, \nabla \phi  \big\rangle  - \| \nabla \phi \| ^2 = -1.$$
 So \eqref{E2} holds true  on $ \R ^2 $ with $ g =0 $ and  $ \delta =1$. \qed
\end{example}

\begin{example} \label{ex3} Harmonic Oscillator.
In $ \R ^2 $ consider the system
\begin{equation*}
\begin{array} {lllll}
\left(\begin{array}{l}
x ' \\ y'
\end{array} \right)
& = &
\left(\begin{array}{l}
y \\ -x
\end{array} \right)
& + &
\al \left(\begin{array}{l}
0 \\ 1
\end{array} \right).
\end{array}
\end{equation*}
Here $A = [-1,1]$, $f_0 (x,y) = (y,-x)$ and $f_1 (x,y) = (0,1)$.

Then $f_1 (x,y) = \nabla \phi (x,y) $ where $ \phi (x,y) = y $ and $$
 \big\langle f_0, \nabla \phi  \big\rangle  - \| \nabla \phi \| ^2 =-x  -1.$$
 So \eqref{E2} holds true  for $ x \geq \delta -1 $  ( $ \delta >0$ ) with $ g =0 $ and  $ \delta =1$. \qed

\end{example}

\begin{example} Nonholonomic integrator.
In $ \R ^3 $ consider the system
\begin{equation*}
\begin{array} {lllll}
\left(\begin{array}{l}
x _1' \\ x_2' \\ x_3 '
\end{array} \right)
& = & \al _1
\left(\begin{array}{l}
1\\ 0 \\ - x_2
\end{array} \right)
& + & \al _2
 \left(\begin{array}{l}
0 \\ 1 \\ - x_1
\end{array} \right).
\end{array}
\end{equation*}
Here $A = [-1,1]^2 $, $f_0 =0$,  $f_1 (x_1,x_2,x_3) = (1,0,x_2)$ and $f_2 (x_1,x_2,x_3) = (0,1, -x_1 )$.
Then  $f_1 = \nabla \phi +g $ with $\phi (x_1,x_2,x_3) = x_1$ and $g (x_1,x_2,x_3) = (0,0,x_2)$. Since $ \big\langle g, \nabla \phi  \big\rangle  =0 $, $ \| \nabla \phi \| =1 $, we have that   \eqref{E2} holds true with $ \delta =1$.
\qed
\end{example}

\section{Mean Payoff of Discrete Time Problems}
We now discuss the analogue of the present results, in the discrete time
setting. This applies in particular to Markov decision processes.
In this context, two classical tools have been developed
to characterize the mean payoff, in the special case of finite state
and action spaces. The first tool
is the notion of {\em invariant half-line} of dynamic programming operators,
which Kohlberg~\cite{kohlberg} showed to exist for polyhedral (two-player)
operators. Invariant half-lines also appeared, in the case of one player polyhedral
operators, in the monograph of Dynkin and Yushkevich~\cite{dynkin}. The other tool is the multichain
linear programming formulation of Denardo and Fox~\cite{denardofox}.
 We shall see that our earlier results extend these tools
to the Hamilton-Jacobi PDE setting, and that they also lead to extensions
of the invariant half-line approach to the discrete time case with
infinite state and action spaces.

It will be convenient to adopt an abstract setting.
Recall that an {\em AM-space with unit} is a Banach lattice $(X,\|\cdot\|,\leq)$
in which there is an element $e>0$, the unit, such that
$\|x\|= \inf\{t>0 \mid -te \leq x\leq te\}$. The
typical examples of AM-spaces with unit are $X=\cont(S)$,
the space of real valued continuous functions on a compact (Hausdorff) topological
space $S$, equipped with the sup-norm and with the standard partial
ordering of function, taking for the unit $e$
the constant function, or $L_\infty(\mu)$, the
space of equivalence classes of $\mu$-measurable functions
with respect to a positive measure $\mu$,
equipped with the essential sup norm.
In fact, by the Kakutani-Krein theorem, every Banach lattice is lattice
isomorphic and isometric to
a space $\cont(S)$, for a suitable choice of compact
topological space $S$, see
\cite[Theorem~8.29]{aliprantis}
or \cite[Chapter II, Theorem~7.4]{schaefer}.
Moreover, we shall shortly see that $S$ has a natural interpretation
as the state space of a game, in most applications.
So the reader
may like to think that $X=(\cont(S),\|\cdot\|_\infty,\leq)$ in the rest of this section,
although the explicit knowledge of the representation space
$S$ is not needed for several of our results.

We consider
an operator $T$ from $X$ to $X$ such that, for all $x,y\in X$ and $\lambda \in \R$,
\begin{align*}
x\leq y  &\implies T(x)\leq T(y)\\
T(\lambda e + x) &=  \lambda e + T(x)
\end{align*}
where $\leq$ denotes the standard partial order of $X$, and $e$ is the unit
vector of $X$. We shall say that $T$ is {\em order preserving}
when the former property holds, and that it {\em commutes with the addition of the unit} when the latter properties holds.
These two properties imply that $T$ is nonexpansive, meaning
that $\|T(x)-T(y)\| \leq \|x-y\|$. For the sake of brevity,
we make the following definition.
\begin{definition}
A map $T:X \to X$ is a
{\em Shapley operator} if it is order preserving
and commutes with the addition of the unit.
\end{definition}

The original example of Shapley operator arises when $X=\R^n$,
equipping every state $i\in S=[n]$ with finite sets of actions
$A_i$ and $B_i$,
payment maps $g_i: A_i \times B_i \to \R, (a,b)\mapsto g_i(a,b)$,
and a map $P_i: A_i \times B_i \mapsto \Delta([n])$, where
for any finite set $K$, $\Delta(K)$ denotes the set of probability
measures on $K$, which can be identified to nonnegative vectors $p=(p_k)_{k\in K}\in \R^K$ of sum $1$. Here, $(P_i(a,b))_j$ determines the probability
according to which the next state is chosen to be $j$, if the current state is
$i$. Then, the $i$th entry
of the corresponding Shapley operator is given by
\[
T_i(x) = \min_{\mu\in \Delta(A_i)}\max_{\nu\in \Delta(B_i)}\sum_{a\in A_i}\sum_{b\in B_i}
\big(g_i(a,b) + \sum_{j\in [n]}(P_i(a,b))_j x_j\big)\mu_a\nu_b \enspace .
\]
Such a $T$ is trivially order preserving and it commutes with the addition of the unit.
Conversely, order preserving maps that commute with the addition of the unit
are known to have somehow similar minimax representations, albeit with
possibly noncompact action spaces, see~\cite{Kol92,RS01b,1605.04518}.

The following discrete evolution equation
\[
v^k = T(v^{k-1}), \qquad k=1,2,\dots \qquad v^k\in X
\]
may be thought of as the analogue of the Hamilton-Jacobi evolution equation
\[
\partial_t V + H(x,-\nabla V)=0.
\]
We shall use the notation $T^k$ for the $k$th iterate of $T$, so that $v^k=T^k(v^0)$.

The analogue of the discounted value
function, defined as satisfying 
\[
\lambda V_\lambda + H(x,-\nabla V_\lambda) =0,
\]
for $\lambda>0$, is the vector $v_\discount\in X$, defined for a discount factor
$1-\discount$, with $0<\discount<1$  by
\[
T((1-\discount) v_\discount) = v_\discount
\]
The existence and uniqueness of $v_\discount$ follows from the nonexpansiveness
of $T$. We shall sometimes write $v_\discount=v_\discount(T)$ to emphasize
the dependence in $T$.
The correspondence between the discrete and continuous time setting
is gotten by setting $\exp(-\lambda)=1-\discount$.

The notion of invariant half-line was used by Kohlberg~\cite{kohlberg},
in a context in which $T:\R^n\to\R^n$ was only required to be nonexpansive.
Here, $T$ is also required to be order preserving, and so, it is convenient
to consider a one-sided version of the same notion, with an inequality
instead of an equality:
\begin{definition}
Suppose $T:X\to X$.
Then, a {\em sub-invariant} half-line is a map of the form $s\mapsto u+ s\eta$, $[0,\infty)\to X$, such that
\[
T(u+s\eta) \geq u+(s+1)\eta \enspace .
\]
We say that the vector $\eta$ {\em directs} the half-line.
The notion of {\em super-invariant} half-line is defined by reversing the latter inequality. A half-line is {\em invariant} if it is both sub- and super-invariant.
\end{definition}
It is useful to note that if $T$ is order preserving with a sub-invariant half--line $s\mapsto u+s\eta$,
\begin{align}
T^k(u) \geq u +k\eta, \qquad \forall k\geq 0
\label{e-pump}
\end{align}
whereas the equality in~\eqref{e-pump} holds unconditionally
on $T$ if $s\mapsto u+s\eta$ is an invariant half-line.
Observe that if $T$ is a Shapley operator,
it has always one sub-invariant half-line,
for instance, $s\mapsto -s\|T(0)\|$.

\begin{remark}
Special sub- or super-invariant half-lines arise when considering
{\em ergodic} sub- or super-eigenvectors $u$, with respect to a scalar
constant $\lambda$. The latter vectors satisfy $T(u) \geq u+ \lambda e$
or $T(u)\leq u+ \lambda e$. Then, the map $s\mapsto u +s \lambda e$ is readily
seen to be a sub- or super-invariant half-line.
\end{remark}

The following observation was made
by Kohlberg in a special case. It follows easily from the equality case in~\eqref{e-pump},
together with the nonexpansiveness of $T$. It shows that the existence
of an invariant half-line not only entails that the limit $\lim_k T^k(x)/k$
does exists and coincides with the offset $\eta$, but that $T^k(x)-k\eta$ stays
bounded as $k\to \infty$. A similar property holds true
for the vanishing discount limit.

\begin{proposition}[Compare with Corollary~2.2 of~{\cite{kohlberg}}]
  \label{prop-kohlberg}
Suppose that $T: X\to X$ is nonexpansive, and that it has
an invariant half-line. Then, for all $x\in X$, the sequence $(T^k(x) - k\eta)_{k\geq 0}$ remains bounded. In particular,
the limit $\lim_k T^k(x)/k$ does exist and coincides with the vector $\eta$.
Similarly, the sequence $v_\alpha-\alpha^{-1}\eta$ stays bounded as $\alpha\to 0^+$, and in particular $\lim_{\alpha\to 0^+} \alpha v_\alpha=\eta$.\hfill\qed
\end{proposition}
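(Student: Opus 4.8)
The plan is to reduce the statement to two elementary ingredients: the unconditional equality case of~\eqref{e-pump}, and the geometric-series estimate for the unique fixed point of a strict contraction.

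\emph{Iterates.} Since $s\mapsto u+s\eta$ is an invariant half-line, applying the defining equality successively at $s=0,1,\dots,k-1$ gives $T^{k}(u)=u+k\eta$ for every $k\geq 0$. Then, for an arbitrary $x\in X$, nonexpansiveness of $T$ yields $\|T^{k}(x)-k\eta\|=\|T^{k}(x)-T^{k}(u)+u\|\leq\|T^{k}(x)-T^{k}(u)\|+\|u\|\leq\|x-u\|+\|u\|$, so the sequence $(T^{k}(x)-k\eta)_{k\geq 0}$ is bounded, uniformly in $k$, by $\|x-u\|+\|u\|$. Dividing by $k$ gives $\|T^{k}(x)/k-\eta\|\leq(\|x-u\|+\|u\|)/k\to 0$, which is the claim on the Cesàro limit.

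\emph{Vanishing discount.} I would work with the map $T_{\alpha}(x):=T((1-\alpha)x)$, which is a $(1-\alpha)$-contraction (multiplication by $1-\alpha$ has modulus $1-\alpha$ and $T$ is nonexpansive), and whose unique fixed point is, by definition, $v_{\alpha}$. The key is to test $T_{\alpha}$ at the point $y_{\alpha}:=u+\alpha^{-1}\eta$ and check that the residual $\|T_{\alpha}(y_{\alpha})-y_{\alpha}\|$ is of order $\alpha$. Indeed $(1-\alpha)y_{\alpha}=u-\alpha u+(\alpha^{-1}-1)\eta$, so $T_{\alpha}(y_{\alpha})=T\big(u-\alpha u+(\alpha^{-1}-1)\eta\big)$; since $\alpha^{-1}-1\geq 0$, the invariant half-line gives $T\big(u+(\alpha^{-1}-1)\eta\big)=u+\alpha^{-1}\eta=y_{\alpha}$, and nonexpansiveness of $T$ then bounds $\|T_{\alpha}(y_{\alpha})-y_{\alpha}\|$ by $\alpha\|u\|$. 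The standard contraction estimate now gives $\|v_{\alpha}-y_{\alpha}\|\leq\alpha^{-1}\|T_{\alpha}(y_{\alpha})-y_{\alpha}\|\leq\|u\|$, hence $\|v_{\alpha}-\alpha^{-1}\eta\|\leq 2\|u\|$ for all $0<\alpha<1$, which is boundedness; and $\|\alpha v_{\alpha}-\eta\|\leq 2\alpha\|u\|\to 0$ as $\alpha\to 0^{+}$.

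There is no genuine obstacle here; the argument is essentially routine. The only point requiring a little care is the choice of the test point $y_{\alpha}$: although $T$ does not commute with the addition of $\eta$, the discrepancy coming from the extra term $-\alpha u$ in $(1-\alpha)y_{\alpha}$ is exactly of order $\alpha$, which is precisely what the contraction estimate (contraction modulus $1-\alpha$, hence amplification factor $\alpha^{-1}$) can absorb. This is where one uses that the half-line is invariant (equality, not merely a sub- or super-invariance inequality) and that $u\in X$, so that $\|u\|<\infty$.
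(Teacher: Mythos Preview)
Your proof is correct. The paper does not spell out a proof of this proposition (it is stated with a trailing \qed\ and only the remark that it ``follows easily from the equality case in~\eqref{e-pump}, together with the nonexpansiveness of $T$''), and your argument supplies exactly these ingredients: the identity $T^{k}(u)=u+k\eta$ together with nonexpansiveness for the iterates, and for the discounted value the natural contraction estimate applied to the map $x\mapsto T((1-\alpha)x)$ at the test point $u+\alpha^{-1}\eta$, which is the cleanest way to make the ``similarly'' part precise.
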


We next show that a ``one-sided'' version of Kohlberg's approach
holds for sub-invariant half-lines,
when $T$ is required in addition to be order preserving.
\begin{theorem}[Comparison principle]\label{th-1}
Suppose $T: X\to X$ is a Shapley operator,
and that $s\mapsto u+s\eta$ is a sub-invariant half-line of $T$.
Then, the map $\discount \mapsto v_\discount - \discount^{-1}\eta$
is bounded below as $\discount\to 0^+$, and the sequence
$(v^k - k^{-1}\eta)_{k\geq 0}$ is bounded below. In particular,
\begin{equation*}
\liminf_{\discount\to 0^+} \discount v_\discount \geq \eta \qquad \text{and } \qquad
\liminf_{k\to\infty} v^k/k \geq \eta \enspace .
\end{equation*}
\end{theorem}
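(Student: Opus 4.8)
I would produce, separately, an explicit lower bound for the undiscounted orbits $v^k=T^k(v^0)$ and one for the discounted values $v_\discount$, each built from the basepoint $u$ and the director $\eta$ of the given sub-invariant half-line, and then divide by $k$ (respectively multiply by $\discount$) to read off the two $\liminf$ inequalities. Throughout I would use only that $T$ is order preserving, commutes with additions of multiples of $e$, and is therefore nonexpansive, plus the monotone convergence remark~\eqref{e-pump}.

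\textbf{The orbits.} Since the half-line is sub-invariant, \eqref{e-pump} gives $T^k(u)\geq u+k\eta$ for every $k\geq 0$. Fixing an arbitrary $v^0$ and writing $\rho:=\|v^0-u\|$, nonexpansiveness of $T$ yields $v^k=T^k(v^0)\geq T^k(u)-\rho e\geq u+k\eta-\rho e$, so $v^k-k\eta\geq u-\rho e$ for all $k$: the sequence is bounded below by a fixed vector. Dividing by $k$ and using that $k^{-1}(u-\rho e)\to 0$ in norm, for every $\varepsilon>0$ one gets $v^k/k\geq \eta-\varepsilon e$ for all large $k$, hence $\liminf_k v^k/k\geq \eta$. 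This part is essentially immediate.

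\textbf{The discounted values.} Set $\beta:=1-\discount\in(0,1)$ and $\Phi(w):=T(\beta w)$. Since $T$ is order preserving and nonexpansive, $\Phi$ is order preserving and a $\beta$-contraction of $X$, so $v_\discount$ is its unique fixed point and $\Phi^n(w)\to v_\discount$ for every $w$. The heart of the argument is to guess a ``discounted sub-fixed-point'': I claim
$$
w_\discount:=u+\discount^{-1}\eta-\|u\|\,e \quad\text{satisfies}\quad \Phi(w_\discount)\geq w_\discount .
$$
To verify this I would expand $\beta w_\discount=\bigl(u+\tfrac{1-\discount}{\discount}\eta\bigr)-\discount u-(1-\discount)\|u\|e$, bound $-\discount u\geq-\discount\|u\|e$ using $u\leq\|u\|e$, apply $T$ (order preserving, commuting with additions of $e$), and finally invoke the sub-invariant inequality $T(u+s\eta)\geq u+(s+1)\eta$ with $s=\tfrac{1-\discount}{\discount}\geq 0$; the constants are arranged exactly so that the outcome is $w_\discount$ again. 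Granting the claim, monotonicity of $\Phi$ makes $\Phi^n(w_\discount)$ nondecreasing and $\geq w_\discount$, so passing to the limit (the positive cone being norm-closed) gives $v_\discount\geq w_\discount$, i.e. $v_\discount-\discount^{-1}\eta\geq u-\|u\|e$ for all $\discount\in(0,1)$. Then $\discount v_\discount=\eta+\discount\bigl(v_\discount-\discount^{-1}\eta\bigr)\geq\eta+\discount(u-\|u\|e)$, and letting $\discount\to 0^+$ yields $\liminf_{\discount\to 0^+}\discount v_\discount\geq\eta$ in the same sense as before.

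\textbf{Main obstacle.} The only genuinely delicate point is the choice of $w_\discount$: because $T$ does \emph{not} commute with scalar multiplication, one cannot feed $\beta w$ directly into the half-line inequality, and the mismatch between the scalars $\beta$ and $1$ has to be absorbed into the unit direction by combining $u\leq\|u\|e$ with the commutation of $T$ with additions of $e$. This is precisely why the natural bound carries the extra $-\|u\|e$ term, and why it is uniform in $\discount$. Everything else --- the contraction/fixed-point facts, the passage to the limit inside an inequality, and the translation of ``bounded below'' into the two $\liminf$ statements --- is routine.
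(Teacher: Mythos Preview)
Your argument is correct and follows the same core strategy as the paper: find a sub-fixed-point of $\Phi(w)=T((1-\discount)w)$, iterate using order preservation to get a monotone sequence converging to $v_\discount$, and handle the orbit case directly from~\eqref{e-pump} plus nonexpansiveness.

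The one noteworthy difference is in how the basepoint $u$ is absorbed. The paper introduces a small conjugation lemma: replacing $T$ by $T_u(x):=-u+T(u+x)$ turns the half-line into $s\mapsto s\eta$, and $\|v_\discount(T)-v_\discount(T_u)\|\leq 2\|u\|$, so it suffices to treat the case $u=0$, where the sub-fixed-point is simply $z=\discount^{-1}\eta$. You instead build the correction $-\|u\|e$ directly into the candidate $w_\discount=u+\discount^{-1}\eta-\|u\|e$ and check $\Phi(w_\discount)\geq w_\discount$ by hand. Your route is slightly more self-contained and in fact yields the marginally sharper bound $v_\discount-\discount^{-1}\eta\geq u-\|u\|e$; the paper's modular reduction has the advantage that the conjugation lemma is reusable and keeps the fixed-point computation clean. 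Either way the substance is identical.
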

Note in particular
that the notions of $\liminf$ and $\limsup$ are well defined
in Banach lattices~\cite{aliprantis}, and that
when $X=\cont(S)$, they agree with the the usual definitions
for functions.

To simplify the proof of \Cref{th-1}, it will be useful to consider, for all $u\in X$,
the conjugate map
\[
T_u (x):= -u + T( u + x) \enspace .
\]
The following lemma states some elementary properties of this map.
\begin{lemma}\label{lem-conj}
Let $T$ be a nonexpansive self-map of $X$, and let $u\in X$, then:
\begin{enumerate}
\item If $s\mapsto u+s\eta$ is a sub-invariant half-line of $T$, then
$s\mapsto s\eta$ is a sub-invariant half-line of $T_u$;
\item For all $x\in X$ and $k\geq 0$, $T_u^k(x) = -u+T^k(u+x)$;
\item $\|v_\discount(T)-v_\discount(T_u)\|\leq 2\|u\|$.
\end{enumerate}
\end{lemma}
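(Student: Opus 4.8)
The plan is to dispose of items (1) and (2) by directly unwinding the definitions, and to devote the real work to item (3).

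For (1), I would simply observe that sub-invariance of $s\mapsto u+s\eta$ for $T$ means $T(u+s\eta)\geq u+(s+1)\eta$ for every $s\geq 0$; subtracting $u$ from both sides and noting that the left-hand side is by definition $T_u(s\eta)$ gives $T_u(s\eta)\geq(s+1)\eta$, which is precisely sub-invariance of the half-line $s\mapsto 0+s\eta$ for $T_u$. No use of monotonicity or nonexpansiveness is needed here. For (2), I would induct on $k$: the case $k=0$ is the identity $T_u^0(x)=x=-u+T^0(u+x)$, and the inductive step is the telescoping computation $T_u^{k+1}(x)=T_u\big(T_u^k(x)\big)=-u+T\big(u+(-u+T^k(u+x))\big)=-u+T^{k+1}(u+x)$.

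For (3), I first record that $T_u$, being the composition of $T$ with two translations, is again nonexpansive, so both $v_\discount(T)$ and $v_\discount(T_u)$ are well defined and unique, as fixed points of the $(1-\discount)$-contractions $x\mapsto T((1-\discount)x)$ and $x\mapsto T_u((1-\discount)x)$ respectively. The naive move of comparing these two contractions directly only yields $\|v_\discount(T)-v_\discount(T_u)\|\leq 2\|u\|/\discount$, which is useless as $\discount\to 0^+$; this is the main obstacle. I would overcome it with a shift: setting $w:=v_\discount(T_u)+u$ and substituting into the fixed-point equation $v_\discount(T_u)=-u+T\big(u+(1-\discount)v_\discount(T_u)\big)$ shows that $w$ is the unique fixed point of the $(1-\discount)$-contraction $x\mapsto T\big(\discount u+(1-\discount)x\big)$. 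Comparing this contraction with $x\mapsto T((1-\discount)x)$, whose fixed point is $v_\discount(T)$, the two maps differ in norm by at most $\|\discount u\|=\discount\|u\|$ uniformly in $x$ (nonexpansiveness of $T$), so the elementary stability estimate for contractions — if $F,G$ are $\kappa$-contractions with $\sup_x\|F(x)-G(x)\|\leq\delta$, then their fixed points lie within $\delta/(1-\kappa)$, since $\|x_F-x_G\|=\|F(x_F)-G(x_G)\|\leq\kappa\|x_F-x_G\|+\delta$ — gives $\|w-v_\discount(T)\|\leq\discount\|u\|/\discount=\|u\|$. Finally $\|v_\discount(T_u)-v_\discount(T)\|\leq\|v_\discount(T_u)-w\|+\|w-v_\discount(T)\|=\|u\|+\|u\|=2\|u\|$, as claimed.

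I expect the only nontrivial point to be the shift $w=v_\discount(T_u)+u$ in item (3): without it one loses a factor $1/\discount$, whereas with it the $\discount$'s cancel and the bound becomes uniform in $\discount$. Everything else is routine bookkeeping, and I would keep the write-up brief.
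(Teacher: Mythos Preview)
Your proof is correct and matches the paper's approach. For items (1) and (2) the paper simply calls them ``trivial'' and omits the details you supply; for item (3) the paper also reduces to the estimate $\|v_\discount(T)-v_\discount(T_u)-u\|\leq\|u\|$, obtained by writing both sides via the respective fixed-point equations and applying nonexpansiveness directly --- this is exactly your shift $w=v_\discount(T_u)+u$, followed by your contraction-stability argument, just unpacked inline rather than stated as a general lemma.
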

\begin{proof}
We only show the last statement, the two first ones being trivial.  We have
\begin{align*}
\|v_\discount(T) - v_\discount(T_u)-u\|
& = \|T(\discount v_\discount(T))-T(u+\discount v_\discount(T_u))\|\\
& \leq \|\discount v_\discount(T) - u- \discount v_\discount(T_u)\|\\
& \leq \discount \| v_\discount(T) -u - v_\discount(T_u)\| + (1-\alpha)\|u\|
\end{align*}
and so, after simplifying,
\[
\|v_\discount(T) - v_\discount(T_u)+u\| \leq \|u\|
\]
from which the last statement follows.
\end{proof}

\begin{proof}[Proof of Theorem~\ref{th-1}]
We first show that $v_\discount -\discount^{-1}\eta$
is bounded below. Thanks to Lemma~\ref{lem-conj},
after replacing $T$ by $T_u$, we may assume that
the invariant half-line is of the form $s\mapsto s\eta$.

Let $z:= \discount^{-1}\eta$. Observe that $T((1-\discount) z)=
T((\discount^{-1}-1)\eta)\geq \discount^{-1}\eta =z$. Let $F(x)=T((1-\discount)x)$,
so that $z\leq F(z)$.
Since $T$ is order preserving, so is $F$, and so
$z\leq F(z) \leq F^2(z) \leq \dots\leq F^k(z)\to v_\discount$ as $k\to\infty$. We deduce that $v_\discount \geq \discount^{-1}\eta$, for all $\discount>0$, which
proves the part of the theorem concerning $v_\discount$.

The part concerning $v^k$ is simpler. Indeed, since $T$ is order
preserving, we deduce from the sub-invariant half-line property
that $T^k(u)\geq u + k \eta$ holds, so that the sequence $(T^k(u)-k\eta)_{k\geq 0}$ is bounded below, and since $T$ is non-expansive,
the sequence $(T^k(v^0)-k\eta)_{k\geq 0}$ must be bounded below
as well.
\end{proof}

The following is an immediate corollary of Theorem~\ref{th-1}.
\begin{corollary}\label{cor-1}
Suppose that $T$ is a Shapley operator $X \to X$,
and that it has both a sub-invariant and a super-invariant half-line with a common director vector $\eta$. Then, $v_\discount -\discount^{-1}\eta$ remains bounded as $\discount \to 0^+$. Moreover, the
same is true  of $v^k-k\eta$ as $k\to \infty$. In particular,
\[
\lim_{\discount\to 0^+} \discount v_\discount =
\lim_{k\to \infty} k^{-1} v^k = \eta \enspace .
\]
\end{corollary}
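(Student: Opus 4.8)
The plan is to derive Corollary~\ref{cor-1} by combining \Cref{th-1} with its ``dual'' version obtained by reversing all inequalities. First I would observe that the hypothesis splits into two halves that are handled separately: $T$ has a sub-invariant half-line directed by $\eta$, and $T$ has a super-invariant half-line directed by the same $\eta$. Applying \Cref{th-1} directly to the sub-invariant half-line gives that $\discount \mapsto v_\discount-\discount^{-1}\eta$ is bounded below as $\discount\to 0^+$ and that $(v^k-k^{-1}\eta)_{k\geq 0}$ is bounded below, hence $\liminf_{\discount\to0^+}\discount v_\discount\geq\eta$ and $\liminf_{k\to\infty}v^k/k\geq\eta$.

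For the matching upper bounds, the idea is to invoke \Cref{th-1} with $T$ replaced by the conjugate-type operator $S:=-R\circ T\circ R$ where $R$ is the reflection $x\mapsto -x$; equivalently, one checks directly that if $s\mapsto u+s\eta$ is a super-invariant half-line of $T$ then $s\mapsto (-u)+s(-\eta)$ is a sub-invariant half-line of $S$, that $S$ is again a Shapley operator (order preservation and commutation with the addition of the unit are both stable under this reflection conjugation), and that $v_\discount(S)=-v_\discount(T)$ and the iterates satisfy $S^k(v^0)=-T^k(-v^0)$. Then \Cref{th-1} applied to $S$ and the half-line directed by $-\eta$ yields $\liminf_{\discount\to0^+}\discount v_\discount(S)\geq -\eta$, i.e.\ $\limsup_{\discount\to0^+}\discount v_\discount(T)\leq\eta$, together with the corresponding statement that $v_\discount(S)-\discount^{-1}(-\eta)$ is bounded below, i.e.\ $v_\discount(T)-\discount^{-1}\eta$ is bounded above; and similarly for the iterates $v^k$.

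Combining the lower bound (sub-invariant half-line) and the upper bound (super-invariant half-line) gives that $v_\discount-\discount^{-1}\eta$ is both bounded below and bounded above, hence bounded, as $\discount\to0^+$, and likewise $v^k-k\eta$ is bounded as $k\to\infty$; dividing by $\discount^{-1}$ (resp.\ $k$) and passing to the limit in the Banach lattice forces $\lim_{\discount\to0^+}\discount v_\discount=\lim_{k\to\infty}k^{-1}v^k=\eta$. The only point requiring a little care --- and the one I would write out explicitly --- is the verification that the reflection conjugation $S=-R\circ T\circ R$ preserves the Shapley-operator structure and interacts correctly with the discounted fixed point ($v_\discount(S)=-v_\discount(T)$) and the iterates; everything else is an immediate application of \Cref{th-1} and the fact that $\limsup$, $\liminf$ and boundedness in $\cont(S)$ (more generally in a Banach lattice, cf.\ \cite{aliprantis}) behave as expected. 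I do not anticipate a genuine obstacle here: the corollary is a soft, symmetric consequence of the one-sided theorem, and ``compare with Corollary~2.2 of~\cite{kohlberg}'' signals exactly this two-sided packaging of the one-sided estimates.
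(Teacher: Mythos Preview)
Your argument is correct. The paper gives no proof for the corollary beyond calling it ``an immediate corollary of Theorem~\ref{th-1}'', and your reflection-conjugation $S(x)=-T(-x)$ is precisely the clean way to turn a super-invariant half-line of $T$ into a sub-invariant half-line of $S$ so that Theorem~\ref{th-1} can be applied verbatim to produce the matching upper bounds; the verifications you flag (that $S$ is again a Shapley operator, that $v_\discount(S)=-v_\discount(T)$, and that $S^k=-T^k\circ(-\mathrm{id})$) are routine and exactly what is needed.

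One small cosmetic remark: you reproduce the paper's typo ``$(v^k-k^{-1}\eta)_{k\geq 0}$'' from the statement of Theorem~\ref{th-1}; this should of course read $(v^k-k\eta)_{k\geq 0}$, as the proof of that theorem and your own final paragraph make clear.
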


\section{Sub-invariant half-lines of Bellman operators}
The previous results apply to any Shapley operators
(two player problems).
We now consider the more special situation in which $T$ originates from a one player (minimization) problem. To do so, it is convenient to make
use of the representation space $S$.
For $i\in S$ and $x\in X$, we denote by
 $x_i$ the $i$ coordinate of $x$, so that $x$ is the continuous function
sending $i$ to $x_i$, and we denote by $T_i$ the $i$ coordinate
map of $T$, i.e., $T_i(x):=(T(x))_i$. We require each coordinate $T_i$ of $T$ to be a concave function from $X \to \R$.
We shall say that $T$ is {\em concave} from $X$ to $X$ when this is the case.
We make the following definition.
\begin{definition}\label{def-bellman}
An operator $T:X \to X$ is a {\em Bellman operator}
if it is both a Shapley operator and if it is concave.
\end{definition}
We shall see shortly that $T$ has a representation as a
dynamic programming operator of a one player game,
so that this definition coincides with the classical
one.

We denote by $\bounded(S)\supset X=\cont(S)$
the space of bounded functions from $S$ to $\R$.
This space can be equipped with the topology of uniform convergence,
and with the topology of pointwise convergence, which coincides with the order
topology~\cite[Lemma~8.17]{aliprantis}. To avoid any ambiguity, the symbol $\lim$ will refer to the uniform convergence topology, whereas the  symbol $\olim$ will refer to the order (or pointwise) topology. The convergence in the uniform convergence topology implies the convergence in the order topology, but not vice versa.
\begin{lemma}
Suppose that $T:X\to X$ is concave.
Then, the recession function of $T$, which is the map
$\hat{T}$ sending $y\in X$ to
\begin{align}
\hat{T}(y):=\olim_{s\to\infty} s^{-1}T(sy) \in \bounded(S)
\label{e-olim}
\end{align}
does exist, and it satisfies, for all $x\in X$,
\begin{align}
\hat{T}(y) = \inf_{s>0} s^{-1}(T(x+sy)-T(x)) \enspace .
\label{e-inf}
\end{align}
\end{lemma}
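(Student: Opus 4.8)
The plan is to establish the two displayed identities in sequence, exploiting concavity of $T$ in the form of the monotonicity of difference quotients. First I would fix $x\in X$ and $y\in X$ and show that the function
\[
s\longmapsto \frac{T(x+sy)-T(x)}{s},\qquad s>0,
\]
is nonincreasing in $s$ with respect to the order of $X$. This is the standard concavity argument applied coordinatewise: for $0<s_1<s_2$, writing $x+s_1 y$ as a convex combination of $x$ and $x+s_2 y$, namely $x+s_1y=(1-s_1/s_2)x+(s_1/s_2)(x+s_2y)$, concavity of each coordinate $T_i$ gives $T_i(x+s_1y)\geq (1-s_1/s_2)T_i(x)+(s_1/s_2)T_i(x+s_2y)$, which rearranges exactly to $s_1^{-1}(T(x+s_1y)-T(x))\geq s_2^{-1}(T(x+s_2y)-T(x))$ in every coordinate, hence in the order of $X$. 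Therefore the infimum over $s>0$ in \eqref{e-inf} coincides with the pointwise (order) limit as $s\to\infty$, provided that limit exists in $\bounded(S)$.

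Next I would check that the limit in \eqref{e-olim} indeed lands in $\bounded(S)$ and is independent of the base point $x$. For boundedness below: since $s\mapsto s^{-1}(T(x+sy)-T(x))$ is nonincreasing, it suffices to produce \emph{one} pointwise lower bound valid for all large $s$. Using that $T$ commutes with the addition of the unit together with nonexpansiveness, one has $T(x+sy)\geq T(x+sy)$ trivially; the useful estimate is $\|T(x+sy)-T(x)-s\,\hat{T}(y)\|$-type control, but more directly: for the constant sequence bound, note $T(x+sy)\geq T(x+s(\inf_i y_i)e)=s(\inf_i y_i)e+T(x)$ by order preservation and commutation with the unit, so $s^{-1}(T(x+sy)-T(x))\geq (\inf_i y_i)e$ uniformly in $s$. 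A monotone sequence of functions bounded below converges pointwise to a function bounded below, and the limit is bounded above because $T$ is nonexpansive (so $\|T(x+sy)-T(x)\|\leq s\|y\|$, giving $\|s^{-1}(T(x+sy)-T(x))\|\leq\|y\|$, hence $\|\hat T(y)\|\leq\|y\|$). Independence of $x$: if $x'\in X$, then $\|T(x+sy)-T(x'+sy)\|\leq\|x-x'\|$ by nonexpansiveness, so the two difference quotients differ by at most $s^{-1}(\|x-x'\|+\|T(x)-T(x')\|)\to 0$, and the two pointwise limits agree. This simultaneously establishes \eqref{e-olim} (taking, say, $x=0$, though any $x$ works by the independence just shown) and \eqref{e-inf}.

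The step I expect to require the most care is the passage from the coordinatewise monotonicity of the difference quotients to a statement about the \emph{order} limit $\olim$ in $\bounded(S)$ as opposed to a merely pointwise statement — one must invoke that on $\bounded(S)$ the order topology coincides with pointwise convergence (as recalled in the excerpt via \cite[Lemma~8.17]{aliprantis}), and that a monotone bounded net in a Banach lattice order-converges to its supremum/infimum. A secondary subtlety is that although each $T_i$ is real-valued and concave, one should confirm the limit function $i\mapsto\hat T(y)_i$ is the same whether computed as an infimum over $s$ or an order-limit, which is immediate once monotonicity is in hand since an infimum of a nonincreasing family equals its limit. No continuity of $\hat T(y)$ on $S$ is claimed — only membership in $\bounded(S)$ — which is consistent with the fact that $\hat T$ need not map $X$ into $X$; this is precisely why the codomain in \eqref{e-olim} is $\bounded(S)$ and the limit is taken in the order topology rather than the norm topology.
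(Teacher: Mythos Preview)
Your proof is correct and follows the same route as the paper: both reduce to the coordinatewise statement that for a real-valued concave map the difference quotients $s^{-1}(T_i(x+sy)-T_i(x))$ are nonincreasing in $s$, so the pointwise limit exists and equals the infimum (the paper dispatches this in one line by citing Rockafellar's Theorem~8.5, whereas you write out the convex-combination argument). You are more careful than the paper on one point: to guarantee the limit lies in $\bounded(S)$ rather than being $-\infty$ in some coordinate, and to get basepoint-independence cheaply, you invoke order preservation, commutation with the unit, and nonexpansiveness --- none of which are hypotheses of the lemma \emph{as stated}, though they are standing assumptions in the surrounding section on Bellman operators, so this is harmless in context.
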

\begin{proof}
Fix now $y\in X= \cont(S)$. By a standard
result of convex analysis, for each $i\in S$, the recession
function $\hat{T_i}$ of the $i$ coordinate map of $T$, defined
by
\[\lim_{s\to\infty} s^{-1}T_i(sy)
\]
does exist and coincides with
\[
\inf_{s>0} s^{-1}(T_i(x+sy)-T_i(x))
\]
for any $x\in X$, see~\cite[Theorem~8.5]{Rock}. This implies that~\eqref{e-olim}
and~\eqref{e-inf} hold.
\end{proof}

\begin{proposition}[Characterization of sub-invariant half-lines]\label{prop-cns}
Assume that $T$ is concave from $X\to X$. Then, $s\mapsto u+s\eta$
is a sub-invariant half-line of $T$ if and only if
\[
T(u)\geq u+\eta ,\qquad \hat{T}(\eta)\geq \eta \enspace.
\label{e-charac-sub}
\]
\end{proposition}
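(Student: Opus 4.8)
The plan is to prove both implications directly from the characterization of the recession function given in the previous lemma, exploiting the concavity of each coordinate $T_i$ and the fact that a half-line condition is equivalent to a family of pointwise inequalities indexed by $i\in S$.

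First I would prove the forward implication. Suppose $s\mapsto u+s\eta$ is a sub-invariant half-line, i.e., $T(u+s\eta)\geq u+(s+1)\eta$ for all $s\geq 0$. Taking $s=0$ immediately yields $T(u)\geq u+\eta$, which is the first required inequality. For the second inequality, I would fix $i\in S$ and use the concavity of $T_i$ together with the defining identity $\hat T(\eta)=\inf_{s>0}s^{-1}(T(x+s\eta)-T(x))$ from \eqref{e-inf}, applied with $x=u$. The sub-invariance gives, for each $s>0$, $s^{-1}(T(u+s\eta)-T(u))\geq s^{-1}((u+(s+1)\eta)-T(u))$. Here I would need $T(u)\geq u+\eta$ (already established) so that $u-T(u)+\eta\leq 0$; then $s^{-1}((s+1)\eta +u -T(u))= \eta + s^{-1}(\eta+u-T(u))\geq \eta - s^{-1}\|\eta+u-T(u)\|e$ — actually, cleaner: since $u+(s+1)\eta-T(u)\geq u+(s+1)\eta-T(u)$ and using $T(u)\leq$ nothing helpful, I would instead argue $s^{-1}(T(u+s\eta)-T(u))\geq s^{-1}(u+\eta-T(u)) +\eta$, and since $u+\eta-T(u)\leq 0$ the first term is $\leq 0$, so this does not directly give $\geq\eta$. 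Let me instead take the infimum over $s>0$ of the left side, which by \eqref{e-inf} equals $\hat T(\eta)$, and on the right side note $\liminf_{s\to\infty} s^{-1}(u+(s+1)\eta-T(u)) = \eta$ in the order topology; since the infimum of a decreasing-to-$\hat T(\eta)$ family dominates the right side for all $s$, letting $s\to\infty$ gives $\hat T(\eta)\geq \eta$. So the key point is that $\hat T(\eta)$ is an infimum, hence $\hat T(\eta)\geq \eta$ follows once $T(u+s\eta)-T(u)\geq s\eta + (u+\eta-T(u))$ and the last bracket is order-bounded; dividing and sending $s\to\infty$ kills it.

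For the converse, assume $T(u)\geq u+\eta$ and $\hat T(\eta)\geq\eta$. I want $T(u+s\eta)\geq u+(s+1)\eta$ for all $s\geq 0$. Fix $i\in S$. By concavity of $T_i$, the map $s\mapsto s^{-1}(T_i(u+s\eta)-T_i(u))$ is nonincreasing in $s$, so for every $s>0$, $s^{-1}(T_i(u+s\eta)-T_i(u))\geq \hat T_i(\eta)\geq \eta_i$ by \eqref{e-inf} and the hypothesis. Hence $T_i(u+s\eta)\geq T_i(u)+s\eta_i\geq u_i+\eta_i+s\eta_i = u_i+(s+1)\eta_i$, using $T(u)\geq u+\eta$ in the last step. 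Since $i\in S$ was arbitrary, $T(u+s\eta)\geq u+(s+1)\eta$, and the case $s=0$ is exactly $T(u)\geq u+\eta$. This completes the equivalence.

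The main obstacle I anticipate is purely bookkeeping in the order topology of $\bounded(S)$: one must be careful that the infimum in \eqref{e-inf} is taken coordinatewise and that monotonicity of difference quotients for concave scalar functions (Rockafellar, Theorem~8.5) transfers to the vector-valued statement; this is routine but requires the lemma proved just before the proposition. A second minor point is ensuring that $T(u)$ is finite, i.e., lies in $X=\cont(S)$, which holds since $T$ maps $X$ to $X$ by hypothesis; no noncompactness subtleties arise because everything is reduced to scalar concave analysis in each coordinate.
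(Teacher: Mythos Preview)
Your proposal is correct and follows essentially the same approach as the paper: the forward direction is obtained by setting $s=0$ and then letting $s\to\infty$ in the sub-invariance inequality (using that $\hat T(\eta)=\olim_{s\to\infty}s^{-1}T(u+s\eta)$), while the converse follows directly from the inequality $T(u+s\eta)\geq T(u)+s\hat T(\eta)$ supplied by~\eqref{e-inf}. The paper's write-up is simply more compressed---it omits the false starts and the coordinatewise reduction, stating the converse in one line as $T(u+s\eta)\geq T(u)+s\hat T(\eta)\geq u+(s+1)\eta$.
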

\begin{proof}
The first condition is obtained by setting $s:=0$ in
$T(u+s\eta)\geq u+(s+1)\eta$, whereas the second one
is obtained from $\hat{T}(\eta)=\lim_{s\to\infty}s^{-1}T(u+s\eta)
\geq \eta$.
Conversely, by~\eqref{e-inf}, $T(u+s\eta) \geq T(u) + s\hat{T}(\eta)
\geq u+(s+1)\eta$.
\end{proof}

We now assume that the $i$th coordinate of $T$ is given
in the following explicit way
\begin{align}\label{e-represent}
T_i(x) = \inf_{a\in A_i} \big(r^a_i  + P^a_i x)
\end{align}
where $A_i$ is a non-empty set, $r^a_i \in \R$, and $P^a_i$ a positive
continuous linear map from $X\to \R$, such $P^a_i e =1$.
Recall that the dual space of $\cont(S)$ can be identified
to the space $\operatorname{ca}_r(S)$ of regular signed Borel measures with bounded variation on $S$. It follows
that $P^a_i x$ coincides with the integral of the continuous function $x$ with respect to a regular probability measure $\mu_i^a$ on the state space $S$.
Then, $T$ is the one
day dynamic programming operator of a Markov decision process
with state space $S$, set of actions $A_i$ in state $i\in S$.
The real number $r^a_i$ represents the instantaneous
cost in state $i$ for the action $a$, and $\mu^a_i$ is the probability
law according to which the next state is chosen.

When $T$ is of the form~\eqref{e-represent}, the conditions~\eqref{e-charac-sub}
characterizing sub-invariant half-lines are equivalent to an infinite system
of linear inequalities (indexed by $a\in A_i$).

\begin{remark}
An application of Legendre-Fenchel duality shows
that a representation of the form~\eqref{e-represent}
does exist as soon as $T$ is order preserving,
concave, and commutes with the addition of the unit,
which justifies the terminology ``Bellman operator''
in Definition~\ref{def-bellman}.
This was already noted in~\cite[Prop.~2.1]{spectral} in the finite
dimensional case.
We can write, applying Legendre-Fenchel duality to the convex
continuous map $-T_i : X\to \R$, and using the fact that the dual
of $X=\cont(S)$ is the space $\operatorname{ca}_r(S)$,
\begin{align}
-T_i(x) = \sup_{\mu\in\operatorname{ca}_r(S)} \big(\< \mu ,x> - (-T_i)^*(\mu)\big)
\label{e-pm}
\end{align}
where, for any convex function $f: X\to \R$, we denote by $f^*(\mu)= \sup_{x\in X}\big(\<\mu,x>-f(x)\big)$, $\operatorname{ca}_r(S) \to \R \cup\{+\infty\}$,  the Legendre-Fenchel transform of $f$.
Let $\mathcal{P}_i:= \{\mu \mid (-T_i)^*(-\mu)<\infty\}$
Since $T$ commutes with the addition of the unit,
$(-T_i)^*(-\mu) \geq \sup_{s\in \R}(\<-\mu, s e> +T_i(s e))
= \sup_{s\in \R}(s(1-\<\mu,  e>) + T_i(0))$,
and so $\mu\in \mathcal{P}_i$ implies $\<\mu,e>=1$.
Since $T$ is order preserving, we have $(-T_i)^*(-\mu)
\geq \sup_{x\geq 0} (\<-\mu,x> +T_i(x))
\geq \sup_{x\geq 0} \<-\mu,x> +T_i(0)$, so that
$(-T_i^*)(-\mu)<\infty$ implies that the measure
$\mu$ is positive. Hence~\eqref{e-pm}  can be rewritten as
\[
T_i(x) = \inf_{\mu\in\mathcal{P}_i} \big(\< \mu ,x> + (-T_i)^*(-\mu)\big)
\]
which is of the form~\eqref{e-represent}.
This corresponds to a Markov decision problem
in which in state $i\in[S]$, the action
of the player consists in choosing the probability
measure $\mu \in \mathcal{P}_i$ according to which the next state
is drawn, with an associated cost $(-T_i)^*(-\mu)$.

\end{remark}

It will be convenient, for any $a\in A_i$, to denote by $T_i^a$ the affine map
\[
T_i^a(x):= r_i^a +P_i^a x
\]
with recession function
\[
\hat{T}_i^a (x) = P_i^a x
\]
The following is the analogue of the complementary condition
for the Hamilton-Jacobi PDE, i.e., the third equation in System (S)
above:%
\begin{align}
\inf_{a\in A_i } \max(\hat{T}_i^a(\eta)-\hat{\eta}_i, {T}_i^a(u)-u_i-\eta_i) \leq 0 ,\qquad \forall i\in [n]
\label{e-compldisc}
\end{align}

The following provides an analogue of~\Cref{speed}, showing
a rate of convergence.%
\begin{theorem}
Suppose that $T$ is a Bellman operator.
Suppose in addition that $T(u)\geq u+\eta$ and that $\hat{T}(\eta) \geq \eta$
for some $u,\eta\in X$
and that the complementarity condition~\eqref{e-compldisc} holds,
the infimum being attained for each $i\in[n]$.
Then, $s\mapsto u+s\eta$ is an invariant half-line of $T$.
Moreover, the sequence $v_\alpha-\alpha^{-1}\eta$ stays bounded as $\alpha\to 0^+$.
\end{theorem}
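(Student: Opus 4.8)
The plan is to show that, under the hypotheses, the sub-invariant half-line $s\mapsto u+s\eta$ is in fact \emph{also} super-invariant, hence invariant, and then to read off the boundedness of $v_\alpha-\alpha^{-1}\eta$ from the Kohlberg-type result already proved. The only structural input I would use is the representation \eqref{e-represent}, $T_i(x)=\inf_{a\in A_i}(r_i^a+P_i^a x)$, together with the elementary identity $T_i(u+s\eta)=\inf_{a\in A_i}\bigl(T_i^a(u)+s\,\hat T_i^a(\eta)\bigr)$ for $s\geq 0$, which is immediate from linearity of $P_i^a$ and $\hat T_i^a(x)=P_i^a x$. Taking the infimum over $a$ in the assumptions $T(u)\geq u+\eta$ and $\hat T(\eta)\geq\eta$ gives their coordinatewise/action-wise form: $T_i^a(u)\geq u_i+\eta_i$ and $\hat T_i^a(\eta)\geq\eta_i$ for \emph{every} $i$ and every $a\in A_i$. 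From this the sub-invariance is immediate (it is exactly \Cref{prop-cns}, or one sees it directly: each term $T_i^a(u)+s\hat T_i^a(\eta)$ is $\geq(u_i+\eta_i)+s\eta_i=u_i+(s+1)\eta_i$ when $s\geq 0$).

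The core step is the reverse inequality, and this is where the complementarity condition \eqref{e-compldisc} is used. For each $i$, the assumption that the infimum there is \emph{attained} yields an action $a_i\in A_i$ for which both terms of the maximum are nonpositive, i.e. $\hat T_i^{a_i}(\eta)\leq\eta_i$ and $T_i^{a_i}(u)\leq u_i+\eta_i$. Combined with the reverse inequalities of the previous paragraph, $a_i$ realizes \emph{both} equalities simultaneously: $\hat T_i^{a_i}(\eta)=\eta_i$ and $T_i^{a_i}(u)=u_i+\eta_i$, i.e. it is tight for the recession equation and for the sub-eigenvector equation at the same time. Plugging $a=a_i$ into the identity for $T_i(u+s\eta)$ then gives, for all $s\geq 0$,
\[
T_i(u+s\eta)\ \leq\ T_i^{a_i}(u)+s\,\hat T_i^{a_i}(\eta)\ =\ (u_i+\eta_i)+s\eta_i\ =\ u_i+(s+1)\eta_i,
\]
so $s\mapsto u+s\eta$ is super-invariant, and hence an invariant half-line of $T$ with director $\eta$.

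For the second assertion, I would simply note that a Bellman operator is in particular a Shapley operator, hence nonexpansive; so $T$ is a nonexpansive self-map of $X$ possessing an invariant half-line with director $\eta$, and \Cref{prop-kohlberg} applies verbatim, giving that $v_\alpha-\alpha^{-1}\eta$ stays bounded as $\alpha\to 0^+$ (and, as byproducts, that $T^k(x)-k\eta$ is bounded and $\alpha v_\alpha\to\eta$, $k^{-1}v^k\to\eta$). Equivalently one may invoke \Cref{cor-1}, taking the sub- and super-invariant half-lines to both be the one just constructed.

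I expect the only delicate point to be the bookkeeping around \eqref{e-compldisc}: one must check that a \emph{single} action $a_i$ simultaneously makes both entries of the maximum nonpositive — this is precisely where attainment of the infimum is essential, since for near-optimal actions one would have to extract a limit, which need not exist in an infinite action set — and then that this same $a_i$ is forced to give equality in the two hypothesis inequalities. Everything else is a routine consequence of the representation \eqref{e-represent} and of the general theory of Shapley operators developed earlier in this section.
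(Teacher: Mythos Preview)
Your proof is correct and follows essentially the same route as the paper's: sub-invariance from \Cref{prop-cns}, then super-invariance by picking, for each $i$, an action $a_i$ attaining the infimum in~\eqref{e-compldisc} and using $T_i(u+s\eta)\leq T_i^{a_i}(u)+s\,\hat T_i^{a_i}(\eta)\leq u_i+(s+1)\eta_i$, and finally boundedness of $v_\alpha-\alpha^{-1}\eta$ from \Cref{prop-kohlberg}. The one difference is that you take the extra step of arguing that $a_i$ realizes \emph{equality} in both $T_i^{a_i}(u)=u_i+\eta_i$ and $\hat T_i^{a_i}(\eta)=\eta_i$; the paper uses only the inequalities $\leq$ coming directly from~\eqref{e-compldisc}, which already suffice for the super-invariance bound, so your equality observation (while correct) is not needed.
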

\begin{proof}
It follows from Proposition~\ref{prop-cns} that $T(u+s\eta)\geq u+(s+1)\eta$.

Assume now that the infimum in~\eqref{e-compldisc} is attained
at some point $a\in A_i$ and that it takes a nonpositive value. Then,
we have $T_i^a(\eta)\leq \eta_i$ and $\hat{T}_i^a(u)\leq u_i+\eta_i$.
Since the map $T_i^a$ is affine, with linear part $\hat{T_i^a}$, we deduce that $T_i(u+s\eta) \leq T_i^a (u+s\eta)
= T_i^a(u) + sT_i^a(\eta) \leq u_i+ \eta_i + s\eta_i \leq u_i+(s+1)\eta_i$,
showing that $s\mapsto u+s\eta$ is an invariant half-line of $T$.

The final statement follows from~\Cref{prop-kohlberg}.
\end{proof}

\begin{theorem}\label{th-2}
Suppose that $T$ is a Bellman operator. Then, any accumulation point in the uniform
convergence topology of $\discount v_\discount$, as $\discount\to 0^+$,
coincides with
the supremum of the director vectors of sub-invariant half-lines
of $T$.
Moreover, the same is true of any accumulation point of the sequence
$(v^k/k)_{k\geq 0}$ with respect to the same topology.
\end{theorem}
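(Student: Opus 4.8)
The plan is to prove the two inequalities separately, mirroring the structure of the proof of \Cref{representationI} in continuous time. Write $\bar v$ for the claimed supremum, namely $\bar v := \sup\{\eta : s\mapsto u+s\eta \text{ is a sub-invariant half-line of } T \text{ for some } u\in X\}$; note first that this supremum is well defined in the Banach lattice $X$: every director vector $\eta$ of a sub-invariant half-line satisfies $\eta \leq \alpha v_\alpha$ for all $\alpha>0$ by \Cref{th-1}, so the family of such $\eta$ is bounded above (by, e.g., $v_\alpha$ for any fixed $\alpha$), and order-completeness of $X=\cont(S)$ (after Kakutani-Krein) gives the supremum. Let $w^*$ be an accumulation point of $\alpha v_\alpha$ in the uniform topology, say $\alpha_j v_{\alpha_j} \to w^*$. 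The first inequality, $w^* \geq \bar v$, is immediate from \Cref{th-1}: for every sub-invariant half-line with director $\eta$ we have $\alpha_j v_{\alpha_j} \geq \eta$ for all $j$, hence $w^* \geq \eta$, and taking the supremum over $\eta$ gives $w^*\geq \bar v$.

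For the reverse inequality $w^* \leq \bar v$, the key is to exhibit, for each $j$, a sub-invariant half-line whose director vector approximates $\alpha_j v_{\alpha_j}$ from below. This is the discrete-time analogue of \Cref{pr:cluster_gives_subsolution}. The natural candidate, in analogy with the continuous-time pair $(V_{\lambda_j}, v^*-\epsilon_j)$, is the offset $\eta_j := \alpha_j v_{\alpha_j} - \epsilon_j e$ with basepoint $u_j := v_{\alpha_j}$, where $\epsilon_j := \|\alpha_j v_{\alpha_j} - w^*\|_\infty$. I would check directly that $s\mapsto u_j + s\eta_j$ is sub-invariant: starting from the fixed-point equation $T((1-\alpha_j)v_{\alpha_j}) = v_{\alpha_j}$, i.e. $T(v_{\alpha_j} - \alpha_j v_{\alpha_j}) = v_{\alpha_j}$, we want $T(v_{\alpha_j} + s\eta_j) \geq v_{\alpha_j} + (s+1)\eta_j$ for all $s\geq 0$. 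Using \Cref{prop-cns}, it suffices to verify the two conditions $T(u_j)\geq u_j+\eta_j$ and $\hat T(\eta_j)\geq \eta_j$. The first reads $T(v_{\alpha_j}) \geq v_{\alpha_j} + \alpha_j v_{\alpha_j} - \epsilon_j e$; since $T$ commutes with addition of the unit and is monotone, $T(v_{\alpha_j}) = T((v_{\alpha_j}-\alpha_j v_{\alpha_j}) + \alpha_j v_{\alpha_j}) = T(v_{\alpha_j}-\alpha_j v_{\alpha_j}) + $ (a correction controlled by $\|\alpha_j v_{\alpha_j}\|$)\,---\,more carefully, since $\alpha_j v_{\alpha_j} \geq (\inf_i (\alpha_j v_{\alpha_j})_i)e$ one gets $T(v_{\alpha_j}) \geq v_{\alpha_j} + \inf_i(\alpha_j v_{\alpha_j})_i\, e$, which is not yet what we need because $\inf_i(\alpha_j v_{\alpha_j})_i$ need not dominate $\alpha_j v_{\alpha_j} - \epsilon_j e$ pointwise.

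The resolution, and the main obstacle, is that the correct half-line must be built not from $v_{\alpha_j}$ directly but by the order-limit / recession construction: the right basepoint is obtained by iterating $T$ or by passing to the recession function $\hat T$ applied to the candidate offset. Concretely, I expect to argue as in the continuous case by first establishing the analogue of \Cref{speed1}(b): if $s\mapsto u+s\eta$ is a sub-invariant half-line, then $v_\alpha \geq \alpha^{-1}\eta + u - \|u\|_\infty e$ (which is exactly \Cref{th-1} combined with a uniform bound from Lemma~\ref{lem-conj}), so that $\eta \leq \alpha v_\alpha - \alpha(u - \|u\|_\infty e)$; passing to the limit along $\alpha_j$ gives $\eta \leq w^*$, recovering the easy direction, but the point for the hard direction is the converse extraction. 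For that I would invoke the recession-function machinery: set $\eta_j := \alpha_j v_{\alpha_j} - \epsilon_j e$ and show $\hat T(\eta_j) \geq \eta_j$ using that $w^*$ (the uniform limit) satisfies $\hat T(w^*)\geq w^*$ — the discrete analogue of \Cref{La}, proved from $\alpha_j^2 v_{\alpha_j} + (\text{one-step terms}) $ tending to $0$ and order-closedness of the inequality $\hat T(y)\geq y$ — together with $\eta_j \leq w^*$ and concavity/monotonicity of $\hat T$ (being positively homogeneous and superadditive in the relevant range). Then a suitable basepoint $u_j$ making $(u_j,\eta_j)$ satisfy both conditions of \Cref{prop-cns} is produced, e.g. $u_j := v_{\alpha_j}$ works once the $\hat T$-condition is in hand and one absorbs the slack $\epsilon_j$. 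This yields $\eta_j \leq \bar v$, hence $\alpha_j v_{\alpha_j} - \epsilon_j e \leq \bar v$, and letting $j\to\infty$ (so $\epsilon_j\to 0$) gives $w^* \leq \bar v$. I expect the genuinely delicate point to be verifying $\hat T(\eta_j)\geq \eta_j$ for finite $j$ rather than just in the limit — establishing the discrete analogue of \Cref{La} and transferring it from $w^*$ to the nearby $\eta_j$ via the concavity inequality~\eqref{e-inf}. Finally, the statement about accumulation points of $(v^k/k)_{k\geq 0}$ follows by the identical argument: the analogue of \Cref{th-1} already gives $\liminf_k v^k/k \geq \eta$ for every sub-invariant director $\eta$, hence $\geq \bar v$; and any uniform accumulation point $\bar w$ of $v^k/k$ satisfies $\hat T(\bar w)\geq \bar w$ (from $T^k(v^0)/k \to \bar w$ and nonexpansiveness), producing the matching sub-invariant half-line $s\mapsto v^0 + s(\bar w - \delta_k e)$ with $\delta_k\to 0$, so $\bar w \leq \bar v$.
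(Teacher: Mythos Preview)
Your overall strategy—easy direction from \Cref{th-1}, hard direction by exhibiting a sub-invariant half-line whose director is close to the accumulation point—matches the paper. But the specific half-line you propose does not work, and for the $v^k/k$ part there is a genuine missing idea.

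\medskip

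\textbf{The discounted case.} Your candidate has director $\eta_j=\alpha_j v_{\alpha_j}-\epsilon_j e$ and basepoint $u_j=v_{\alpha_j}$. Neither condition of \Cref{prop-cns} can be checked with these choices. For the first, you need $T(v_{\alpha_j})\geq v_{\alpha_j}+\alpha_j v_{\alpha_j}-\epsilon_j e$, but the fixed-point identity only gives $T((1-\alpha_j)v_{\alpha_j})=v_{\alpha_j}$; monotonicity or nonexpansiveness cannot produce the extra $+\alpha_j v_{\alpha_j}$ on the right. For the second, you want $\hat T(\eta_j)\geq \eta_j$ for a \emph{fixed} $j$, and you correctly flag this as the delicate point; but concavity/monotonicity of $\hat T$ do not transfer $\hat T(w^*)=w^*$ to the nearby $\eta_j$. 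The paper sidesteps both difficulties by swapping the roles: the director is the \emph{limit} shifted, $w^*-\epsilon e$, and the basepoint is $(1-\alpha_k)v_{\alpha_k}$. Then $\hat T(w^*-\epsilon e)=\hat T(w^*)-\epsilon e=w^*-\epsilon e$ is immediate from Lemma~\ref{lem-acc} and commutation with $e$, and $T(u)=v_{\alpha_k}=u+\alpha_k v_{\alpha_k}\geq u+w^*-\epsilon e$ for $k$ large. No ``transfer'' argument is needed.

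\medskip

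\textbf{The undiscounted case.} Here your proposal has a real gap. You suggest the half-line $s\mapsto v^0+s(\bar w-\delta_k e)$, but there is no reason whatsoever that a \emph{single} application of $T$ to $v^0$ gains $\bar w-\delta_k e$: the accumulation-point hypothesis only gives $T^p(0)\geq p(\bar w-\epsilon e)$ for some large $p$, not $T(0)\geq \bar w-\epsilon e$. The paper's construction is the missing idea: writing $\tilde\eta=\bar w-\epsilon e$ and using $\hat T(\tilde\eta)=\tilde\eta$ together with~\eqref{e-inf} to get $T(x+t\tilde\eta)\geq T(x)+t\tilde\eta$, one takes
\[
u:=\sup\big((p-1)\tilde\eta,\;T(0)+(p-2)\tilde\eta,\;\dots,\;T^{p-1}(0)\big),
\]
and a cyclic-permutation computation (the last term $T^{p}(0)+s\tilde\eta$ is bounded below using $T^p(0)\geq p\tilde\eta$) shows $T(u+s\tilde\eta)\geq u+(s+1)\tilde\eta$. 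This step—upgrading a $p$-step inequality to a one-step sub-invariant half-line via a supremum of shifted iterates—is essential and absent from your plan.
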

We shall need the following standard observation.
\begin{lemma}\label{lem-acc}
Any accumulation point $\eta$ of $\discount v_\discount$ as $\discount\to 0^+$,
in the topology of uniform convergence, satisfies
\[\hat{T}(\eta)=\eta\enspace .
\]
The same is true of any accumulation point of the sequence $v^k/k$,
in the topology of uniform convergence.
\end{lemma}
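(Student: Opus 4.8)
The plan is to prove the statement in \Cref{lem-acc} about accumulation points of $\discount v_\discount$, and then note that the argument for $v^k/k$ is parallel. First I would recall that $v_\discount$ satisfies the fixed-point equation $T((1-\discount)v_\discount) = v_\discount$, which can be rewritten as $T(v_\discount - \discount v_\discount) = v_\discount$. The key idea is that, as $\discount \to 0^+$, the vector $v_\discount$ typically blows up like $\discount^{-1}\eta$, and one wants to extract from the fixed-point equation an asymptotic relation for the recession function $\hat T$ applied to the accumulation point $\eta$ of $\discount v_\discount$.

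The concrete steps I would carry out: suppose $\discount_j \to 0^+$ is a subsequence along which $\discount_j v_{\discount_j} \to \eta$ uniformly. First establish the easy inequality $\hat{T}(\eta) \geq \eta$. For this I would use \Cref{th-1} (the Comparison principle), or more directly argue as follows. Since $T$ is nonexpansive, $\|v_\discount - v_{\discount'}\|$ is controlled, but more useful is: from $T((1-\discount)v_\discount)=v_\discount$ and order-preservation, writing $u_j := v_{\discount_j}$, we have $T(u_j - \discount_j u_j) = u_j$. Apply $T$ nonexpansive: the point is to show $\hat T(\eta) = \olim_{s\to\infty}s^{-1}T(sy)\big|_{y=\eta}$ is squeezed to equal $\eta$. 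For the lower bound $\hat T(\eta)\geq \eta$: by \eqref{e-inf}, $\hat T(\eta) \leq s^{-1}(T(x+s\eta)-T(x))$ for every $s>0$ and every $x$; taking $x=0$, $s=\discount_j^{-1}$, we get $\discount_j^{-1}\big(T(\discount_j^{-1}\eta)-T(0)\big)\geq \hat T(\eta)$, i.e. $T(\discount_j^{-1}\eta)\geq T(0) + \discount_j^{-1}\hat T(\eta)$. Comparing $\discount_j^{-1}\eta$ with $\discount_j^{-1}\eta \approx (1-\discount_j)^{-1}\cdot \eta \approx v_{\discount_j}$ up to bounded error, and using $T((1-\discount_j)v_{\discount_j})=v_{\discount_j}$ together with nonexpansiveness to absorb the error terms, one derives $\eta \geq \hat T(\eta) + o(1)$ or $\eta \leq \hat T(\eta)+o(1)$ depending on the direction; taking $j\to\infty$ pins down one inequality. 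The reverse inequality $\hat T(\eta) \leq \eta$ follows symmetrically, again by comparing $v_{\discount_j}$ with $\discount_j^{-1}\eta$ and using the sandwich structure: from $T((1-\discount_j)v_{\discount_j}) = v_{\discount_j}$, divide by $\discount_j^{-1}$ and interpret $(1-\discount_j)v_{\discount_j}$ as $\approx \discount_j^{-1}\eta$, so in the limit the recession function of $T$ evaluated at $\eta$ must return $\eta$ exactly.

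For the $v^k/k$ statement, the parallel argument uses $v^k = T^k(v^0)$ together with the subadditivity/superadditivity estimates that $T^k$ inherits from concavity of $T$: if $v^{k_j}/k_j \to \eta$ uniformly, then $v^{k_j} \approx k_j\eta$, and applying $T$ once more, $v^{k_j+1} = T(v^{k_j}) \approx T(k_j\eta) \approx k_j \hat T(\eta)$ by \eqref{e-olim}, while also $v^{k_j+1}/(k_j+1) \to \eta$ forces $\hat T(\eta) = \eta$. One must be careful to control the errors from approximating $T(k_j\eta + (v^{k_j}-k_j\eta))$ by $T(k_j\eta)$, which is handled by nonexpansiveness since $\|v^{k_j} - k_j\eta\| = o(k_j)$.

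The main obstacle I expect is the bookkeeping needed to transfer between the two topologies: the recession function $\hat T$ is defined via an order limit ($\olim$), which is merely pointwise, whereas the accumulation point $\eta$ is assumed in the uniform topology. One must verify that evaluating $\hat T$ at this specific $\eta$ is legitimate and that the pointwise limit $\hat T(\eta)$ actually equals $\eta$ as functions on $S$ — the uniform convergence of $\discount_j v_{\discount_j}$ to $\eta$ is what saves us, since it lets us pass the error terms (which are $o(1)$ uniformly, hence also $o(1)$ pointwise) through the estimates. A secondary technical point is ensuring the characterization \eqref{e-inf} of $\hat T$, which requires concavity of $T$, is available — but that is part of the hypothesis that $T$ is a Bellman operator, so it can be invoked directly.
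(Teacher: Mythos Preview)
Your core step --- showing $\discount_j T(\discount_j^{-1}\eta)=\discount_j v_{\discount_j}+o(1)=\eta+o(1)$ via nonexpansiveness after rewriting $(1-\discount_j)v_{\discount_j}=\discount_j^{-1}(\discount_j v_{\discount_j})+O(1)$ --- is exactly what the paper does, and for $v^k/k$ your outline matches the paper's argument line for line.

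Where you diverge is in wrapping this step inside a two-inequality scheme and invoking the concavity-based characterization \eqref{e-inf}. This is an unnecessary detour: once $\discount_j T(\discount_j^{-1}\eta)\to\eta$ uniformly is established, the identity $\hat T(\eta)=\olim_{s\to\infty}s^{-1}T(s\eta)$ gives $\hat T(\eta)=\eta$ directly along the subsequence $s=\discount_j^{-1}$, with no need to split into $\leq$ and $\geq$. The paper carries this out as a single chain of equalities in three lines, using only nonexpansiveness (concavity enters only to guarantee that $\hat T$ exists). Your route works too, but note that the labeling in your sketch is reversed: the argument you place under ``for the lower bound $\hat T(\eta)\geq\eta$'' starts from $\hat T(\eta)\leq s^{-1}(T(s\eta)-T(0))$ and thus actually yields $\hat T(\eta)\leq\eta$, not the other direction. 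There is also a typo: with $s=\discount_j^{-1}$ the prefactor is $\discount_j$, not $\discount_j^{-1}$.
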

\begin{proof}
Suppose that $\eta =\lim_k \discount_k v_{\discount_k}$ for some sequence
$\discount_k\to 0^+$.
Then, using the nonexpansiveness of $T$,
$\eta = \discount_k v_{\discount_k} +o(1) =
\discount_k T((\discount_k^{-1}-1)\discount_k v_{\discount_k})
+o(1)
=
\discount_k T(\discount_k^{-1}\discount_k v_{\discount_k} +O(1) )
+o(1)
=\discount_k T(\discount_k^{-1}\discount_k v_{\discount_k})
+ o(1)
=\discount_k T(\discount_k^{-1} \eta) + o(1)$,
and since the uniform convergence implies the convergence
in the order topology we get
$\eta=\lim_k \discount_k v_{\discount_k}
= \olim_k \discount_k v_{\discount_k}
= \olim_k  \discount_k T(\discount_k^{-1} \eta)
= \hat{T}(\eta)$.

Suppose now that $\eta = \lim_k T^{n_k}(0)/n_k$ for some sequence
$n_k$. We deduce from $\|T^{n_k +1}(0)-T^{n_k}(0)\|\leq \|T(0)\|$
that $\eta=\lim_{k\to \infty}n_k^{-1}T^{n_k + 1}(0)$.
Moreover,
\[ n_k^{-1}T^{n_k+1}(0)=n_k^{-1}T(n_k n_k^{-1}T^{n_k}(0))
= n_k^{-1}T(n_k \eta) + o(1)\enspace,
\]
since every map $n_k^{-1}T(n_k\cdot)$ is Lipschitz
of constant $1$. Letting $k$ tend to $\infty$,
we get $\eta = \hat{T}(\eta)$.
\end{proof}

\begin{proof}[Proof of Theorem~\ref{th-2}]
Let $\bar{\eta}$ denote the supremum of the director vectors of the invariant half-lines of $T$.  By Theorem~\ref{th-1},
\begin{align}
\liminf_{\discount\to 0^+} \discount v_\discount \geq \bar{\eta} \qquad\text{ and } \qquad
\liminf_{k\to\infty} v^k/k \geq \bar{\eta} \enspace .
\label{e-bound}
\end{align}
Let $\eta$ be the uniform limit of $\discount_k v_{\discount_k}$ for
some sequence $\discount_k\to 0$. Since the uniform convergence implies
the order convergence, it follows from the first inequality in~\eqref{e-bound}
that $\eta \geq \bar{\eta}$.
Moreover,
\begin{align*}
v_{\discount_k}& =T((1-\discount_k) v_{\discount_k})
= T(v_{\discount_k} - \discount_k v_{\discount_k} )
\enspace.
\end{align*}
Let us fix $\epsilon>0$. Since $\discount_k v_{\discount_k}$ converges, we can choose $k$ such that $\discount_k v_{\discount_k} \geq \eta - \epsilon e$.
Setting $u:= v_{\discount_k} - \discount_k v_{\discount_k}$, we get
$T(u)=u + \discount_k v_{\discount_k} \geq u+\eta -\epsilon e$. Morever,
by Lemma~\ref{lem-acc},
we have $\hat{T}(\eta)=\eta$, and since $T$ commutes
with the addition of the unit, $\hat{T}(\eta-\epsilon e)= \eta -\epsilon e$.
Then, it follows from Proposition~\ref{prop-cns} that $s\mapsto u + s(\eta -\epsilon e)$ is a sub-invariant half-line of $T$. We deduce that $\bar \eta \geq
\eta -\epsilon e$, and since this holds for all $\epsilon>0$,
$\bar \eta =\eta$.

Suppose  now $\eta$ is a uniform limit of a sequence $v^{n_k}/n_k$.
It follows from the second inequality in~\eqref{e-bound}
that $\eta \geq \bar{\eta}$.

Finally, let $\eta$ be any accumulation point of the sequence $v^k/k$.
We assume, without loss of generality, that $v^0=0$.
In particular, $\|v^p/p -\eta\|\leq \epsilon$, for some $p$,
hence, $T^p(0)\geq p(\eta-\epsilon e)$. We observed in Lemma~\ref{lem-acc}
that $\hat{T}(\eta)=\eta$.
Let $\tilde{\eta}:=\eta-\epsilon e$, so that
\begin{align}
T^p(0)\geq p\tilde{\eta}\enspace .\label{e-lb}
\end{align}
Since $\tilde{T}$ commutes with the addition of the unit, we
still have $\hat{T}(\tilde{\eta})=\tilde{\eta}$. Hence,
for all $x\in X$, and $t>0$, we get from~\eqref{e-inf},
\begin{align}
T(x+t\tilde{\eta})\geq T(x) +t\hat{T}(\tilde{\eta}) = T(x) + t \tilde{\eta} \enspace .\label{e-last}
\end{align}

Consider now
\[
u := \sup\big((p-1)\tilde{\eta},T(0)+(p-2)\tilde{\eta},\dots, T^{p-2}(0)+\tilde{\eta}, T^{p-1}(0)\big)
\]
For all $s>0$, we get, using successively the fact
that $T$ is order preserving and that~\eqref{e-last}
and~\eqref{e-lb} hold,
\begin{align*}
&T(u+s \tilde{\eta})  =
T\big(\sup\big((s+p-1)\tilde{\eta},T(0)+(s+p-2)\tilde{\eta},\dots, T^{p-2}(0)+(s+1)\tilde{\eta}, T^{p-1}(0)+s\tilde{\eta}\big)\big)\\
&\geq \sup\big(T((s+p-1)\tilde{\eta}),T(T(0)+(s+p-2)\tilde{\eta}),\dots, T(T^{p-2}(0)+(s+1)\tilde{\eta}), T(T^{p-1}(0)+s\tilde{\eta})\big)\\
& \geq
\sup(T(0)+(s+p-1)\tilde{\eta},T^2(0)+(s+p-2)\tilde{\eta},\dots, T^{p-1}(0)+(s+1)\tilde{\eta}, T^{p}(0)+s\tilde{\eta}))\\
&\geq
\sup(T(0)+(s+p-1)\tilde{\eta},T^2(0)+(s+p-2)\tilde{\eta},\dots, T^{p-1}(0)+(s+1)\tilde{\eta}, p\tilde{\eta}+s\tilde{\eta}))\\
&= u+(s+1) \tilde{\eta}
\end{align*}
since the terms in the last supremum, up to a circular permutation
and the subtraction by $(s+1)\eta$, are precisely those appearing
in the definition of $u$. We conclude that $\bar{\eta} \geq \tilde{\eta}=\eta-\epsilon e$,
and since this holds for all $\epsilon>0$,we get $\bar{\eta}\geq \eta$.
The other inequality follows from the second inequality in~\eqref{e-bound}.

\end{proof}

\begin{remark}
	Here we give some explanation making the last argument more intuitive:
if $T$ is order preserving, and if $T^p(v)\geq v$, then, one has $T(u)\geq u$, where $$u:=\sup(v,T(v),\dots,T^{p-1}(v)).$$ The vector $u$ in the above proof is constructed by adapting this idea, with a suitable shift by $\tilde{\eta}$.
\end{remark}
\begin{corollary}
Suppose that $T: \R^n\to \R^n$ is a Bellman operator. Then, the two limits
$\lim_{\discount\to 0^+}\discount v_\discount$ and $\lim_{k\to\infty} k^{-1}v^k$
coincide with the supremum of the director vectors of sub-invariant half-lines
of $T$.
\hfill\qed
\end{corollary}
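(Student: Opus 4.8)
The plan is to derive the corollary from \Cref{th-2} by a routine finite-dimensional compactness argument: in $\R^n$ every bounded sequence has convergent subsequences, and a bounded sequence all of whose accumulation points coincide must converge. Thus the only things to add to \Cref{th-2} are the boundedness of the two families $\{\discount v_\discount\}_{0<\discount<1}$ and $\{v^k/k\}_{k\ge 1}$, together with this elementary principle.

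First I would check boundedness. Applying nonexpansiveness of $T$ to the fixed-point relation $v_\discount = T((1-\discount)v_\discount)$, compared with $T(0)$, gives $\|v_\discount - T(0)\| \le (1-\discount)\|v_\discount\|$, whence $\|\discount v_\discount\| \le \|T(0)\|$ for every $\discount\in(0,1)$. Similarly, iterating the one-step bound $\|T^k(0)-T^{k-1}(0)\|\le\|T(0)\|$ yields $\|T^k(0)\|\le k\|T(0)\|$, and since $\|T^k(v^0)-T^k(0)\|\le\|v^0\|$, the family $v^k/k$ is bounded as well. These estimates are routine and I would not dwell on them.

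Next, since $X=\R^n$ is finite-dimensional, the uniform convergence topology coincides with the Euclidean topology and bounded sets are relatively compact; in particular both families admit at least one accumulation point (as $\discount\to 0^+$, respectively as $k\to\infty$). By \Cref{th-2}, every such accumulation point equals the supremum $\bar\eta$ of the director vectors of sub-invariant half-lines of $T$, so in passing $\bar\eta$ is a well-defined element of $\R^n$. To conclude I would invoke the standard fact that a bounded family in $\R^n$ with a unique accumulation point converges to it: were $\discount v_\discount \not\to \bar\eta$, there would be $\e>0$ and $\discount_j\to 0^+$ with $\|\discount_j v_{\discount_j}-\bar\eta\|\ge\e$, and extracting a convergent subsequence by compactness would produce a second accumulation point, contradicting \Cref{th-2}; the same reasoning handles $v^k/k$. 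Hence both limits exist and equal $\bar\eta$. I do not anticipate a genuine obstacle here, since all the substance lies in \Cref{th-2}; the only mild point of care is to note that, in finite dimension, the ``accumulation points in the uniform convergence topology'' of \Cref{th-2} are exactly the ordinary subsequential limits, so that the compactness argument genuinely closes the loop.
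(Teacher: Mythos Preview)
Your argument is correct and is precisely the routine finite-dimensional compactness argument the paper has in mind: the corollary is stated with a bare \qed immediately after \Cref{th-2}, so the paper gives no proof beyond the implicit one you have written out (boundedness of $\discount v_\discount$ and $v^k/k$, existence of accumulation points by compactness in $\R^n$, uniqueness of the accumulation point by \Cref{th-2}, hence convergence). Your estimates $\|\discount v_\discount\|\le\|T(0)\|$ and $\|v^k\|\le k\|T(0)\|+\|v^0\|$ are the standard ones, and your observation that the existence of a finite accumulation point forces $\bar\eta\in\R^n$ is the right way to justify that the supremum is attained in $\R^n$.
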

\begin{remark}
A natural question is to give general conditions for the existence of an invariant half-line. So far, there seems to be two relatively well understood special
cases.
(i) Kohlberg showed in~\cite{kohlberg} that if
$T:\R^n\to\R^n$
is polyhedral and nonexpansive, then $T$ admits an invariant half-line.
(ii) Another special situation is the existence of an {\em ergodic eigenvector},
i.e., a vector $u\in X$ such that $T(x) =\lambda e + x$,
for some $x\in X$. Then, $s\mapsto u+s \lambda e$ is an invariant half-line.
The latter ergodic problem has been widely studied
in the setting of Perron-Frobenius theory and a number of sufficient
existence conditions are known, see in particular~\cite{Nus88,gaubertgunawardena,akiangauberthochart}.
\end{remark}
\begin{remark}
Kohlberg's result on the existence of invariant half-lines
for polyhedral nonexpansive maps is tight in
the following sense. Consider perturbing a polyhedral nonexpansive map
$T:\R^n\to \R^n$ in such a way
that $T$ is replaced by a non-polyhedral map $\tilde{T}$, also nonexpansive
(in the same norm), such that $T(x)-\tilde{T}(x) \to 0$ when $\|x\|\to\infty$. Then, it can be easily checked
that the limit $\eta:= \lim_k k^{-1}v^k(T)=\lim_{\discount\to 0^+}\discount v_\discount(T)$ remains unchanged after replacing $T$ and $\tilde{T}$. However,
it may be the case that $\tilde{T}$ has no invariant half-space
although $T$ does.
An example in which this situation arises is the following:
Let $T:\R^2\to \R^2$ be such that $T_1(x_1,x_2) =\max(x_1,x_2)$
and $T_2(x_2) = x_2$. Observe that $T(0)=0$, so that $T$ has
trivially an invariant half-line, with $\eta=0$.
Consider now $\tilde{T}_1(x)=
\log(e^{x_1}+ e^{x_2})$ and $\tilde{T}_2(x)=x_2$,
observe that $\tilde{T}$ is still sup-norm nonexpansive,
and that $\tilde{T}^k(0,0)= (\log(k+1), 0)$, so that
again $\lim_k k^{-1}v^k(\tilde{T})=0$. However, $\tilde{T}$ does not have any invariant half-line
$s\mapsto u+s\eta$. Indeed, we would have $\eta=\lim_k k^{-1}v^k(\tilde{T})=0$, and so,
$u$ would be a fixed point of $\tilde{T}$. However, for all $u\in \R^2$, we
have $\tilde{T}_1(u)>u_1$, implying that $\tilde{T}$ has no fixed point.
\end{remark}
\begin{remark}
We showed that if $T: X\to X$ has an invariant half-line
of the form $s\mapsto u+s\eta$, then the sequence  $(T^k(0)-k\eta)_{k\geq 0}$ is bounded. We may ask whether the converse property holds when $X=\R^n$.
It is known
that this is the case when $\eta$ is a scalar multiple from the unit,
so that $\eta=\lambda e$ for some $\lambda\in\R$.
Indeed, this follows from~\cite[Lemma~3]{gaubertgunawardena}, showing
that if the sequence $(T^k(0)-k\lambda e)_{k\geq 0}$ is bounded,
then, there is a vector $u$ such that $T(u)=\lambda e +u$.
\end{remark}
\begin{remark}
Rosenberg \& Sorin~\cite{rosenbergsorin} and
Sorin~\cite{sorin} considered a different notion
of sub- or super-invariant vectors associated to a Shapley
operator $T$.
Their construction can be transposed as follows
to the present setting.
Define, for $\delta>0$,
the set $\mathcal{C}^-_\delta$ of ``$\delta$-approximate sub-harmonic
vectors'' $\eta$ in $X$ such that
\[
T((\alpha^{-1}-1)\eta) \geq  -\delta e + \eta \alpha^{-1}
\]
for $\alpha$ small enough, or equivalently,
\[
T(s \eta) \geq -\delta e + (s+1) \eta
\]
for $s$ large enough. They showed that $\liminf_k v^k/k \geq \eta- \delta e$
if $\eta\in \mathcal{C}^-_\delta$, and deduced that
$\liminf_k v^k/k \geq \eta$ if $\eta \in \cap_{\delta>0} \mathcal{C}^-_\delta$.
The main discrepancy with the present notion of sub-invariant half-line,
$T(u+s\eta)\geq u+(s+1)\eta$, is that the ``basis point'' $u$ is missing.
In this way, there is no natural ``limit equation'' allowing one to
characterize effectively the elements of $\cap_{\delta>0} \mathcal{C}^-_\delta$.
The degree of freedom brought by the basis point allows one to remedy,
at least partly to this issue, in such a way that the classical fixed
point approaches (solving the ergodic equation $T(u) =u + \lambda e$,
and Kohlberg's invariant half-lines for polyhedral maps) are now recovered
as special cases of our approach.
\end{remark}
\begin{remark}
The fact that the limits $\lim_{\discount \to 0^+} \discount v_\discount =
\lim_{k\to \infty} k^{-1} v_k$ exist and coincide has been proved
by several authors in the case of Markov decision processes
with finite state space, see in particular Renault~\cite{Renault} and
Vigeral~\cite[\S~5.5.1 and 5.5.2]{vigeral}. By comparison,
the novelty in Theorem~\ref{th-2} is the characterization
of the limit in terms of sub-invariant half-lines.
\end{remark}
\begin{remark}
Theorem~\ref{th-2} gives a characterization of the mean payoff as the solution
of an infinite linear programming problem. When the state and action spaces are finite, we recover the finite linear program initially introduced by Denardo and Fox~\cite{denardofox}.
\end{remark}
\begin{remark}
Consider the situation in which $T:\R^n\to \R^n$ is a polyhedral Shapley operator. The latter arise from games with perfect information and finite action spaces. We may assume, without loss of generality, that $T$ is of the form
\[
T_i(x) = \min_{a\in A_i} \max_{b\in B_i} (g_i^{ab}+\sum_{j\in[n]}P_{ij}^{ab}x_j)
\]
where $A_i,B_i$ are finite sets, $r_i^{ab}\in\R$, and
$P_{ij}^{ab}\geq 0$ with $\sum_{j\in[n]} P_{ij}^{ab}=1$,
for all $a\in A_i$ and $b\in B_i$. Then, if $s\mapsto u+ s\eta$ is an invariant
half-line, one can easily check that $u,\eta$ satisfy the following lexicographic system
\begin{align}
\eta_i & = \min_{a\in A_i} \max_{b\in B_i}\sum_{j\in[n]}  P_{ij}^{ab}\eta_j \label{e-lex1}\\
\eta_i + u_i & = \min_{a\in \bar{A}_i} \max_{b\in \bar{B}_i(a)}
(g_i^{ab}+\sum_{j\in[n]}P_{ij}^{ab}u_j) \label{e-lex2}
\end{align}
where $\bar{A}_i$ denote the set of $a\in A_i$ which achieve the minimum
in~\eqref{e-lex1}, and for all $a\in \bar{A}_i$,
$\bar{B}_i(a)$ denote the set of $b\in B_i$ which achieve the maximum
in~\eqref{e-lex2}. Conversely, every solution $u,\eta$ of~\eqref{e-lex1},
\eqref{e-lex2} entails that the shifted map, $s\mapsto u+(s+s_0)\eta$
is an invariant half-line, for some $s_0\geq 0$. Thus, invariant half-lines
are determined by a lexicographic fixed point problem. Similarly, sub-invariant
half-lines can be determined by inequalities. If $s\mapsto u+s\eta$
is sub-invariant, then,
inequality $\leq$ holds in~\eqref{e-lex1} for all $i\in[n]$,
and the inequality holds
in~\eqref{e-lex2} for those $i\in [n]$ such that the equality in~\eqref{e-lex1}
holds. Conversely, these conditions imply that the
shifted map, $s\mapsto u+(s+s_0)\eta$
is a sub-invariant half-line, for some $s_0\geq 0$.
The lexicographic system was originally studied in the one-player case~\cite{denardofox,dynkin}.
\end{remark}

\section{Appendix : HJB and monotonicity properties} \label{Appendix}
This part is devoted to monotonicity properties and to a comparison result for \eqref{EDP} under the assumption that $\overline{\Omega}$ is invariant which is supposed throughout the section.

For a function $ \theta : \R ^n \mapsto \R$, the epigraph is denoted by
$ \Epi  \theta  :=\{ (x, y) \in \R ^N \times \R , \, \theta (x) \leq y \} $  while $ \Hypo  \theta  :=\{ (x, y) \in \R ^N \times \R , \, \theta (x) \geq y \} $ denotes the hypograph.
For a set $A \subset \R ^n $, the notation $ A ^-$ stands for the negative polar cone $ \{x \in \R ^n , \, \<x,a> \leq 0 , \; \forall a \in A \}$. When $ a \in A$, $ T _A (a) $ stands for the (contingent) tangent cone\footnote{The vector $u$ belongs to $ T _A (a) $ if and only if there exists sequences $ h _k \to 0 ^+$ and $ u _k \to u $ with $a + h_k u_k \in A$ for any integer $k $. } to $A$ at $a$ (cf.\ e.g.~\cite{aufr90}).

\begin{theorem} \label{comp} Assume that ($H _ \Omega $), \eqref{eq:f} and \eqref{eq:L} hold true.

Let $ \theta _1$ and $\theta _2$ be two bounded uniformly continuous function from $\overline{\Omega } $  to $ \R$.

If $\theta _1$ is a viscosity subsolution to \eqref{EDP} on $\overline{\Omega }$  and $ \theta _2$ is a viscosity supersolution on $\overline{\Omega }$  to \eqref{EDP} then

\begin{equation} \label{comp-in}
\forall x \in \overline{\Omega } , \, \theta _1 (x) \leq \val(x) \leq \theta _2 (x)
\end{equation}
\end{theorem}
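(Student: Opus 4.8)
The plan is to derive both inequalities in \eqref{comp-in} from the defining formula \eqref{eq:val} for $\val$, combined with the equivalence between viscosity sub-/supersolutions of \eqref{EDP} and a sub-/super-optimality principle along the trajectories of \eqref{eq:traj}, which is the content of Proposition~\ref{mono}. The key structural point is that $(H_\Omega)$ ensures every trajectory issued from $\overline{\Omega}$ stays in $\overline{\Omega}$, so the viscosity inequalities --- assumed on all of $\overline{\Omega}$ --- can legitimately be invoked at each point $\traj(s)$ along the trajectory, in particular at boundary points where only the one-sided differentials $\partial^-_\Omega$ and $\partial^+_\Omega$ are available. Since the whole argument runs along trajectories and never uses the doubling-of-variables technique, no regularity of $\partial\Omega$ (no interior cone condition) is required.

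First I would establish $\val\leq\theta_2$. As $\theta_2$ is a bounded viscosity supersolution of \eqref{EDP}, Proposition~\ref{mono}~$(i)$ (applied with running cost $\ell=L$ and discount $\ld$) yields, for each $x\in\overline{\Omega}$, a control $\al\in\mathcal A$ such that
\begin{equation*}
\theta_2(x)\geq e^{-\ld t}\,\theta_2\big(\traj(t)\big)+\int_0^t e^{-\ld s}\,L\big(\traj(s),\al(s)\big)\,ds\qquad\forall\,t\geq 0 .
\end{equation*}
Letting $t\to+\infty$ and using that $\theta_2$ is bounded, $\ld>0$ and $0\leq L\leq 1$, the boundary term vanishes and the integral converges, so $\theta_2(x)\geq\int_0^{\infty} e^{-\ld s}L(\traj(s),\al(s))\,ds\geq\val(x)$ by \eqref{eq:val}.

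Next I would establish $\theta_1\leq\val$. As $\theta_1$ is a bounded viscosity subsolution of \eqref{EDP}, Proposition~\ref{mono}~$(ii)$ gives, for each $x\in\overline{\Omega}$ and \emph{every} control $\al\in\mathcal A$,
\begin{equation*}
\theta_1(x)\leq e^{-\ld t}\,\theta_1\big(\traj(t)\big)+\int_0^t e^{-\ld s}\,L\big(\traj(s),\al(s)\big)\,ds\qquad\forall\,t\geq 0 .
\end{equation*}
Passing to the limit $t\to+\infty$ exactly as above gives $\theta_1(x)\leq\int_0^{\infty} e^{-\ld s}L(\traj(s),\al(s))\,ds$ for every $\al\in\mathcal A$; taking the infimum over $\al$ and using \eqref{eq:val} yields $\theta_1(x)\leq\val(x)$.

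The main obstacle is Proposition~\ref{mono} itself --- converting the infinitesimal viscosity inequalities on $\overline{\Omega}$ into these integral sub-/super-optimality principles along trajectories. On the supersolution side one must produce a \emph{single} trajectory realizing the inequality, which is a measurable-selection/closedness issue for which the convexity hypothesis \eqref{eq:conv} on the extended velocity set $\{(f(x,a),L(x,a)+r):a\in A,\ r\geq 0\}$ is used; on the subsolution side the inequality holds simultaneously for all controls and is easier. Once Proposition~\ref{mono} is available, the remaining steps here --- passing to the limit and taking the infimum --- are routine, relying only on the boundedness of $\theta_1,\theta_2,L$ and on $\ld>0$.
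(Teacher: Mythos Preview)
Your proposal is correct and follows essentially the same approach as the paper: both reduce the comparison to Proposition~\ref{mono} (applied with $\ell=L$), pass to the limit $t\to+\infty$ using boundedness of $\theta_1,\theta_2$ and $\lambda>0$, and then take the infimum over controls for the subsolution inequality. Your commentary on the role of $(H_\Omega)$ and of the convexity hypothesis~\eqref{eq:conv} in obtaining the measurable selection for the supersolution case is accurate and matches the paper's intent.
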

From the above theorem we deduce the following
\begin{corollary}
The value function $\val$ is the unique BUC viscosity solution on $\overline{\Omega }$  to \eqref{EDP}.
\end{corollary}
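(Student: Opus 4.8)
The plan is to deduce the corollary from the comparison principle \Cref{comp}. There are two things to establish: \emph{existence} of a BUC viscosity solution of \eqref{EDP} on $\bO$, for which the natural candidate is $\val$ itself, and \emph{uniqueness}, which is exactly what \Cref{comp} is designed to deliver. First I would record that $\val$ is BUC: by \eqref{eq:L} one has $0\le\val(x)\le 1$ for every $x\in\bO$, and, as recalled in \Cref{sec-prelim}, under $(H_\Omega)$ the function $\val$ is continuous on the compact set $\bO$, hence uniformly continuous. That $\val$ is a viscosity solution of \eqref{EDP} on $\bO$ is the classical consequence of the dynamic programming principle for the problem \eqref{eq:val}: freezing the control on a short time interval $[0,t]$ and appending the optimal tail yields the supersolution inequality of \Cref{visc}~$(a)$ at each $x\in\bO$, while using a near-optimal control on $[0,t]$ followed by the optimal tail yields the subsolution inequality of \Cref{visc}~$(b)$; the invariance hypothesis $(H_\Omega)$ guarantees that all competing trajectories remain in $\bO$, so these inequalities hold in the boundary-adapted sense of \Cref{visc} on all of $\bO$ (not merely on $\Omega$), and the convexity assumption \eqref{eq:conv} supplies the compactness needed to pass to the Hamiltonian $H$. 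Thus $\val$ is a BUC viscosity solution of \eqref{EDP}.

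For uniqueness, let $\theta\in\mathrm{BUC}(\bO)$ be any viscosity solution of \eqref{EDP} on $\bO$. Then $\theta$ is in particular a viscosity subsolution, so \Cref{comp} applied with $\theta_1:=\theta$ gives $\theta(x)\le\val(x)$ for all $x\in\bO$; and $\theta$ is also a viscosity supersolution, so \Cref{comp} applied with $\theta_2:=\theta$ gives $\val(x)\le\theta(x)$ for all $x\in\bO$. Combining the two inequalities yields $\theta\equiv\val$ on $\bO$, which is the assertion of the corollary.

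The corollary's own proof is therefore only a two-line application of \Cref{comp}, and the genuine effort is concentrated in \Cref{comp}, which is taken as given here. Within the present argument, the single point that deserves attention is the verification that $\val$ satisfies both viscosity inequalities of \Cref{visc} \emph{up to the boundary} of $\Omega$, and this is precisely where the invariance hypothesis $(H_\Omega)$ enters: because admissible trajectories never leave $\bO$, no state-constraint boundary correction is needed, and the super/sub-differentials $\partial^-_\Omega$, $\partial^+_\Omega$ relative to $\bO$ are exactly the objects for which the dynamic programming inequalities can be read off.
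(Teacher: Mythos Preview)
Your proposal is correct and follows essentially the same approach as the paper: the corollary is stated there as an immediate consequence of \Cref{comp}, and your uniqueness argument (apply \Cref{comp} with $\theta_1=\theta_2=\theta$ to sandwich any BUC solution $\theta$ between $\val$ and itself) is exactly the intended one-line deduction. Your additional discussion of why $\val$ is itself a BUC viscosity solution on $\bO$ is consistent with what the paper does elsewhere---in the appendix this is obtained via the monotonicity characterization of \Cref{mono} rather than a direct DPP computation, but the content is the same.
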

Theorem \ref{comp} is related from the following monotonicity properties of the viscosity sub and supersolutions to the following Hamilton Jacobi equation
\begin{equation} \label{EDPgen}
\left\{ \begin{array}{l}
 \lambda V (x) + H \big(x, -\nabla V (x) \big) = 0\qquad(x \in \overline{\Omega})\\  \mbox{where }
 H(x, p) :=\max_{a \in A} \big\{ \big\langle f(x,a),p \big\rangle  -\ell(x,a) \big\}\qquad\forall\, (x,p) \in  \rn \times  \rn.
\end{array} \right.
\end{equation} with $\lambda \geq 0$ possibly equal to zero. Of course \eqref{EDPgen}   reduces to \eqref{EDP} when $\ell \equiv L$.
We consider that there exists some $k >0$ satisfying the following condition
\begin{equation}\label {eq:ell}
\left\{ \begin{array}{l}
|\ell (x,a)-\ell (y,a)|\leqslant  k|x-y|  \qquad\forall x,y\in \rn\,,\;\forall a\in A \\
\{ f(x,a), \ell (x,a) +r ), \, a \in A, \, r \geq 0 \,\} \mbox{ is a convex set.}
\end{array} \right.
\end{equation}

\begin{proposition} \label{mono}  Assume that ($H _ \Omega $), \eqref{eq:f} and \eqref{eq:ell} hold true.
Consider $ \theta  : \overline{\Omega } \mapsto \R$ a BUC function. Then

i) $\theta$  is a viscosity supersolution on $\overline{\Omega }$  to \eqref{EDPgen} if and only if
\begin{equation} \label{7.1}
\forall x \in  \overline{\Omega }, \exists  \alpha \in {\cal A} , \,  \forall t \geq 0, \,
\theta (x)  \geq e ^{- \lambda t } \theta  (\traj (t) ) + \int _0 ^t  e ^{- \lambda s}\ell \big(\traj (s), \al (s)\big)ds
\end{equation}

ii) $\theta$  is a viscosity subsolution on $\overline{\Omega }$  to \eqref{EDPgen} if and only if
\begin{equation} \label{7.2}
\forall x \in  \overline{\Omega }, \forall  \alpha \in {\cal A} , \,  \forall t \geq 0, \,
\theta (x)  \leq e ^{- \lambda t } \theta  (\traj (t) ) + \int _0 ^t  e ^{- \lambda s}\ell\big(\traj (s), \al (s)\big)ds
\end{equation}
\end{proposition}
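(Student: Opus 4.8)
The plan is to prove the two equivalences by the classical route relating viscosity solutions to the (super/sub)-optimality principles, the only real point being that everything happens on the \emph{closed} invariant set $\overline\Omega$. Assumption $(H_\Omega)$ is the crux: it guarantees that any trajectory $\traj(\cdot)$ issued from a point of $\overline\Omega$ stays in $\overline\Omega$, so that along such a trajectory a test function touching $\theta$ from below (resp.\ above) may indeed be compared with $\theta$, and so that the one-sided differentials $\partial^-_\Omega\theta$, $\partial^+_\Omega\theta$ of Definition~\ref{visc} are exactly the objects that the computations below produce. Thus no separate discussion of $\partial\Omega$ is needed.

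I would first dispose of the two easy implications, $\eqref{7.1}\Rightarrow$ ``supersolution'' and $\eqref{7.2}\Rightarrow$ ``subsolution''. For the former, let $\varphi\in C^1$ with $\theta-\varphi$ having a minimum relative to $\overline\Omega$ at $x_0$ and $\theta(x_0)=\varphi(x_0)$; if the supersolution inequality failed at $x_0$ then, because $H(x_0,\cdot)$ is a maximum over $a$, one gets a \emph{uniform} $\delta>0$ with $-\lambda\varphi(x_0)+\langle f(x_0,a),\nabla\varphi(x_0)\rangle+\ell(x_0,a)\ge\delta$ for all $a\in A$; by continuity of $f,\ell,\nabla\varphi$ and of trajectories this persists with $\delta/2$ along any $\traj$ on a short interval $[0,t]$, so the absolutely continuous map $s\mapsto e^{-\lambda s}\varphi(\traj(s))+\int_0^se^{-\lambda r}\ell(\traj(r),\alpha(r))\,dr$ is strictly increasing for \emph{every} $\alpha\in\mathcal{A}$, and using $\theta\ge\varphi$ this contradicts $\eqref{7.1}$. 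The implication $\eqref{7.2}\Rightarrow$ ``subsolution'' is the mirror image, with a test function from above and a \emph{constant} control $a\in A$, taking the supremum over $a$ at the end to recover $H$; both arguments just amount to a Taylor expansion of $e^{-\lambda t}\varphi(\traj(t))$, division by $t$, and letting $t\to0^+$.

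For ``subsolution'' $\Rightarrow\eqref{7.2}$ one must produce the inequality for \emph{all} $\alpha$, and the standard device is regularization by sup-convolution $\theta^\varepsilon(x):=\sup_y\{\theta(y)-|x-y|^2/(2\varepsilon)\}$. Since $\theta$ is BUC, $\theta^\varepsilon\downarrow\theta$ uniformly; $\theta^\varepsilon$ is semiconvex, hence locally Lipschitz and differentiable a.e., and at its points of differentiability $\nabla\theta^\varepsilon(x)$ lies in the superdifferential of $\theta$ at the maximizer $y_x$, with $|x-y_x|=O(\sqrt\varepsilon)$, so, $f,\ell$ being Lipschitz, $\theta^\varepsilon$ satisfies the subsolution inequality up to an error $o(1)$ as $\varepsilon\to0$. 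Fixing $\alpha\in\mathcal{A}$ and $x\in\overline\Omega$, the map $s\mapsto\theta^\varepsilon(\traj(s))$ is Lipschitz with a.e.\ derivative $\langle\nabla\theta^\varepsilon(\traj(s)),f(\traj(s),\alpha(s))\rangle$ (here a little care is needed, but it is supplied by the semiconvexity of $\theta^\varepsilon$); plugging the perturbed subsolution inequality into $\tfrac{d}{ds}\big[e^{-\lambda s}\theta^\varepsilon(\traj(s))+\int_0^se^{-\lambda r}\ell(\traj(r),\alpha(r))\,dr\big]$ and integrating on $[0,t]$ gives $\eqref{7.2}$ for $\theta^\varepsilon$, and letting $\varepsilon\to0$ gives it for $\theta$.

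The remaining implication, ``supersolution'' $\Rightarrow\eqref{7.1}$, is the main obstacle, because it requires \emph{constructing} a single admissible control $\alpha$ realizing the inequality for all $t\ge0$. The plan is a Filippov-type discretization: on a time grid of mesh $\tau$, using the (regularized) supersolution, select at the $k$-th node, by a measurable selection, a control value $a_k\in A$ nearly attaining the maximum defining $H$ at the current state, concatenate these into $\alpha_\tau\in\mathcal{A}$, and bound the one-step error in terms of the modulus of continuity of $\theta$ and the Lipschitz constants in $\eqref{eq:f}$ and $\eqref{eq:ell}$. The trajectories $X^x_{\alpha_\tau}$ are equi-Lipschitz by the bound $|f|\le C$ in $\eqref{eq:f}$, so Ascoli gives a uniform limit $Y$ along a subsequence; the key point, and the very reason for imposing the convexity of $\{(f(x,a),\ell(x,a)+r):a\in A,\ r\ge0\}$ in $\eqref{eq:ell}$, is that $Y$ is then itself a solution of $\eqref{eq:traj}$ for some $\alpha\in\mathcal{A}$ \emph{with no relaxation gap} in the running cost, so that passing to the limit in the discrete inequalities yields $\eqref{7.1}$ on $[0,T]$; a diagonal/iteration argument over $T\to\infty$ upgrades this to all $t\ge0$. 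The delicate steps are precisely the measurable selection, the bookkeeping of the discretization error, and the use of convexity to identify the limit as a genuine admissible trajectory.
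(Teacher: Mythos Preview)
Your outline is a correct and well-known route to the result, but it is \emph{not} the one the paper follows. The paper works entirely through set-valued analysis: it first shows (Lemma~\ref{a}) that \eqref{7.1} and \eqref{7.2} are equivalent, respectively, to the viability and invariance of $\Epi(\theta)\cap(\overline\Omega\times\R)$ and $\Hypo(\theta)\cap(\overline\Omega\times\R)$ for an augmented differential inclusion; it then shows (Lemma~\ref{b}) that $p\in\partial^\pm_\Omega\theta(x)$ is exactly the statement that $(\mp p,\pm1)$ lies in the negative polar of the contingent cone to the hypo/epigraph; finally it invokes the Viability Theorem and the Invariance Theorem from \cite{AV,FP2} to close the loop. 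In particular, the existence of a control in \eqref{7.1} is produced by the Viability Theorem together with Filippov's selection lemma, not by an explicit time-discretization, and the boundary case (horizontal normals $(p,0)$ to the epigraph, corresponding to points where $\partial^-_\Omega\theta$ is empty) is handled via a Rockafellar approximation lemma rather than by regularization. What your approach buys is self-containment within the PDE toolbox (sup/inf convolutions, test functions, Ascoli); what the paper's approach buys is that all the hard analysis---existence of the control, treatment of points where the subdifferential is empty, closure under the convexity hypothesis \eqref{eq:ell}---is already packaged in the viability/invariance theorems, so no regularization or discretization error estimates are needed.

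One cautionary remark on your sketch of ``supersolution $\Rightarrow$ \eqref{7.1}'': the step ``select at the $k$-th node a control value $a_k$ nearly attaining the maximum defining $H$'' presupposes a $p\in\partial^-_\Omega\theta$ at that node, but for a merely continuous $\theta$ this set may be empty. Inf-convolution makes $\theta_\varepsilon$ semi\emph{concave}, so it is $\partial^+\theta_\varepsilon$ that is everywhere nonempty, whereas you need an element of $\partial^-\theta$ to invoke the supersolution inequality. The standard fix is to use that any $p\in\partial^+\theta_\varepsilon(x)$ equals $(x-y_x)/\varepsilon$ for some minimizer $y_x$, and that this same vector lies in $\partial^-_\Omega\theta(y_x)$; this is exactly the kind of bookkeeping the paper avoids by appealing to the Viability Theorem, whose tangential condition it checks directly---including the degenerate normals---without any regularization.
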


\begin{proof}[Proof of Theorem \ref{comp}.]
  Suppose that $ \theta  _1$ and $ \theta _2 $ are as in Theorem \ref{comp}.

By Proposition \ref{mono} with $ \ell \equiv L$, we know that $ \theta _2$ satisfies \eqref{7.1}. By passing to the limit as $ t \to + \infty $ on \eqref{7.1} we obtain
\begin{equation*}
\theta _2 (x) \geq \int _0 ^ \infty  e ^{- \lambda s}L\big(\traj (s), \al (s)\big)ds.
\end{equation*}
The right hand side of the above inequality being clearly minorized by $\val (x) $ we obtain that $\val(x) \leq \theta _2 (x) $.

By Proposition \ref{mono} with $ \ell \equiv L$, we know that $ \theta _1$ satisfies \eqref{7.2}. By passing to the limit as $ t \to + \infty $ on \eqref{7.1} we obtain
\begin{equation*} \forall \al \in  {\cal A} , \,
\theta _1 (x) \leq \int _0 ^ \infty  e ^{- \lambda s}L\big(\traj (s), \al (s)\big)ds.
\end{equation*}
Hence passing to the infimum over $\al \in  {\cal A} $ this yields
\begin{equation*}
\theta _1 (x) \leq  \inf _{ \al \in  {\cal A} } \int _0 ^ \infty  e ^{- \lambda s}L\big(\traj (s), \al (s)\big)ds = \val (x).
\end{equation*} The proof of the theorem is complete. \end{proof}

Proposition \ref{mono} is based on several lemmas we state and prove now.

The first lemma concerns the relation between monotonicity properties of a function $ \theta$ and the viability and invariance of its epigraph and hypograph. It has been obtained in a nonconstrained case in \cite{AF1, FP1, CPQ1}.

\begin{lemma} \label{a}
Consider $ \theta  : \overline{\Omega } \mapsto \R$ a BUC function. Then

i) $\theta $ satisfies \eqref{7.1} if and only if $\Epi (\theta)  \cap ( \overline{\Omega } \times \R ) $ is viable for the following differential inclusion
\begin{eqnarray}\label{DI}
\left(\begin{array}{l}  x'(t) \\ y'(t) \end{array} \right) & \in &
\left\{\begin{array}{ll}  \left(\begin{array}{l}  f(x(t),a) \\  \lambda y(t) -\ell (x(t),a) -r \end{array} \right) &
a \in A, \, r \in [0,1]
 \end{array} \right\}
\end{eqnarray}
ii) $\theta $ satisfies \eqref{7.2} if and only if $\Hypo (\theta)  \cap ( \overline{\Omega } \times \R ) $ is invariant for the differential inclusion \eqref{DI}.
\end{lemma}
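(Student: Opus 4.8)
The plan is to reduce both equivalences to a single explicit computation: integrating the linear $y$-component of \eqref{DI}. First I would record the correspondence between solutions of \eqref{DI} and controls. Any solution $t\mapsto(x(t),y(t))$ of \eqref{DI} on $[0,\infty)$, with $x(0)=x_0$, can be written by a measurable selection argument (Filippov's lemma, using that $A$ is metric and $f,\ell$ continuous) with a single measurable control $\alpha\in\mathcal A$ and a measurable $r:[0,\infty)\to[0,1]$ so that $x'(t)=f(x(t),\alpha(t))$ and $y'(t)=\lambda y(t)-\ell(x(t),\alpha(t))-r(t)$ a.e.; by the Lipschitz bound \eqref{eq:f} the state equation has a unique solution, hence $x(\cdot)=X^{x_0}_{\alpha}(\cdot)$, which stays in $\overline{\Omega}$ by $(H_\Omega)$. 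Multiplying by $e^{-\lambda t}$ and integrating gives
\[
e^{-\lambda t}y(t)=y(0)-\int_0^t e^{-\lambda s}\big(\ell(X^{x_0}_{\alpha}(s),\alpha(s))+r(s)\big)\,ds,\qquad t\geq 0 .
\]
Conversely, given $x_0\in\overline{\Omega}$, $\alpha\in\mathcal A$, $y_0\in\R$, the choice $r\equiv 0$ with $y$ solving the associated linear ODE and $y(0)=y_0$ yields a bona fide solution of \eqref{DI} from $(x_0,y_0)$, for which the identity reads $e^{-\lambda t}y(t)=y_0-\int_0^t e^{-\lambda s}\ell(X^{x_0}_{\alpha}(s),\alpha(s))\,ds$. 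After this, everything is bookkeeping with the sign of $r$.

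For part i), I would argue as follows. If \eqref{7.1} holds, fix $(x_0,y_0)\in\Epi(\theta)\cap(\overline{\Omega}\times\R)$, take the $\alpha$ furnished by \eqref{7.1} at $x_0$ and the $r\equiv 0$ trajectory with $y(0)=y_0$; then $y_0\geq\theta(x_0)$ and \eqref{7.1} give $e^{-\lambda t}y(t)=y_0-\int_0^t e^{-\lambda s}\ell\,ds\geq\theta(x_0)-\int_0^t e^{-\lambda s}\ell\,ds\geq e^{-\lambda t}\theta(x(t))$, so the trajectory remains in $\Epi(\theta)\cap(\overline{\Omega}\times\R)$ and the set is viable. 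Conversely, if that set is viable, then for each $x_0\in\overline{\Omega}$ viability at $(x_0,\theta(x_0))$ gives a trajectory $(x(\cdot),y(\cdot))$ remaining in the set; decomposing it as above and using $r\geq 0$ and $y(t)\geq\theta(x(t))$ in the identity gives $e^{-\lambda t}\theta(x(t))\leq\theta(x_0)-\int_0^t e^{-\lambda s}\ell\,ds$, which is \eqref{7.1} for this $\alpha$.

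Part ii) would go through the same way with $\Epi$ replaced by $\Hypo$, inequalities reversed, and ``some trajectory'' replaced by ``every trajectory''. If \eqref{7.2} holds, then for \emph{every} trajectory from $(x_0,y_0)\in\Hypo(\theta)\cap(\overline{\Omega}\times\R)$, using $r\geq 0$, $y_0\leq\theta(x_0)$ and \eqref{7.2} in the identity gives $e^{-\lambda t}y(t)\leq\theta(x_0)-\int_0^t e^{-\lambda s}\ell\,ds\leq e^{-\lambda t}\theta(x(t))$, so the trajectory stays in $\Hypo(\theta)\cap(\overline{\Omega}\times\R)$, i.e.\ the set is invariant. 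Conversely, for arbitrary $x_0\in\overline{\Omega}$ and $\alpha\in\mathcal A$ the $r\equiv 0$ trajectory with $y(0)=\theta(x_0)$ starts in $\Hypo(\theta)\cap(\overline{\Omega}\times\R)$, hence stays there by invariance, so $\theta(x_0)-\int_0^t e^{-\lambda s}\ell\,ds=e^{-\lambda t}y(t)\leq e^{-\lambda t}\theta(x(t))$, which is \eqref{7.2}.

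I expect the only real technical care is the measurable selection that recovers a common control $\alpha$ in both components of a prescribed solution of \eqref{DI}; this is routine given the continuity of $f$ and $\ell$ and the convexity hypothesis in \eqref{eq:ell} (which makes \eqref{DI} a well-posed differential inclusion), and is precisely the point already treated in the unconstrained versions of \cite{AF1, FP1, CPQ1}. The state constraint $\overline{\Omega}$ does not add any difficulty: because $(H_\Omega)$ forces $X^{x_0}_{\alpha}(t)\in\overline{\Omega}$ for every admissible $\alpha$, viability of $\Epi(\theta)\cap(\overline{\Omega}\times\R)$ (resp.\ invariance of $\Hypo(\theta)\cap(\overline{\Omega}\times\R)$) is equivalent to viability of $\Epi(\theta)$ (resp.\ invariance of $\Hypo(\theta)$) on its own.
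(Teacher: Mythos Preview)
Your proposal is correct and follows essentially the same route as the paper: both arguments integrate the linear $y$-equation of \eqref{DI} to obtain the key identity $e^{-\lambda t}y(t)=y(0)-\int_0^t e^{-\lambda s}(\ell+r)\,ds$, invoke Filippov's measurable selection theorem to decompose a given solution of \eqref{DI} into a single control $\alpha$ and slack $r$, and use the choice $r\equiv 0$ for the converse implications. The only minor addition in your write-up is the explicit remark that $(H_\Omega)$ makes the intersection with $\overline{\Omega}\times\R$ automatic, which the paper leaves implicit.
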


\begin{proof}
Observe that thanks to \eqref{eq:ell} the right hand side of the differential inclusion has compact convex values, moreover \eqref{eq:f} and \eqref{eq:ell} imply that it is a Lipschitz set-valued map.
Let us prove the part i) of the Lemma.  Suppose that $\Epi (\theta)  \cap ( \overline{\Omega } \times \R ) $ is viable  for \eqref{DI}.  This means that  for any $ (x, y ) \in \Epi \theta $, there exists a solution $ (x(\cdot), y (\cdot) ) $ to \eqref{DI} such that $ (x(0), y(0)) =( x ,y  ) $ and $(x(t),y(t)) \in \Epi \theta $ for any $ t \geq 0$.   By the Filippov measurable selection theorem (cf e.g. Theorem 8.2.10 in \cite{aufr90}) , there exists $\al \in {\cal A}$ and a mesurable function $r: \R \mapsto [0,1] $ such that
\begin{equation*}
\theta (\traj (t) ) \leq y(t) = e^{ \lambda t } y -  e^{ \lambda t } \int _0 ^t  e^{ -\lambda s} [ \ell \big(\traj (s), \al (s)\big)+r(s) ]ds , \forall t \geq 0.
\end{equation*}
Dividing the above inequality by $ e^{ \lambda t } $  and taking $ y = \theta (x) $ we obtained
\begin{eqnarray*}
\theta (y)  \geq e^{- \lambda t }  \theta (\traj (t) )  + e^{- \lambda t }  \int _0 ^t  e^{ -\lambda s} [ \ell \big(\traj (s), \al (s)\big)+r(s) ]ds
\\ \geq e^{- \lambda t }  \theta (\traj (t) )  + e^{- \lambda t }  \int _0 ^t  e^{ -\lambda s}  \ell \big(\traj (s), \al (s)\big) ds
\end{eqnarray*}
which is
 the wished monotonicity property \eqref{7.1} .

Conversely  suppose that $ \theta $ satisfies \eqref{7.1}. Fix $ x \in \overline{\Omega} $ and consider $ \al \in {\cal A} $ such that \eqref{7.1} holds true.
Let us define for any $ r \geq 0 $, $x(t) := \traj (t)$ and $y(t):= e^{ \lambda t }  \theta (y)-  e^{ \lambda t } \int _0 ^t  e^{ -\lambda s}  \ell \big(\traj (s), \al (s)\big)ds$. One can easily observe that $(x(\cdot), y(\cdot))$ is a solution of the differential inclusion \eqref{DI}. Moreover inequality \eqref{7.1} is clearly equivalent to $ \theta (x(t)) \leq y (t) $ for any $ t \geq 0$. This means that $\Epi (\theta)  \cap ( \overline{\Omega } \times \R ) $ is viable for \eqref{DI} the wished conclusion.

The proof of ii) goes in the same line analyzing what means that  $\Hypo (\theta)  \cap ( \overline{\Omega } \times \R ) $ is invariant for  \eqref{DI}. So we omit this part. \end{proof}

The following Lemma concerns the links between sub and super differential and the tangent cone to the epigraphs and hypographs. Without constraints the result has been obtained in \cite{FR} (cf also \cite{FP2}).

\begin{lemma} \label{b}   Consider $ \theta  : \overline{\Omega } \mapsto \R$ a BUC function. Then for all $x \ in \overline{\Omega} $

\begin{equation*}
p \in \partial _\Omega ^+ \theta (x) \Longleftrightarrow (-p, 1) \in [ T _ {\Hypo (\theta)  \cap ( \overline{\Omega } \times \R ) }  (x, \theta (x)) ] ^-
\end{equation*}
and
\begin{equation*}
p \in \partial _\Omega ^-  \theta (x) \Longleftrightarrow (p, -1) \in [ T _ {\Epi  (\theta)  \cap ( \overline{\Omega } \times \R ) }  (x, \theta (x)) ] ^- .
\end{equation*}
\end{lemma}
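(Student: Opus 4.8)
To prove Lemma~\ref{b}, the plan is to establish only the second equivalence, the one relating $\partial^-_\Omega\theta$ to $\Epi\theta$; the first equivalence (for $\partial^+_\Omega\theta$ and $\Hypo\theta$) then follows by applying it to $-\theta$. Indeed, set $\psi:=-\theta$ and $\Phi(x,y):=(x,-y)$; then $\Phi$ is a self-adjoint linear involution that fixes $\overline{\Omega}\times\R$ and carries $\Epi\psi$ onto $\Hypo\theta$, hence it carries $\Epi\psi\cap(\overline{\Omega}\times\R)$ onto $\Hypo\theta\cap(\overline{\Omega}\times\R)$; consequently $T_{\Epi\psi\cap(\overline{\Omega}\times\R)}(x,\psi(x))=\Phi\big(T_{\Hypo\theta\cap(\overline{\Omega}\times\R)}(x,\theta(x))\big)$ and the same relation passes to the negative polar cones. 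Since in addition $\partial^-_\Omega\psi(x)=-\partial^+_\Omega\theta(x)$ and $\Phi(p,1)=(p,-1)$, the epigraph statement written for $\psi$ is exactly the hypograph statement for $\theta$. From now on I fix $x\in\overline{\Omega}$, write $\zeta:=(x,\theta(x))$ and $K:=\Epi\theta\cap(\overline{\Omega}\times\R)$ (so $\zeta\in K$), and prove that $p\in\partial^-_\Omega\theta(x)$ if and only if $(p,-1)\in[T_K(\zeta)]^-$. This is the constrained counterpart of the classical unconstrained fact in \cite{FR}, and the argument is essentially a careful translation between the two sets of definitions.

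For the implication ``$\Rightarrow$'', I would take $p\in\partial^-_\Omega\theta(x)$ and an arbitrary $(u,v)\in T_K(\zeta)$, with witnessing sequences $h_k\downarrow 0$ and $(u_k,v_k)\to(u,v)$ such that $\zeta+h_k(u_k,v_k)\in K$; this means $x+h_ku_k\in\overline{\Omega}$ and $\theta(x+h_ku_k)\leq\theta(x)+h_kv_k$. Fixing $\e>0$, the definition of $\partial^-_\Omega\theta(x)$ gives, for all $k$ large (and trivially if $u_k=0$), $\theta(x+h_ku_k)-\theta(x)\geq h_k\langle p,u_k\rangle-\e h_k|u_k|$; combining this with the previous inequality and dividing by $h_k>0$ yields $v_k\geq\langle p,u_k\rangle-\e|u_k|$, so letting first $k\to\infty$ and then $\e\downarrow 0$ gives $v\geq\langle p,u\rangle$, i.e. $\langle(p,-1),(u,v)\rangle\leq 0$. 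As $(u,v)\in T_K(\zeta)$ was arbitrary, this shows $(p,-1)\in[T_K(\zeta)]^-$.

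For the converse I would argue by contraposition: assume $p\notin\partial^-_\Omega\theta(x)$, so there are points $y_k\in\overline{\Omega}$ with $y_k\to x$, $y_k\neq x$, and $\frac{\theta(y_k)-\theta(x)-\langle p,y_k-x\rangle}{|y_k-x|}\to-c$ for some $c\in(0,+\infty]$, and I must produce an element of $T_K(\zeta)$ against which $(p,-1)$ has positive inner product. The right move is to normalize in $\R^{n+1}$ rather than in $\R^n$: set $\delta_k:=|(y_k,\theta(y_k))-\zeta|\downarrow 0$ and $(u_k,v_k):=\delta_k^{-1}\big((y_k,\theta(y_k))-\zeta\big)$, which are unit vectors, so along a subsequence $(u_k,v_k)\to(u,v)$ with $|(u,v)|=1$; since $\zeta+\delta_k(u_k,v_k)=(y_k,\theta(y_k))\in K$, we get $(u,v)\in T_K(\zeta)$. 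Now $\langle(p,-1),(u_k,v_k)\rangle=-\delta_k^{-1}\big(\theta(y_k)-\theta(x)-\langle p,y_k-x\rangle\big)$, and it remains to show this converges to something $>0$. If $c<+\infty$, the defining relation forces $\theta(y_k)-\theta(x)=O(|y_k-x|)$, whence $\delta_k/|y_k-x|$ is bounded and (being $\geq 1$) bounded away from $0$, so along a further subsequence it converges to some $\ell\in[1,+\infty)$ and the inner product tends to $c/\ell>0$; if $c=+\infty$, then $|\theta(y_k)-\theta(x)|/|y_k-x|\to\infty$, so $|y_k-x|/\delta_k\to 0$, $(u,v)=(0,-1)$, and the inner product equals $1>0$. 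Either way $(u,v)\in T_K(\zeta)$ violates $(p,-1)\in[T_K(\zeta)]^-$, proving the contrapositive. The step I expect to be the \emph{main obstacle} is precisely this last one: since $\theta$ is only BUC, not Lipschitz, the contingent cone $T_K(\zeta)$ may contain vertical directions, so one cannot normalize by $|y_k-x|$ alone and must work in $\R^{n+1}$ while isolating the $c=+\infty$ regime; by contrast, keeping the auxiliary sequences inside $\overline{\Omega}$ requires no extra care, since this constraint is already built into the definitions of $\partial^\pm_\Omega\theta$ and of the contingent cone.
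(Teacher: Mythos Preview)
Your argument is correct. The forward implication is handled in the same spirit as the paper, and your reduction of the hypograph statement to the epigraph statement via $\psi=-\theta$ and the reflection $\Phi(x,y)=(x,-y)$ is clean and valid.

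The converse direction, however, is where you genuinely diverge from the paper. You argue by contraposition, normalize the increment $(y_k,\theta(y_k))-(x,\theta(x))$ in $\R^{n+1}$, and then split into the cases $c<+\infty$ and $c=+\infty$ to control the possible vertical direction. This works, but the paper's route is shorter and sidesteps precisely the obstacle you single out. The paper normalizes only in $\R^n$: given the sequence $y_k$ and any real $c$ strictly below the relevant $\limsup$ (respectively, above the $\liminf$), it sets $h_k=|y_k-x|$, $u_k=(y_k-x)/h_k\to u$, and observes directly that $(u,\langle p,u\rangle+c)$ lies in the contingent cone; the polar inequality then forces $c\leq 0$, hence $\limsup\leq 0$. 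Because $c$ is an arbitrary finite number below the $\limsup$, no case analysis on whether the $\limsup$ is finite is needed, and no compactness in $\R^{n+1}$ enters. In short, your approach buys robustness (it would work verbatim for a coarser notion of tangent cone that only sees the graph), while the paper's approach buys economy by building the tangent vector with a free scalar parameter rather than extracting it from the data.
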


\begin{proof} We only prove the first equivalence, the second one being similar.  Take $x \ in \overline{\Omega} $  and  $ p \in \partial _\Omega ^+ \theta (x) $. Fix $ (u,v) \in  T _ {\Hypo (\theta)  \cap ( \overline{\Omega } \times \R )}(x, \theta (x)) $.  Since $ \theta $ is continuous, we have clearly $ v \leq 0$.  We claim that
\begin{equation*} - \<p,u> +v \leq 0. \end{equation*} Observe that our claim holds true if $u=0$. From now on we suppose $u \neq 0$.
By the very definition of the fact that  $ (u,v) \in  T _ {\Hypo (\theta)  \cap ( \overline{\Omega } \times \R )}(x, \theta (x)) $ there exists sequences $ h_k \to 0^+$ , $(u_k,v_k) \to (u,v)$ such that for any $k \geq 1$,
\begin{equation*}
x + h_ku_k \in \overline{ \Omega } , \; \theta (x+ h_k u_k) \geq \theta (x) + h_k v_k .
\end{equation*}
Since $ p \in \partial _\Omega ^+ \theta (x) $ we have
\begin{eqnarray*}
0 \geq \limsup _{y \to x, y \in  \overline{\Omega }} \frac{\theta(y)-\theta(x) -\<p, y-x> }{|y -x|} \\
\geq \limsup _{k }\frac{\theta(  x+ h_k u_k )-\theta(x) -\<p, h_k u_k> }{h_k|u_k|} \\
\geq \limsup _{k } \frac{\theta( x)+ h_k v_k- \theta(x) -\<p, h_k u_k> }{h_k|u_k|} \\
= \frac{v - \<p,u>}{|u| }
\end{eqnarray*} which proves our claim.

Conversely take $(-p, 1) \in [ T _ {\Hypo (\theta)  \cap ( \overline{\Omega } \times \R ) }  (x, \theta (x)) ] ^-$.
Fix a sequence $y_k \in \overline{ \Omega }  $ and $c \in \R$ such that  $ y_k \to x$ and
\begin{equation*}
\lim _k \frac{\theta(y_k)-\theta(x) -\<p, y_k-x> }{|y_k -x|} =
\limsup _{y \to x, y \in  \overline{\Omega }} \frac{\theta(y)-\theta(x) -\<p, y-x> }{|y -x|} >c
\end{equation*}
Up to a subsequence $ u_k = \frac{y_k -x}{|y_k -x|} $ converges to some $u$. By abuse of  notations we similarly denote the subsequence.
Note $h_k := |y_k -x| $.

For any $k$ large enough
\begin{eqnarray*}
\frac{\theta(y_k)-\theta(x) -\<p, y_k-x> }{|y_k -x|} >c.
\end{eqnarray*}
Hence
\begin{eqnarray*}
\theta(x+ h_k u_k)-\theta(x) -h_k\<p, u_k > \; \geq c h_k.
\end{eqnarray*}
 Hence $(x+h_k u_ k, \theta (x) + h_k [\<p,u_k> +c ] ) \in \Hypo (\theta)  \cap ( \overline{\Omega } \times \R ) $  for any $ k \geq 1$. Thus $ (u, \<p,u> +c) \in T _ {\Hypo (\theta)  \cap ( \overline{\Omega } \times \R ) }  (x, \theta (x))$ and consequently
\begin{equation*}
\<-p,u> + \<p,u> + c \leq 0.
\end{equation*}  So $ c \leq 0$ . Since this is valid for every $c$ such that $
\limsup _{y \to x, y \in  \overline{\Omega }} \frac{\theta(y)-\theta(x) -\<p, y-x> }{|y -x|} >c$ we obtain that $ p \in \partial _\Omega ^+ \theta (x) $. The proof is complete.
\end{proof}

\begin{proof}[Proof of Proposition \ref{mono}.]
Fix $ \theta $ a BUC function on $ \overline{ \Omega } $.

\underline{Proof of part i)} Suppose that  $ \theta $ is a supersolution on $ \overline{ \Omega } $. We wish to obtain that $ \theta$ satisfies \eqref{7.1}.
In view of Lemma \ref{a}, it is enough to show that  $ \Epi \theta \cap (\overline{ \Omega } \times \R) $  is viable for \eqref{DI}. For doing this, due to Viability Theorem ( Th. 3.2.4 in \cite{AV}), we have to check that
\begin{eqnarray}
\forall x \in \overline{ \Omega }, \; \forall (p,q) \in [ T _ {\Epi  (\theta)  \cap ( \overline{\Omega } \times \R ) }  (x, \theta (x)) ] ^- , \, \exists a \in A, \, \exists r \in [0,1] ,\\ \label{viab}
\<f(x,a), p> + q ( \lambda \theta(x)  - \ell (x,a) - r ) \leq 0.
\end{eqnarray}
Fix $x \in \overline{ \Omega } $ and $ (p,q) \in [ T _ {\Epi  (\theta)  \cap ( \overline{\Omega } \times \R ) }  (x, \theta (x)) ] ^- $.
First observe that  $q \leq 0$.
Indeed
since
$ \{x\} \times [\theta (x) , + \infty ) \subset \Epi  (\theta)  \cap ( \overline{\Omega } \times \R ) $ then
$ \R ^n \times [0, + \infty ) = T _{ \{x\} \times [\theta (x) , + \infty ) } (x , \theta (x)) \subset T _ {\Epi  (\theta)  \cap ( \overline{\Omega } \times \R ) }  (x, \theta (x))$.  So $[T _ {\Epi  (\theta)  \cap ( \overline{\Omega } \times \R ) }  (x, \theta (x))]^- \subset [ \R ^n \times [0, + \infty )] ^- = \{0\} \times  (- \infty, 0]$.

Consider first the case where $q \neq 0$. Then $ (\frac{p}{|q|} ,-1) \in [ T _ {\Epi  (\theta)  \cap ( \overline{\Omega } \times \R ) }  (x, \theta (x)) ] ^- $ and by Lemma \ref{b} we obtain $ \frac{p}{|q|}  \in \partial ^- _ \Omega \theta (x)$.

Because $ \theta $ is a supersolution on  $\overline{\Omega } $, we obtain
\begin{equation*}
\lambda \theta (x) + \max _{a \in A} \{ - \<f(x,a), \frac{p}{|q|} > - \ell (x,a) \} \geq 0.
\end{equation*} Hence there exists some $a \in A$ such that
\begin{equation*}
\<f(x,a) ,p > + q (\lambda \theta (x) - \ell (x,a) ) \leq 0 ,
\end{equation*}  which is the wished relation \eqref{viab} (with $r=0$).

Consider now the case $q=0$. The equation \eqref{viab}  being obvious if $p=0$, we suppose $ p \neq 0$. Observe that $ (p,0) \in T_{\Epi \bar{ \theta} } (x, \bar{\theta} (x))$ where
\begin{eqnarray*}
\bar{\theta} (y) & =& \left\{ \begin{array}{lll} \theta(y) & \mbox{if} & y \in \overline{\Omega }  \\
\| \theta \| _ \infty +1 & \mbox{else} & \end{array} \right.
\end{eqnarray*} is a bounded lower semicontinuous function.

Using a version (Lemma 3.4 in \cite{PQ}) of a result due to Rockafellar \cite{Rock}, we know that there exists converging sequences $x_k \to x$, $(p_k, q_k) \in [ T _ {\Epi  (\bar{\theta})   }  (x_k, \bar{\theta }(x_k)) ] ^- $  such that
\begin{equation*}
q_k <0 , \, (p_k,q_k) \to (p,0) , \; \bar{\theta } (x_k) \to \bar{\theta }(x) .
\end{equation*}

Observe that if $y \notin \overline{\Omega }$ we have $ T _ {\Epi  (\bar{\theta})   }  (y, \bar{\theta }(y)) = \R ^n \times [0, + \infty)$. So
$[T _ {\Epi  (\bar{\theta})   }  (y, \bar{\theta }(y))]^- = \{0\} \times (-\infty , 0]$. Consequently since we have supposed $p \neq 0$ we deduce that for $k$ large enough $p_k \neq 0$, so $x_k \in \overline{\Omega }$  and $\bar {\theta} (x_k) = \theta (x_k)$. Hence

\begin{equation*}
T _ {\Epi  (\bar{\theta})   }  (x_k, \bar{\theta }(x_k)) ] ^- =T _ {\Epi  (\theta)  \cap (\overline{\Omega } \times \R  }  (x_k, \theta (x_k)) ] ^-
\end{equation*}
By Lemma \ref{b}, we deduce that $ \frac{p_k}{|q_k|} \in  \partial ^ - _ \Omega \theta (x_k)$. The function $\theta$ being a supersolution, we obtain
\begin{equation*}
\lambda \theta (x_k) + \max _{a \in A} \{ - \<f(x_k,a), \frac{p_k}{|q_k|} > - \ell (x_k,a) \} \geq 0.
\end{equation*} Multiplying by $q_k$ and letting $k$ tend to $ \infty$, this yields
\begin{equation*}
\max _{a \in A} \< -f(x,a),p > =0.
\end{equation*}
Hence we have obtained \eqref{viab} for $r=0$ and $q=0$.

Conversely assume that $\theta$ is a BUC function satisfying \eqref{7.1}.   Take  $ x \in \overline{\Omega }  $ and $ p \in  \partial _ - ^ \Omega \theta (x)$. So by Lemma \ref{b}
$(p, -1) \in [ T _ {\Epi  (\theta)  \cap ( \overline{\Omega } \times \R ) }  (x, \theta (x)) ] ^-$. In view of Lemma \ref{a} and of the viability theorem
( Th. 3.2.4 in \cite{AV}), there exists $a \in A$ and $r \in [0,1] $ such that \begin{equation*}
\<f(x,a), p> + (-1) ( \lambda \theta(x)  -\ell (x,a) - r ) \leq 0.\end{equation*}
Thus
\begin{equation*}
\lambda \theta (x) + H(x, -p) \geq r \geq 0,
\end{equation*} and consequently $\theta$ is a supersolution on $\overline{\Omega }$.

\underline{Proof of part ii)} This part being very similar to part i) we only sketch its proof.
 Suppose that  $ \theta $ is a subsolution on $ \overline{ \Omega } $. We wish to obtain that $ \theta$ satisfies \eqref{7.2}.
In view of Lemma \ref{a}, it is enough to show that  $ \Hypo \theta \cap (\overline{ \Omega } \times \R) $  is invariant for \eqref{DI}. For doing this, due to Invariance Theorem ( Th.2 in \cite{FP2} or cf \cite{AV}), we have to check that
\begin{eqnarray}
\forall x \in \overline{ \Omega }, \; \forall (p,q) \in [ T _ {\Hypo  (\theta)  \cap ( \overline{\Omega } \times \R ) }  (x, \theta (x)) ] ^- , \, \forall a \in A, \, \forall r \in [0,1] ,\\ \label{inv}
\<f(x,a), p> + q ( \lambda \theta(x)  - \ell (x,a) - r ) \leq 0.
\end{eqnarray}
Fix $x \in \overline{ \Omega } $ and $ (p,q) \in [ T _ {\Hypo  (\theta)  \cap ( \overline{\Omega } \times \R ) }  (x, \theta (x)) ] ^- $.
First observe that  $q \geq 0$. Then as in part i) we prove directly than $ -p / q \in \partial ^+ _ \Omega (x) $ when $q \neq 0$ and  we deduce \eqref{inv}. For $q =0$ we use the result of Rockafellar and the end of the proof proceeds exactly as in part i).
\end{proof}

\begin{remarks}
\begin{enumerate}
\item As already noticed, the classical notion of viscosity with constraints requires that the function $ \theta$ is a supersolution on $ \overline{ \Omega } $ and a subsolution in $ \Omega$. One can deduce from the above proofs that if $\theta$ is a subsolution on $ \overline{ \Omega } $  then necessarily $  \overline{ \Omega } $ is invariant by the control system \eqref{eq:traj}. So in the context of assumption ($H _ \Omega$) we have demonstrate that the notion of viscosity solution of Definition \ref{visc} are appropriate for comparison and uniqueness result without need of interior cone condition classically assumed \cite{BCD}.

\item In Theorem \ref{comp}, we do not need the invariance assumption ($H _ \Omega$) for proving $ \val (x) \leq \theta _2 (x) $, only any assumption ensuring  that $ \val $ is BUC is enough \cite{Soner1}.
\end{enumerate}

\end{remarks}

{\bf Acknowledgments:} This research was funded, in part, by l’Agence Nationale de la Recherche (ANR), project ANR-22-CE40-0010. P. Cannarsa and C. Mendico were partially supported by Istituto Nazionale di Alta Matematica, INdAM GNAMPA project 2022 and INdAM-GNAMPA project 2023, and by the MIUR Excellence Project awarded to the Department of Mathematics, University of Rome Tor Vergata, CUP E83C23000330006.

\bibliographystyle{abbrv}

\end{document}